\begin{document}

\setcounter{page}{95}

\title[Directed last-passage percolation]{Busemann functions, geodesics, and the competition interface for directed last-passage percolation}


\author{Firas Rassoul-Agha}
\address{University of Utah, Mathematics Department, 155 S 1400 E, Salt Lake City, UT 84109, USA}
\email{firas@math.utah.edu}
\thanks{The author was partially supported by NSF grant DMS-1407574.}


\subjclass[2010]{60K35, 60K37, 82B43}

\date{\today}

\begin{abstract}
In this survey article we consider the directed last-passage percolation model on the planar square lattice with nearest-neighbor steps and general i.i.d.\ weights on the vertices, outside of the class of exactly solvable models. 
We show how stationary cocycles are constructed from queueing fixed points and how these cocycles characterize the limit shape, yield existence of Busemann functions in directions where the shape has some regularity,
describe the direction of the competition interface, and answer questions on existence, uniqueness, and coalescence of directional semi-infinite geodesics, and on nonexistence of doubly infinite geodesics.
\end{abstract}

\maketitle


\tableofcontents


\section{Introduction}

In 1965, Hammersley and Welsh \cite{Ham-Wel-65} introduced a model of a fluid flowing through a porous medium that they called
\index{first-passage percolation (FPP)}%
\index{FPP}%
first-passage percolation (FPP). Roughly speaking, the model consists of putting random positive numbers (called weights) on the nearest-neighbor edges of the square lattice $\Z^d$.
These numbers describe the time it takes to traverse the various edges. Only nearest-neighbor paths are allowed between lattice points.
The (random) distance between two  points is then given by the passage time -- the shortest time it takes to go between the  points. A path that realizes the passage time is naturally called a geodesic.
Immediate questions arise regarding the structure of the random metric, its balls, and its geodesics (finite, semi-infinite, and bi-infinite). 

As it is often the case in probability theory,  structure emerges from randomness when we look at the large scale behavior of the model. 
This means scaling the lattice down as we look at farther and farther sites. When done properly, the random metric on the scaled lattice converges to a deterministic one on $\R^d$.
As a result, random balls approach deterministic convex sets as their radii grow and geodesics from the origin to far away points approach straight lines. This raises the next layer of questions regarding 
the size and statistics of the fluctuations of the random objects from their deterministic limits.
See Section 1 of \cite{ch:Damron} for more details and precise formulations.

In a related model, called  directed last-passage percolation or just 
\index{last-passage percolation (LPP)}%
\index{LPP}%
last-passage percolation (LPP) paths are only allowed to take steps in $\{e_1,\dotsc,e_d\}$. One can think of  $e_1+\cdots+e_d$ as a time direction that cannot be reversed.
In fact, in LPP passage time is maximized rather than minimized and the random weights the path collects are usually put on the vertices instead of on the edges, but these are only minor differences from FPP. 
The crucial difference is that paths are now directed.
This creates some simplifications, but also introduces some complications. For example, in LPP it is clear that there exist maximizing paths 
\index{geodesic}%
(still called geodesics by analogy with LPP) from the origin to any point in the first quadrant. There are after all finitely many such paths, as opposed to the situation in FPP.
On the other hand, the passage time is no longer symmetric in LPP: there is no path back to the origin, starting from a point in the first quadrant.

Nevertheless, LPP and FPP are expected to a have similar qualitative behavior at large scales. One of the advantages the two-dimensional LPP has over its cousin FPP is a connection to other models, such as queues and particle systems. Furthermore, with certain specific weight distributions, LPP becomes 
what we call an exactly solvable model. This means that many computations can be carried out all the way down to exact formulas. In this case, one is able to extract very detailed information about the behavior of the model, which then drives or confirms predictions about general LPP and FPP (and related) models.

The focus of this article is on surveying recent developments in connection with the structure of geodesics in LPP. A main tool will be 
Busemann functions, originally used to study the large-scale geometry of geodesics in Riemannian manifolds.


The next section collects some notation for easier reference. Section \ref{LPP:intro} introduces the last-passage percolation model and Section \ref{sec:connection} discusses its connections to the corner growth model, queues in tandem, 
a system of interacting particles, and a competition model. In Section \ref{sec:shape} we describe the scaling limit of the passage time. Section \ref{sec:Bus} introduces the Busemann functions and studies their properties. To prove
existence of these functions we borrow tools from queuing theory. This is done in Section \ref{fixed-pt}. Busemann functions are then used in Section \ref{geodesics} to construct semi-infinite geodesics and describe their structure.
They are also used to study the competition interface, in Section \ref{cif:sec}. In the last two sections we give a brief account of the recent history of Busemann functions and geodesics in percolation and a preview of the
topic of the next two lecture notes \cite{ch:Seppalainen,ch:Corwin}: fluctuations.

\section{Notation}
$\Z$ denotes the integer numbers, $\Z_+$ the nonnegative integers, and $\N$ the positive integers. 
$\R$ denotes the real numbers and $\R_+$ the nonnegative real numbers. $\Q$ denotes the rational numbers.
$e_1$ and $e_2$ are the canonical basis vectors of $\R^2$. $\Uset=\{(t,1-t)=te_1+(1-t)e_2:0\le t\le1\}$ and its relative interior is 
$\riUset=\{(t,1-t):0<t<1\}$. We will write $\xi,\zeta,\eta$ for elements in $\Uset$.
$x_{i,j}=(x_i,\dotsc,x_j)$, for $-\infty\le i<j\le\infty$. $x\cdot y$ denotes the scalar product of vectors $x,y\in\R^2$.
$\abs{x}_1=\abs{x_1}+\abs{x_2}=\abs{x\cdot e_1}+\abs{x\cdot e_2}$. $\fl{a}$ is the largest integer $\le a$. If $a$ is a vector, then $\fl{a}$ is taken coordinatewise. For a 
real number $a$ we write $a^+$ for $\max(a,0)$ and $a^-$ for $-\min(a,0)$. Thus, $a=a^+-a^-$.
For $x,y\in\R^2$, $y\ge x$ and $y\le x$ mean the inequalities hold coordinatewise. 
We will say that a sequence $x_n$ is asymptotically directed into a subset $A\subset\Uset$
if all the limit points of $x_n/n$ are inside $A$.
A random variable $X$ has an exponential distribution with rate $\theta>0$ if $P(X>s)=e^{-\theta s}$ for all $s\ge0$.
The probability density function of such a variable equals $\theta e^{-\theta s}$ for $s\ge0$ and $0$ for $s<0$. 
The mean of $X$ then equals $E[X]=\theta \int_0^\infty s e^{-\theta s}\,ds=\theta^{-1}$. Its variance equals 
	\[E[(X^2-E[X])^2]=E[X^2]-E[X]^2=\theta \int_0^\infty s^2 e^{-\theta s}\,ds-\theta^{-2}=\theta^{-2}.\]
A random variable $X$ is said to have a continuous distribution if $P(X=s)=0$ for all $s\in\R$.

\section{Directed last-passage percolation (LPP)}\label{LPP:intro}
Consider the two-dimensional square lattice $\Z^2$. Put a random real number $\w_x$ at each site $x\in\Z^2$ and let these assignments be independent. 
In more precise terms,  
let $\Omega=\R^{\Z^2}$ and endow it with the product topology and the Borel $\sigma$-algebra. A generic element in $\Omega$ is denoted by $\w=\{\w_x:x\in\Z^2\}$ and called 
an 
\index{environment}%
{\sl environment} or a 
\index{configuration}%
{\sl configuration}. The numbers $\w_x$ are called 
\index{weights}%
{\sl weights}.
Let $\mu$ be a probability measure on $\R$ and let $\P$ be the product probability measure on $\Omega$ with all marginals equal to $\mu$: for a finite collection $x_1,\dotsc,x_n$
and measurable sets $A_1,\dotsc,A_n\subset\R$
	\[\P\big\{\w:\w_{x_i}\in A_i:1\le i\le n\big\}=\prod_{i=1}^n \mu(A_i).\]
(In particular $\P(a<\w_x\le b)=\mu((a,b])$ for all $x\in\Z^2$ and $-\infty\le a<b\le \infty$.) We will abbreviate the average weight value as $m_0=\E[\w_0]$.

When weights $\w_x$ are nonnegative they can be thought of as times spent at sites $x$.
Thus, the 
\index{passage time}%
{\sl passage time} of a path $x_0,\dotsc,x_n\in\Z^2$ is the time it takes to traverse the path and equals $\sum_{i=1}^n\w_{x_i}$. (We choose not to count the time spent at the very first point of the path.)
We will abbreviate $x_{i,j}$ for a sequence $x_i,x_{i+1},\dotsc,x_j$, with a similar notation for $x_{i,\infty}$, $x_{-\infty,j}$, and $x_{-\infty,\infty}$.

The model under consideration is directed, which means that we will only consider 
\index{up-right}%
{\sl up-right} paths, i.e.\ paths $x_{0,n}\in\Z^2$ with $x_{i+1}-x_i\in\{e_1,e_2\}$ for all $0\le i<n$.
We sometimes also call such paths 
\index{admissible}%
{\sl admissible}. If $x,y\in\Z^2$ are such that $x\le y$ coordinatewise, then 
the 
\index{last-passage time}%
{\sl last-passage time} between $x$ and $y$ is given by 
	\begin{align}\label{G:def}
	G_{x,y}=\max\Big\{\sum_{i=1}^n\w_{x_i}:x_{0,n}\text{ up-right, $x_0=x$, $x_n=y$, $n=\abs{y-x}_1$}\Big\}.
	\end{align}
Here, $\abs{\cdot}_1$ is the $\ell^1$ norm on $\R^2$. Let us immediately record an important inequality, called 
\index{superadditivity}%
{\sl superadditivity}: for $x\le y\le z$ coordinatewise
	\begin{align}\label{superadd}
	G_{x,y}+G_{y,z}\le G_{x,z}.
	\end{align}
Paths that maximize in \eqref{G:def} are called 
\index{geodesic}%
{\sl geodesics}. 
See Figure \ref{LPP:fig}. The above inequality comes simply from observing that concatenating a geodesic from $x$ to $y$ with one from $y$ to $z$ gives an up-right path from $x$ to $z$ (which may
or may not be a geodesic).

\begin{figure}
	\begin{center}
		\begin{tikzpicture}[ >=latex, scale=0.6]

			
			 \foreach \x in {0,...,9}{
              			 \foreach \y in {0,...,5}{
					\fill[color=mygray](\x,\y)circle(2mm);  
								}
							} 

			\draw[line width=3pt](9,5)--(6,5)--(6,4)--(5,4)--(5, 3)--(2, 3)--(2,1)--(1,1)--(1,0)--(0,0);
			\fill[color=black](9,5)circle(2mm);
			\fill[color=black](8,5)circle(2mm);
			\fill[color=black](7,5)circle(2mm);
			\fill[color=black](6,5)circle(2mm);
			\fill[color=black](6,4)circle(2mm);
			\fill[color=black](5,4)circle(2mm);
			\fill[color=black](5,3)circle(2mm);
			\fill[color=black](4,3)circle(2mm);
			\fill[color=black](3,3)circle(2mm);
			\fill[color=black](2,3)circle(2mm);
			\fill[color=black](2,2)circle(2mm);
			\fill[color=black](2,1)circle(2mm);
			\fill[color=black](1,1)circle(2mm);
			\fill[color=black](1,0)circle(2mm);
			\fill[color=black](0,0)circle(2mm);
			
			\draw (0,0)node[below left]{\Large$x$}; 
			\draw (9,5)node[above right]{\Large$y$}; 

		\end{tikzpicture}
	\end{center}
	\caption{\small  Illustration of a possible geodesic from $x$ to $y$.}
	\label{LPP:fig}
\end{figure}
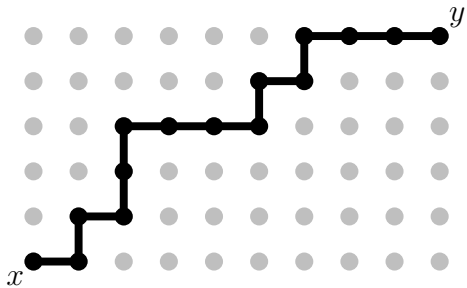

In what follows the math will make sense even when $\w_x$ can be negative and we continue to use the term 
\index{passage time}%
{\sl passage time} even with negative weights.

\begin{remark}
The above model is related to the standard  first\hyp{}passage site\hyp{}percolation model (FPP) as follows. Allow admissible paths to take steps in $\{\pm e_1,\pm e_2\}$, i.e.\ be nearest-neighbor paths. 
Then, paths from $x$ to $y$ no longer have to have a determined length and the last-passage time is defined by
	\[G_{x,y}=\max\Big\{\sum_{i=1}^n\w_{x_i}:x_{0,n}\text{ nearest-neighbor, $x_0=x$, $x_n=y$, $n\ge1$}\Big\}.\]
Replacing $\w$ with $-\w$ turns the $\max$ into a $\min$ and the model becomes the standard 
\index{first-passage percolation (FPP)}%
\index{FPP}%
first-passage percolation model. 
(Note that now weights must be taken nonnegative for otherwise $G_{x,y}$ will be infinite.)
In this case, the inequality in \eqref{superadd} is reversed and $G_{x,y}$ defines a (random) metric on $\Z^2$. It is by analogy with this situation that 
optimizing paths in the directed model are called geodesics.
\end{remark}

\section{Connections to other models}\label{sec:connection}
Say weights $\w_x$ are positive and for simplicity assume they have a continuous distribution, i.e.\ $\P(\w_x=s)=0$ for all $s\in\R$.
Then the last-passage percolation model we defined in Section \ref{LPP:intro} has several other equivalent descriptions. 

\subsection{Random corner growth model (CGM)}\label{cgm} 
\index{corner growth model (CGM)}%
\index{CGM}%

\begin{figure}
\centering
\input{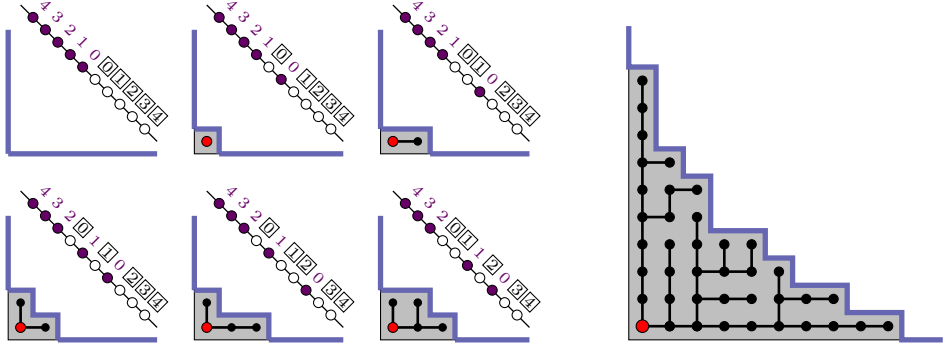}
\caption{\small 
A possible early evolution of the  corner growth model 
\index{corner growth model (CGM)}%
\index{CGM}%
on the first quadrant of the plane. Bullets mark infected lattice points $x$ with $G_{0,x}\le t$.  The origin is distinguished with the larger red bullet.
The gray region is the fattened set $\cB(t)+[-1/2,1/2]^2$.  
The thick purple down-right path is the height function that is the boundary of the fattened set $\cH(t)=(\Z^2_+\setminus\cB(t))+[-1/2,1/2]^2$.
The bold black edges  are the paths of minimal passage time from the origin. 
They are all directed.  
The antidiagonals illustrate the mapping of the 
\index{corner growth model (CGM)}%
\index{CGM}%
corner growth model to TASEP. 
Whenever a point is added to the growing infected cluster,  a particle (solid purple circle) switches places with the hole (open circle) to its right. For the queuing picture, boxed numbers indicate the service stations and numbers without a box are the customers.
When a customer and a station switch places, the customer has left that station and moved to the back of the line at the next station.}
\label{CGM:fig}
\end{figure}

For $x\in\Z^2$ set $G_{x,x}=0$. An infection starts at the origin and spreads into the first quadrant $\Z_+^2$. The set of infected sites at time $t \geq 0$ is
	\[\cB(t) = \{x \in \Z^2_+ : G_{0,x} \le t\}.\]
Let us see how $\cB(t)$ evolves. Since for a site $x=ke_i$, $k\in\N$ and $i\in\{1,2\}$, we have 
	\[G_{0,ke_i}=\sum_{j=1}^{k}\w_{je_i}>G_{0,(k-1)e_i};\] 
such a site cannot get infected before $(k-1)e_i$ is infected.
Similarly, for a site $x\in\N^2$ we have the induction
	\begin{align}\label{G:ind}
	G_{0,x}=\w_x+\max(G_{0,x-e_1},G_{0,x-e_2}).
	\end{align}
Thus, $G_{0,x}>G_{0,x-e_i}$ for both $i\in\{1,2\}$ and a site $x\in\N^2$ cannot be infected until after both $x-e_1$ and $x-e_2$ were infected.

There is a nice description of the evolution of $\cB(t)$ using the fattened set of heathy sites
	\[\cH(t)=(\Z^2_+\setminus\cB(t))+[-1/2,1/2]^2.\] 
See the sequence of  snapshots in Figure \ref{CGM:fig}.
Start with $\cB(0-)=\varnothing$ and 
	\begin{align}\label{H0-}
	\cH(0-)=\Z^2_++[-1/2,1/2]^2=\{x\in\R^2:x\ge-(e_1+e_2)/2\}.
	\end{align}
The boundary of $\cH(0-)$ is given by the path $\{h(s):s\in\R\}$ where 
	\begin{align}\label{h0-}
	h(s)=s^+ e_1+s^- e_2-(e_1+e_2)/2.
	\end{align}

At time $0$ the origin $x=0$ is infected, $\cB(0)=\{0\}$, $\cH(0)=\cH(0-)\setminus[-1/2,1/2)^2$, and the boundary of $\cH(0)$ is 
obtained from that of $\cH(0-)$ by flipping the south-west corner located at $-(e_1+e_2)/2$ to a north-east corner, creating two new south-west corners at $(e_1-e_2)/2$ and $(e_2-e_1)/2$.

For concreteness, say $\w_{e_1}<\w_{e_2}$. Then $e_1$ is the next site to become infected, exactly at time $t_1=\w_{e_1}>0$. 
We have $\cB(t)=\{0\}$ for $0\le t<t_1$ and $\cB(t_1)=\{0,e_1\}$. Region $\cH$ gets another square taken away: 
$\cH(t_1)=\cH(0)\setminus(e_1+[-1/2,1/2)^2)$. Its boundary changes by the south-west corner at $(e_1-e_2)/2$ getting flipped into a north-east corner. 

Site $(k,\ell)$ becomes infected at time $t=G_{0,(k,\ell)}$. Right before this time, the boundary of $\cH(t-)$ has a south-west corner at $ke_1+\ell e_2-(e_1+e_2)/2$ that then flips into a north-east corner.
Hence the name 
\index{corner growth model (CGM)}%
\index{CGM}%
``corner growth''. 

The evolution is particularly nice when weights $\w_x$ have an exponential distribution, i.e.\ when $\P(\w_x>s)=\mu((s,\infty))=e^{-\theta s}$ for some $\theta>0$ and all $s\in\R_+$. Parameter $\theta$ is called the rate
of the exponential random variable.

In this special case the evolution goes as follows.
Given set $\cB(t_0)$ at some point in time $t_0\ge0$ consider the south-west corners on the boundary of $\cH(t_0)$. (There are always finitely many such corners.)
Assign to these corners independent random variables, exponentially distributed with the same rate $\theta$ as weights $\w_x$.
Think of these variables as the time an ``alarm clock'' goes off at the corner the variable is assigned to. When the first of these clocks rings the corresponding south-west corner gets flipped to a north-east corner. At that point in time, we have 
a new $\cH$ and the procedure is repeated. The mathematics of this evolution is quite clear in the next equivalent description.

\subsection{Queues in tandem}\label{queues} The queueing interpretation of LPP in terms of tandem service stations goes as follows.  
Imagine a queueing system with customers labeled  by $\Z_+$ and service stations also labeled by $\Z_+$.   
The random weight $\w_{k,\ell}$ is the service time of customer $k$ at station $\ell$.  Right before time $0$ all customers  are  lined up at service station $0$ and customer $0$ is first in line and has just been served.  
At time $t=0$ customer $0$ is first in line at queue $1$ and the rest of the customers are still at queue $0$ with customer $1$ being first in line there, then customer $2$, and so on. Service of customers $0$ and $1$ begins.
Customers proceed through the system in order, obeying FIFO (first-in-first-out) discipline, 
and joining the queue at station $\ell+1$ as soon as service at station $\ell$ is complete.  Once customer $k\ge0$ is first in line at station $\ell\ge0$, it takes $\w_{k,\ell}$ time units to perform service. 
(The only exception is $k=\ell=0$ where customer $0$ advances immediately from queue $0$ to queue $1$.) 

For each $k\ge0$ and $\ell\ge0$,
$G_{0,(k,\ell)}$ is the time when customer $k$ departs station $\ell$ and joins the end of the queue at station $\ell+1$. To see this observe that on the one hand this is clear when $k=0$ or $\ell=0$ and, on the other hand, this departure time satisfies the same recurrence \eqref{G:ind}. 
Indeed, the departure time of customer $k$ from station $\ell$ is equal to the service time $\w_{k,\ell}$ plus the time $G_{0,(k-1,\ell)}$ when customer $k-1$ departs station $\ell$  or the time customer $k$ departs station $\ell-1$, whichever is larger.
(If the former time is larger, customer $k$ will have to wait for customer $k-1$ before service starts at station $\ell$. If instead the latter time is the larger of the two, then station $\ell$ will be empty when customer $k$ arrives and service starts immediately.)
In terms of the 
\index{corner growth model (CGM)}%
\index{CGM}%
corner growth model, this is exactly when site $(k,\ell)$ gets infected. See Figure \ref{CGM:fig}.
 Among the seminal references for these ideas are \cite{Gly-Whi-91,Mut-79}. 
 
 When weights are exponentially distributed, the description is again quite transparent. At every point in time there are only finitely many non-empty queues. Assign to the first customer in each of these queues an independent
 random variable (a clock) with exponential distribution having the same rate as weights $\w_x$. The customer whose clock rings first has been served and moves on to the end of the next station and then the procedure repeats.

\subsection{Totally asymmetric simple exclusion process (TASEP)}\label{tasep} 
In this model, a configuration is an assignment of $0$s and $1$s to the integers $\Z$. More precisely, it is a function $\eta:\Z\to\{0,1\}$. Think of $\eta_j=1$ as a particle occupying site $j\in\Z$ and
then $\eta_j=0$ means $j$ is empty (or a hole).  Given a configuration $\eta$ such that $\exists j_0$ with $\eta_j=0$ for all $j\ge j_0$ define a curve (known as a 
\index{height function}%
{\sl height function}) 
$h:\R\to\R^2$ by $h(j_0-1/2)=j_0 e_1-(e_1+e_2)/2$, $h(j+1/2)-h(j-1/2)=(1-\eta_j)e_1-\eta_j e_2$ for all $j\in\Z$, and linear interpolation on $\R\setminus(\Z+1/2)$. 
 (This is well defined, regardless of the choice of $j_0$.)

Right before time $t=0$ we start by placing a particle at every site $j\le-1$ and leaving sites $j\ge0$ empty.  In other words, $\eta_j(0-)=\one\{j<0\}$. The corresponding height function is given by \eqref{h0-}, i.e.\ it is the boundary of 
$\cH(0-)$ from \eqref{H0-}.  At time $t=0$ the particle at $j=-1$ jumps to the empty site $j=0$, leaving site $j=-1$ empty. Now, the corresponding height function is given by the boundary of $\cH(0)$.

As $t$ grows particles move around. At any point in time only one particle is allowed to make a move and it can only move one step to its right, if there is no other particle there already. 
Here is a more precise description of the particle dynamics.
Particles move only at times when $\cB(t)$ changes (i.e.\ when new sites are infected). 
Think of the boundary of $\cH(t)$ as a height function. Then, 
there is a one-to-one correspondence between south-west corners in the boundary of $\cH(t)$ and particle-hole pairs (the hole being immediately to the right of the particle).
When a south-west corner in the boundary of $\cH(t)$ is flipped, the corresponding particle jumps one step to its right, switching positions with the hole that was there. See Figure \ref{CGM:fig}.

Comparing this description to the queuing system we see that holes play the role of service stations and particles to the left of a hole play the role of customers in line at that service station.

Once again, when the weights are exponentially distributed the evolution can be described in a Markovian way using exponential clocks. Given a configuration at some time $t_0$ there are only finitely many particles that have a hole immediately to their right.
Each of these particles is given an independent exponential random variable with the same rate as weights $\w_x$ and the particle whose clock rings first moves
one position to the right, effectively switching places with the hole that was there. Then the process is repeated.  This is one of the most fundamental interacting particle systems. 
See \cite{Spi-70,Mac-Gib-Pip-68} for two of the earliest papers on the model.\medskip

Here is a table that summarizes the meaning of $G_{0,(k,\ell)}$ in the models in Sections \ref{cgm}-\ref{tasep}.

\renewcommand{\arraystretch}{1.2}
\begin{center}
    \begin{tabular}{|l||l|}
    \hline
    {\bf Model} & {$\bm{G_{0,(k,\ell)}}$} {\bf is the time when:} \\ \hline\hline
    CGM & site $(k,\ell)$ is infected\\ \hline
    Queues  & customer $k$ clears server $\ell$\\ \hline
    TASEP & particle $k$ exchanges places with hole $\ell$\\ \hline
    \end{tabular}
\end{center}

\subsection{The competition interface (CIF)}\label{cif-intro}
Start  two infections at $e_1$ and at $e_2$ and let sites get infected as before. Mark sites infected by $e_1$ purple and those infected by $e_2$ green. Once a site is infected by one of the two types,
it remains like that forever. This partitions the first quadrant $\Z_+^2\setminus\{0\}$ into two regions of infection. 

Mathematically, recall induction \eqref{G:ind}. Since we assumed weights to be independent and with a continuous distribution, equality $G_{0,x-e_1}=G_{0,x-e_2}$ happens with zero probability.
Thus, 
	\[G_{0,x}=\w_x+G_{0,x-e_i}\]
for exactly one of $i\in\{1,2\}$. This indicates who infected site $x$.

Another description of the spread of infection comes using geodesics. Again, because weights are independent and have a continuous distribution, there is a unique geodesic between any two distinct sites $x\le y$.
As such, the union of all geodesics from $0$ to sites $x\in\Z^2_+\setminus\{0\}$ forms a spanning tree of $\Z^2_+$ that represents the genealogy of the infection. 
The subtrees rooted at $e_1$ and $e_2$ are precisely the vertices infected by these two sites, respectively. They are separated by an up-right path on the dual lattice $\Z^2+(e_1+e_2)/2$ called the 
\index{competition interface}%
{\sl competition interface}. 
See Figure \ref{CIF:fig}.  Properties of this  interface, as well as references for further reading, are in Section \ref{cif:sec}.

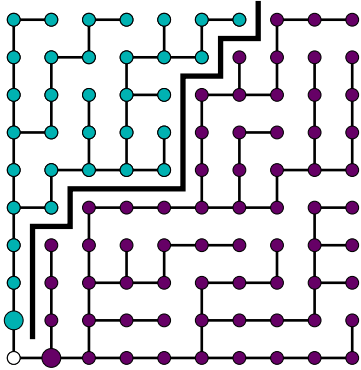
\begin{figure}
\begin{center}

\begin{tikzpicture}[scale=0.5]

\draw[line width=1pt](9,0)--(0,0)--(0,9);     
\draw[line width=1pt](2,0)--(2,4)--(5,4)--(5,7)--(7,7)--(7,9)--(9,9);
\draw[line width=1pt](1,0)--(1,3);
\draw[line width=1pt](0,4)--(1,4)--(1,5)--(2,5)--(2,7);
\draw[line width=1pt](2,5)--(4,5)--(4,6);
\draw[line width=1pt](3,5)--(3,6)--(3,8)--(5,8)--(5,9)--(6,9);
\draw[line width=1pt](3,7)--(4,7);
\draw[line width=1pt](6,7)--(6,8);
\draw[line width=1pt](0,6)--(1,6)--(1,8)--(2,8)--(2,9)--(3,9);
\draw[line width=1pt](4,8)--(4,9);
\draw[line width=1pt](0,9)--(1,9);
\draw[line width=1pt](5,4)--(7,4)--(7,5)--(9,5)--(9,8);
\draw[line width=1pt](6,4)--(6,6)--(7,6);
\draw[line width=1pt](8,5)--(8,8);
\draw[line width=1pt](2,1)--(8,1)--(8,2)--(9,2);
\draw[line width=1pt](2,2)--(4,2)--(4,3)--(6,3);
\draw[line width=1pt](9,0)--(9,1);

\draw[line width=1pt](5,0)--(5,1);
\draw[line width=2pt,color=white](4,1)--(5,1);

\draw[line width=1pt](5,1)--(5,2)--(7,2)--(7,3);
\draw[line width=1pt](8,2)--(8,4)--(9,4);
\draw[line width=1pt](3,2)--(3,3);
\draw[line width=1pt](8,3)--(9,3);


\draw[line width=2.0pt](.5,.5)--(0.5,3.5)--(1.5,3.5)--(1.5, 4.5)--(4.5,4.5)--(4.5,7.5)--(5.5,7.5)--(5.5,8.5)--(6.5,8.5)--(6.5, 9.5);   
\foreach \x in {0,...,9}{
             \foreach \y in {0,...,9}{
\draw[ fill =sussexp](\x,\y)circle(1.7mm);    
}}

\foreach \x in{0,...,6}{
		\draw[ fill =sussexg](\x, 9)circle(1.7mm);    
				}

\foreach \x in{0,...,5}{
		\draw[ fill =sussexg](\x, 8)circle(1.7mm);    
				}

\foreach \x in{0,...,4}{
		\foreach \y in {5,6,7}{
		\draw[fill =sussexg](\x, \y)circle(1.7mm);    
				}}

\foreach \x in{0,...,1}{
		\draw[ fill =sussexg](\x, 4)circle(1.7mm);    
				}

\foreach \x in{0,...,0}{
		\foreach \y in {1,2,3}{
		\draw[fill =sussexg](\x, \y)circle(1.7mm);    
				}}

\draw[fill=white](0,0)circle(1.7mm); 
\draw[fill=sussexg](0,1)circle(2.5mm); 
\draw[fill=sussexp](1,0)circle(2.5mm);

\end{tikzpicture}

\end{center}

\caption{\small The full tree of infection in the 
\index{corner growth model (CGM)}%
\index{CGM}%
corner growth model.  The origin is the open circle at the bottom left.   The solid black line marks the 
\index{competition interface}%
competition interface  that separates the two competing infections that  
grow from points $e_1$ and $e_2$ marked with larger circles.  
}
\label{CIF:fig}
\end{figure}

\section{The shape function}\label{sec:shape}

One of the central questions in probability theory is to describe the order that emerges out of randomness as the number of random inputs into the system grows.
For instance, the law of large numbers says that if $\{X_n:n\in\N\}$ are independent random variables that are identically distributed
(i.e.\ random samples from the same population), then the sample or empirical mean $(X_1+\cdots+X_n)/n$ converges, with probability one, to the (population) mean $E[X_1]$ (provided this mean is well defined). 
Even though $G_{0,x}$ is not simply a sum
of independent random variables (it has a maximum), the law of large numbers raises the question of whether or not $G_{0,x}$ should grow at most linearly with $\abs{x}_1$. This is indeed the case.
For $a\in\R$ let $\fl{a}$ be the largest integer no greater than $a$. For $x\in\R^2$ let $\fl{x}$ act coordinatewise.  Recall that $m_0=\E[\w_0]$.

\index{shape theorem}%
\begin{theorem}\label{th:shape}
{\rm\cite{Mar-04}} Assume $\E[\abs{\w_0}]<\infty$. Then for any $\xi\in\R_+^2$ the limit
	\begin{align}\label{shape-sub}
	g(\xi)=\lim_{n\to\infty}\frac{G_{0,\fl{n\xi}}}n
	\end{align}
exists almost surely and {\rm(}if $g(\xi)<\infty${\rm)} in $L^1$. It is deterministic, $1$-homogenous,  concave, and satisfies $g(x_1,x_2)=g(x_2,x_1)$, $x_1,x_2\in\R_+$, 
$g(e_1)=g(e_2)=m_0$, and $g(\xi)\ge m_0\,\abs{\xi}_1$. 
If furthermore
	\begin{align}\label{w-bound}
	\int_0^\infty\!\!\!\sqrt{\P(\w_0>s)}\,ds<\infty
	\end{align} 
{\rm(}e.g.\ if $\E[\abs{\w_0}^{2+\e}]<\infty$ for some $\e>0${\rm)}, then $g$ is finite and continuous on all of $\R_+^2$ and
	\begin{align}\label{shape}
	\lim_{n\to\infty}\max_{x\in\Z_+^2:\abs{x}_1=n}\frac{\abs{G_{0,x}-g(x)}}n=0\quad\text{almost surely.}
	\end{align}
\end{theorem}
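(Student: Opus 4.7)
The plan is to apply Kingman's superadditive ergodic theorem along each rational ray to construct $g$ pointwise, extract the listed properties from the superadditivity \eqref{superadd} together with obvious symmetries, then extend continuously to all of $\R_+^2$ under \eqref{w-bound}, and finally upgrade directional convergence to the uniform statement \eqref{shape} by a finite-cover/sandwich argument based on coordinatewise monotonicity of $x\mapsto G_{0,x}$.

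For the pointwise limit, I fix $z\in\Z_+^2\setminus\{0\}$ and set $X_n=G_{0,nz}$. By \eqref{superadd}, $X_{n+m}\ge X_n+G_{nz,(n+m)z}$, the second summand is equidistributed with $X_m$, and the shift by $z$ acts ergodically on $\OSP$ since $\P$ is a product measure. Under $\E[\abs{\w_0}]<\infty$ Kingman's theorem yields an a.s.\ and $L^1$ limit $X_n/n\to\gamma(z):=\sup_n \E[X_n]/n$, deterministic and equal to $g(\xi)$ upon setting $g(z/k):=\gamma(z)/k$; consistency across representations gives $1$-homogeneity. Concavity on $\Q^2\cap\R_+^2$ comes from one more application of superadditivity with $z=n\xi$, $z'=n(\xi+\eta)$, after which the inequality extends to $\R_+^2$ by continuity. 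Symmetry $g(x_1,x_2)=g(x_2,x_1)$ is inherited from the distributional symmetry of $\w$ under swapping coordinates. Since the path from $0$ to $ne_i$ is unique, $X_n=\sum_{j=1}^n\w_{je_i}$ and the classical strong law yields $g(e_i)=m_0$. The lower bound $g(\xi)\ge m_0\abs{\xi}_1$ follows by taking any deterministic up-right path from $0$ to $\fl{n\xi}$ and averaging.

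To extend to all of $\R_+^2$, concavity and $1$-homogeneity give automatic continuity on $\R_+\cdot\riUset$. Finiteness and continuity up to the boundary rays $\R_+e_1$ and $\R_+e_2$ are exactly where the integrability hypothesis \eqref{w-bound} enters: a direct estimate on $\E[G_{0,(1,n)}]$ in terms of $\int_0^\infty\sqrt{\P(\w_0>s)}\,ds$ controls the contribution of a single narrow strip, and concavity + monotonicity propagate this to a uniform bound on $g$ near the axes. For the uniform shape \eqref{shape}, given $\e>0$ I would pick finitely many rational $\xi_1,\dots,\xi_m\in\Uset$ so that every $\xi\in\Uset$ lies coordinatewise between some $\xi_i,\xi_j$ with $\abs{g(\xi_i)-g(\xi_j)}<\e$ (possible because $g$ is uniformly continuous on $\Uset$). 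Coordinatewise monotonicity of $G_{0,x}$ then sandwiches $G_{0,\fl{n\xi}}$ between $G_{0,\fl{n\xi_i}}$ and $G_{0,\fl{n\xi_j}}$, and the pointwise a.s.\ convergence at the finitely many $\xi_k$ finishes the argument uniformly in $\xi$, after letting $\e\to0$ along a countable sequence.

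The main obstacle is the behavior of $G_{0,x}$ near the axes, which is precisely what the ``square-root tail'' condition \eqref{w-bound} addresses. Without it, a path allowed a short detour into a long row of length $n$ can harvest the maximum of $n$ i.i.d.\ weights, producing a super-linear contribution; both finiteness of $g$ on $\R_+^2$ and the uniform bound \eqref{shape} can then genuinely fail. The technical core of Martin's argument is a maximal inequality / Borel--Cantelli estimate against $\sqrt{\P(\w_0>s)}$ that controls $\max_{\abs{x}_1=n}G_{0,x}$ uniformly in the regime where one coordinate of $x$ is much smaller than the other, and this is the step I expect to require the most care.
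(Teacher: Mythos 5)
Your outline follows essentially the same skeleton as the paper's proof: the limit \eqref{shape-sub} along integer/rational rays via the superadditive (Kingman/Liggett) ergodic theorem, homogeneity by rescaling, concavity from superadditivity \eqref{superadd} plus shift-invariance, symmetry from exchangeability, $g(e_i)=m_0$ by the strong law along the axes, and the lower bound $g\ge m_0\abs{\xi}_1$ from a fixed L-shaped path. Where you genuinely diverge is the continuity/finiteness step near the axes: you invoke Martin's route, bounding $\E[G]$ in narrow strips directly through $\int_0^\infty\sqrt{\P(\w_0>s)}\,ds$ (the point being $\E[\max_{i\le n}\w_i^+]\lesssim\sqrt{n}\int\sqrt{\P(\w_0>s)}\,ds$), whereas the paper gives a self-contained argument under bounded weights by counting the ${nk(1+\e)\choose nk\e}$ paths against an exponential Chebyshev bound, matching it with the lower bound \eqref{cont1}, and deferring the unbounded case to Martin. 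Both are legitimate; yours is closer to what is actually needed under \eqref{w-bound}, the paper's is more elementary but only captures the bounded case. One small mismatch of bookkeeping: the limit \eqref{shape-sub} for arbitrary (irrational) $\xi\in\R_+^2$ is asserted under $\E\abs{\w_0}<\infty$ alone, so you should not route that extension through \eqref{w-bound}; a sandwich between nearby rational directions plus concavity of $g$ on rational points suffices in the interior (the paper also omits this step, citing Martin).

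There is, however, a concrete flaw in your argument for the uniform statement \eqref{shape}. First, two distinct points of $\Uset$ are never coordinatewise comparable (they have equal $\ell^1$ norm), so ``$\xi$ lies coordinatewise between $\xi_i$ and $\xi_j$'' is vacuous as stated; the standard fix is to compare $\fl{n\xi}$ with scaled anchor points such as $\fl{(1-\e)n\xi_i}\le\fl{n\xi}\le\fl{(1+\e)n\xi_j}$ and use homogeneity of $g$ to absorb the $\e$. Second, and more seriously, coordinatewise monotonicity of $x\mapsto G_{0,x}$ holds only for nonnegative weights: superadditivity gives $G_{0,y}\ge G_{0,x}+G_{x,y}$, and you need $G_{x,y}\ge0$, which fails when $\w$ takes negative values --- and the theorem assumes only $\E\abs{\w_0}<\infty$ with an upper-tail condition, so the weights may be unbounded below. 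If the weights are bounded below by $c$ you can recenter ($\tilde\w=\w-c$, $G_{0,x}=\tilde G_{0,x}+c\abs{x}_1$) and your sandwich goes through; in general you must replace monotonicity by a uniform control of the passage time over the short connecting segments (length $\le\e n$), i.e.\ a maximal/Borel--Cantelli estimate showing these contribute at most $C\e n$ in absolute value uniformly over the polynomially many anchor--target pairs. This is exactly the kind of estimate Martin supplies and the paper sidesteps by citation, so your sketch as written has a gap at this point, though a repairable one.
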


In particular, limit \eqref{shape} says that the fattened set $(\cB(t)+[-1/2,1/2])/t$ converges almost surely, as $t\to\infty$, to the set $\{x\in\R_+^2:g(x)\le1\}$. Thus, \eqref{shape} is called a 
\index{shape theorem}%
{\sl shape theorem} and
$g$ is the 
\index{shape function}%
{\sl shape function}. See Figure \ref{shape:fig}.

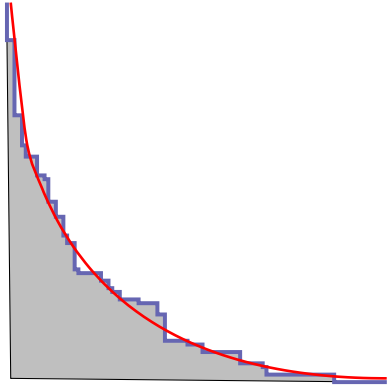
\begin{figure}
\centering
\def\thickk{4pt}

\begin{tikzpicture}[>=latex,scale=5]


\draw[fill=mygray](0,0)--(-0.01,0.90)--(0.01,0.90)--(0.01,0.77)--(0.01,0.77)--(0.01,0.70)--(0.03,0.70)--(0.03,0.62)--(0.04,0.62)--(0.04,0.60)--(0.04,0.60)--(0.04,0.59)--(0.06,0.59)--(0.06,0.59)--(0.07,0.59)--(0.07,0.55)--(0.07,0.55)--(0.07,0.54)--(0.09,0.54)--(0.09,0.53)--(0.10,0.53)--(0.10,0.49)--(0.10,0.49)--(0.10,0.47)--(0.12,0.47)--(0.12,0.46)--(0.12,0.46)--(0.12,0.43)--(0.14,0.43)--(0.14,0.40)--(0.14,0.40)--(0.14,0.38)--(0.15,0.38)--(0.15,0.36)--(0.17,0.36)--(0.17,0.30)--(0.17,0.30)--(0.17,0.29)--(0.18,0.29)--(0.18,0.28)--(0.20,0.28)--(0.20,0.28)--(0.20,0.28)--(0.20,0.28)--(0.21,0.28)--(0.21,0.28)--(0.23,0.28)--(0.23,0.28)--(0.23,0.28)--(0.23,0.28)--(0.24,0.28)--(0.24,0.26)--(0.26,0.26)--(0.26,0.24)--(0.27,0.24)--(0.27,0.23)--(0.28,0.23)--(0.28,0.23)--(0.28,0.23)--(0.28,0.23)--(0.29,0.23)--(0.29,0.21)--(0.30,0.21)--(0.30,0.21)--(0.32,0.21)--(0.32,0.21)--(0.33,0.21)--(0.33,0.21)--(0.34,0.21)--(0.34,0.20)--(0.34,0.20)--(0.34,0.20)--(0.35,0.20)--(0.35,0.20)--(0.36,0.20)--(0.36,0.20)--(0.38,0.20)--(0.38,0.20)--(0.39,0.20)--(0.39,0.17)--(0.40,0.17)--(0.40,0.17)--(0.41,0.17)--(0.41,0.17)--(0.41,0.17)--(0.41,0.10)--(0.42,0.10)--(0.42,0.10)--(0.43,0.10)--(0.43,0.10)--(0.45,0.10)--(0.45,0.10)--(0.46,0.10)--(0.46,0.10)--(0.47,0.10)--(0.47,0.09)--(0.47,0.09)--(0.47,0.09)--(0.48,0.09)--(0.48,0.09)--(0.49,0.09)--(0.49,0.09)--(0.51,0.09)--(0.51,0.07)--(0.52,0.07)--(0.52,0.07)--(0.53,0.07)--(0.53,0.07)--(0.54,0.07)--(0.54,0.07)--(0.55,0.07)--(0.55,0.07)--(0.56,0.07)--(0.56,0.07)--(0.56,0.07)--(0.56,0.07)--(0.57,0.07)--(0.57,0.07)--(0.58,0.07)--(0.58,0.07)--(0.59,0.07)--(0.59,0.07)--(0.60,0.07)--(0.60,0.07)--(0.61,0.07)--(0.61,0.04)--(0.62,0.04)--(0.62,0.04)--(0.64,0.04)--(0.64,0.04)--(0.65,0.04)--(0.65,0.04)--(0.66,0.04)--(0.66,0.04)--(0.67,0.04)--(0.67,0.03)--(0.68,0.03)--(0.68,0.01)--(0.69,0.01)--(0.69,0.01)--(0.69,0.01)--(0.69,0.01)--(0.70,0.01)--(0.70,0.01)--(0.71,0.01)--(0.71,0.01)--(0.72,0.01)--(0.72,0.01)--(0.73,0.01)--(0.73,0.01)--(0.74,0.01)--(0.74,0.01)--(0.76,0.01)--(0.76,0.01)--(0.77,0.01)--(0.77,0.01)--(0.78,0.01)--(0.78,0.01)--(0.79,0.01)--(0.79,0.01)--(0.80,0.01)--(0.80,0.01)--(0.81,0.01)--(0.81,0.01)--(0.81,0.01)--(0.81,0.01)--(0.82,0.01)--(0.82,0.01)--(0.83,0.01)--(0.83,0.01)--(0.84,0.01)--(0.84,0.01)--(0.85,0.01)--(0.85,0.01)--(0.86,0.01)--(0.86,-0.01)--(0.88,-0.01)--(0.88,-0.01)--(0.89,-0.01)--(0.89,-0.01)--(0.90,-0.01)--(0.90,-0.01)--(0.91,-0.01)--(0.91,-0.01)--(0.92,-0.01)--(0.92,-0.01)--(0.93,-0.01)--(0.93,-0.01)--(0.94,-0.01)--(0.94,-0.01)--(0.94,-0.01)--(0.94,-0.01)--(0.95,-0.01)--(0.95,-0.01)--(0.96,-0.01)--(0.96,-0.01)--(0.97,-0.01)--(0.97,-0.01)--(0.98,-0.01)--(0.98,-0.01)--(0.99,-0.01)--cycle;
\draw[line width=1.5pt,color=sussexb](-0.01,1)--(-0.01,0.90)--(0.01,0.90)--(0.01,0.77)--(0.01,0.77)--(0.01,0.70)--(0.03,0.70)--(0.03,0.62)--(0.04,0.62)--(0.04,0.60)--(0.04,0.60)--(0.04,0.59)--(0.06,0.59)--(0.06,0.59)--(0.07,0.59)--(0.07,0.55)--(0.07,0.55)--(0.07,0.54)--(0.09,0.54)--(0.09,0.53)--(0.10,0.53)--(0.10,0.49)--(0.10,0.49)--(0.10,0.47)--(0.12,0.47)--(0.12,0.46)--(0.12,0.46)--(0.12,0.43)--(0.14,0.43)--(0.14,0.40)--(0.14,0.40)--(0.14,0.38)--(0.15,0.38)--(0.15,0.36)--(0.17,0.36)--(0.17,0.30)--(0.17,0.30)--(0.17,0.29)--(0.18,0.29)--(0.18,0.28)--(0.20,0.28)--(0.20,0.28)--(0.20,0.28)--(0.20,0.28)--(0.21,0.28)--(0.21,0.28)--(0.23,0.28)--(0.23,0.28)--(0.23,0.28)--(0.23,0.28)--(0.24,0.28)--(0.24,0.26)--(0.26,0.26)--(0.26,0.24)--(0.27,0.24)--(0.27,0.23)--(0.28,0.23)--(0.28,0.23)--(0.28,0.23)--(0.28,0.23)--(0.29,0.23)--(0.29,0.21)--(0.30,0.21)--(0.30,0.21)--(0.32,0.21)--(0.32,0.21)--(0.33,0.21)--(0.33,0.21)--(0.34,0.21)--(0.34,0.20)--(0.34,0.20)--(0.34,0.20)--(0.35,0.20)--(0.35,0.20)--(0.36,0.20)--(0.36,0.20)--(0.38,0.20)--(0.38,0.20)--(0.39,0.20)--(0.39,0.17)--(0.40,0.17)--(0.40,0.17)--(0.41,0.17)--(0.41,0.17)--(0.41,0.17)--(0.41,0.10)--(0.42,0.10)--(0.42,0.10)--(0.43,0.10)--(0.43,0.10)--(0.45,0.10)--(0.45,0.10)--(0.46,0.10)--(0.46,0.10)--(0.47,0.10)--(0.47,0.09)--(0.47,0.09)--(0.47,0.09)--(0.48,0.09)--(0.48,0.09)--(0.49,0.09)--(0.49,0.09)--(0.51,0.09)--(0.51,0.07)--(0.52,0.07)--(0.52,0.07)--(0.53,0.07)--(0.53,0.07)--(0.54,0.07)--(0.54,0.07)--(0.55,0.07)--(0.55,0.07)--(0.56,0.07)--(0.56,0.07)--(0.56,0.07)--(0.56,0.07)--(0.57,0.07)--(0.57,0.07)--(0.58,0.07)--(0.58,0.07)--(0.59,0.07)--(0.59,0.07)--(0.60,0.07)--(0.60,0.07)--(0.61,0.07)--(0.61,0.04)--(0.62,0.04)--(0.62,0.04)--(0.64,0.04)--(0.64,0.04)--(0.65,0.04)--(0.65,0.04)--(0.66,0.04)--(0.66,0.04)--(0.67,0.04)--(0.67,0.03)--(0.68,0.03)--(0.68,0.01)--(0.69,0.01)--(0.69,0.01)--(0.69,0.01)--(0.69,0.01)--(0.70,0.01)--(0.70,0.01)--(0.71,0.01)--(0.71,0.01)--(0.72,0.01)--(0.72,0.01)--(0.73,0.01)--(0.73,0.01)--(0.74,0.01)--(0.74,0.01)--(0.76,0.01)--(0.76,0.01)--(0.77,0.01)--(0.77,0.01)--(0.78,0.01)--(0.78,0.01)--(0.79,0.01)--(0.79,0.01)--(0.80,0.01)--(0.80,0.01)--(0.81,0.01)--(0.81,0.01)--(0.81,0.01)--(0.81,0.01)--(0.82,0.01)--(0.82,0.01)--(0.83,0.01)--(0.83,0.01)--(0.84,0.01)--(0.84,0.01)--(0.85,0.01)--(0.85,0.01)--(0.86,0.01)--(0.86,-0.01)--(0.88,-0.01)--(0.88,-0.01)--(0.89,-0.01)--(0.89,-0.01)--(0.90,-0.01)--(0.90,-0.01)--(0.91,-0.01)--(0.91,-0.01)--(0.92,-0.01)--(0.92,-0.01)--(0.93,-0.01)--(0.93,-0.01)--(0.94,-0.01)--(0.94,-0.01)--(0.94,-0.01)--(0.94,-0.01)--(0.95,-0.01)--(0.95,-0.01)--(0.96,-0.01)--(0.96,-0.01)--(0.97,-0.01)--(0.97,-0.01)--(0.98,-0.01)--(0.98,-0.01)--(0.99,-0.01)--(1,-0.01);

\draw[red, smooth, line width=1pt,domain=0:1] plot (\x, {1+\x-2*sqrt(\x)});


\end{tikzpicture}%
\caption{\footnotesize
LPP with exponentially distributed vertex weights with rate $1$. 
The gray region is a simulation of the scaled growing  set  $t^{-1}(\cB(t)+[-1/2,1/2]^2)$ at time $t=160$.
Its  boundary (the thick blue down-right path) approximates the smooth red limit curve 
$\{x\in\R_+^2:\sqrt{x\cdot e_1} +\sqrt{x\cdot e_2}=1\}$, 
as first proved by Rost in 1981.}
\label{shape:fig}
\end{figure}

\begin{proof}[Proof of Theorem \ref{th:shape}]
Let us start by considering the case $\xi\in\Z_+^2$. By superadditivity \eqref{superadd} we have for $m\le n$
	\[G_{0,m\xi}+G_{m\xi,n\xi}\le G_{0,n\xi}.\]
If we had additivity instead of superadditivity, then we could write
	\[G_{0,n\xi}=\sum_{i=0}^{n-1}G_{i\xi,(i+1)\xi}.\]
The summands are 
\index{i.i.d.}%
{\sl independent and identically distributed} (i.i.d.) and as such $n^{-1}\sum_{i=0}^{n-1}G_{i\xi,(i+1)\xi}$ is the sample mean of
random samples of $G_{0,\xi}$. A generalization of the law of large numbers, called the ergodic theorem, tells us then that this sample mean converges to the population mean 
$\E[G_{0,\xi}]$.

Unfortunately, additivity does not hold. However, it turns out that one can prove a stochastic version of Fekete's subadditive lemma and apply this 
\index{subadditive ergodic theorem}%
{\sl subadditive ergodic theorem} to 
$-G_{0,n\xi}$ to obtain the limit \eqref{shape-sub}. A version of this theorem is given in \cite{ch:Damron} as Theorem 3.2.  
A bit more work is needed for the general case $\xi\in[0,\infty)^2$ and even more work is needed to get more uniform control
and prove \eqref{shape}. The details are omitted as they are similar to the ones in the proof of the standard first-passage percolation shape theorem, given in \cite{ch:Damron}. See also Proposition 2.1(i) of \cite{Mar-04}.

Symmetry of $g$ follows from that of the lattice and the fact that  i.i.d.\ random variables are exchangeable (i.e.\ switching them around does not change the joint distribution).
Since $G_{0,ne_i}=\sum_{k=0}^{n-1}\w_{ke_i}$, $i\in\{1,2\}$, \eqref{shape-sub} and the law of large numbers give $g(e_1)=g(e_2)=\E[\w_0]=m_0$. 
Consider next the up-right path $x_{0,\fl{n\xi}}$ from $0$ to $\fl{n\xi}$ that first takes $\fl{n\xi_1}$ $e_1$-steps and then $\fl{n\xi_2}$ $e_2$-steps. We have
	\[n^{-1} G_{0,\fl{n\xi}}\ge n^{-1}\sum_{i=0}^{\fl{n\xi_1}-1}\w_{ie_1}+n^{-1}\sum_{i=0}^{\fl{n\xi_2}-1}\w_{\fl{n\xi_2}+ie_2}.\]
By \eqref{shape-sub} the left-hand side converges to $g(\xi)$ almost surely and hence also in probability.
By the weak law of large numbers, the two sums on the right-hand side converge in probability to $m_0\xi_1$ and $m_0\xi_2$, respectively. It follows that $g(\xi)\ge m_0\abs{\xi}_1$.

Finiteness of $g(\xi)$ comes easily if weights $\w_x$ are bounded above by some constant, i.e.\ if $\P(\w_0\le c)=1$ for some $c>0$ then clearly $G_{0,\fl{n\xi}}\le cn\abs{\xi}_1$ and thus $g(\xi)\le c\abs{\xi}_1$. 
More generally, finiteness would follow from the fact that $g$ is concave, homogenous, and continuous on $\R_+^2$ (all the way up to the boundary).

Let us now prove the regularity properties claimed in the theorem. Homogeneity of $g$ comes simply from
	\[g(c\xi)=\lim_{n\to\infty}\frac{G_{0,\fl{nc\xi}}}n=c\lim_{n\to\infty}\frac{G_{0,\fl{cn\xi}}}{cn}=cg(\xi),\quad\text{for }c>0.\]
Then concavity follows from homogeneity and superadditivity: for $\alpha\in(0,1)$
	\[\alpha g(\xi)+(1-\alpha)g(\zeta)=g(\alpha\xi)+g((1-\alpha)\zeta)\]
	and for $x,y\in\R_+^2$
	\begin{align*}
	&g(x)+g(y)\\ 
	&\quad=\lim_{n\to\infty}\frac{G_{0,\fl{nx}}}n+\lim_{n\to\infty}\frac{G_{0,\fl{ny}}}n&\text{(almost surely)}\\
	&\quad=\lim_{n\to\infty}\frac{G_{0,\fl{nx}}}n+\lim_{n\to\infty}\frac{G_{\fl{nx},\fl{ny}+\fl{nx}}}n&\text{(in probability, due to shift-invariance)}\\
	&\quad\le\lim_{n\to\infty}\frac{G_{0,\fl{ny}+\fl{nx}}}n&\text{(by supperadditivity \eqref{superadd})}\\
	&\quad=g(x+y).
	\end{align*}
In the second equality, we used the fact that if one shifts the picture, placing the origin where say $z$ used to be, then the two situations are statistically equivalent: for all $z\in\Z^2$ and $x\ge0$, $G_{z,z+x}$ has the same distribution as $G_{0,x}$.
It remains to prove continuity. We will do so under the assumption that weights are bounded in absolute value. This will capture the essence of the argument. The general case requires some extra technical work that we will avoid.  The interested reader can find the details in the proof of Proposition 2.2 of \cite{Mar-04}. 

Fix an $\e\in\Q\cap(0,1/2)$ and an integer $k$ such that $k\e\in\N$.
The above computation gives us	
\begin{align}\label{cont1}
g(e_2+\e e_1)-g(e_2)\ge g(\e e_1)=\e g(e_1)=\e m_0.
\end{align}
We next prove a similar upper bound.

Each up-right path from $0$ to $kn e_2+\e kn e_1$ consists of $kn$ $e_2$-steps and $\e k n$ $e_1$-steps. Thus, there are 
${nk(1+\e)\choose nk\e}$ such paths. By Stirling's approximation $N!\sim\sqrt{2\pi N}N^Ne^{-N}$ we have
that \[{nk(1+\e)\choose nk\e}\sim\sqrt{\frac{1+\e}{2\pi nk\e}}e^{nkh(\e)},\]
where $h(\e)=(1+\e)\log(1+\e)-\e\log\e$.
Next, fix $\delta>0$ and use a union bound (i.e.\ that $\P(\cup_j A_j)\le\sum_j\P(A_j)$) and $m_0=g(e_2)$ to write
 	\begin{align*}
	\P\bigl(G_{0,kne_2+\e kne_1}\ge nk g(e_2)+nk\delta\bigr)
	\le\sum_{\substack{x_{0,nk(1+\e)}\\ \text{ up-right}}}\P\Bigl(\sum_{i=1}^{nk(1+\e)}\w_{x_i}\ge nkg(e_2)+nk\delta\Bigr)&\\
	=\sum_{\substack{x_{0,nk(1+\e)}\\ \text{ up-right}}}\P\Bigl(\sum_{i=1}^{nk(1+\e)}(\w_{x_i}-m_0)\ge -nk\e m_0+nk\delta\Bigr).&
	\end{align*}
Now observe that once the up-right path $x_{0,nk(1+\e)}$ is fixed, weights $\w_{x_i}$ are i.i.d.\ with mean $m_0$. Thus, all the probabilities in the last sum have the same value and we can continue by writing
 	\begin{align}\label{inter}
	\begin{split}
	&\P\bigl(G_{0,kne_2+\e kne_1}\ge nk g(e_2)+nk\delta\bigr)\\
	&\qquad\qquad\le\sqrt{\frac{1+\e}{2\pi nk\e}}e^{nkh(\e)}P\Bigl(\sum_{i=1}^{nk(1+\e)}Z_i\ge nk(\delta-\e m_0)\Bigr),
	\end{split}
	\end{align}
where $Z_i$ are i.i.d.\ centered (and bounded) random variables with the same distribution as $\w_0-m_0$. 

If $m_0>0$, then pick $\e\in(0,\delta/m_0)$. Otherwise, just pick $\e>0$. Then the event in the last probability contradicts the law of large numbers which says that the sample mean $\frac1{nk(1+\e)}\sum_{i=1}^{nk(1+\e)}Z_i$ should be 
close to $0$. In fact, large deviation theory tells us that the probability of such an event decays exponentially fast. Indeed, fix a positive number $\lambda>0$ and use Chebyshev's exponential inequality to write
	\begin{align*}
	P\Bigl(\sum_{i=1}^{nk(1+\e)}Z_i\ge nk(\delta-\e m_0)\Bigr)
	&\le E\bigl[e^{\sum_{i=1}^{nk(1+\e)}\lambda Z_i-\lambda nk(\delta-\e m_0)}\bigr]\\
	&=e^{-\lambda nk(\delta-\e m_0)}E[e^{\lambda Z_0}]^{nk(1+\e)}\\
	&= \exp\Bigl\{-nk\Bigl(\lambda(\delta-\e m_0)-(1+\e)\log E[e^{\lambda Z_0}]\Bigr)\Bigr\}.
	\end{align*}
A little calculus exercise shows that $e^x\le1+x+x^2$ for $x$ close to $0$. Use this and $\log(1+x)\le x$ (valid for all $x$) to continue the above computation, remembering that $E[Z_0]=0$,
	\begin{align*}
	&P\Bigl(\sum_{i=1}^{nk(1+\e)}Z_i\ge nk(\delta-\e m_0)\Bigr)\\
	&\qquad\le \exp\Bigl\{-nk\Bigl(\lambda(\delta-\e m_0)-(1+\e)\log \bigl(1+\lambda^2 E[Z_0^2]\bigr)\Bigr)\Bigr\}\\
	&\qquad\le \exp\Bigl\{-nk\Bigl(\lambda(\delta-\e m_0)-(1+\e)\lambda^2 E[Z_0^2]\Bigr)\Bigr\}.
	\end{align*}
Take $\lambda=(\delta-\e m_0)/(2+2\e)$ and the above becomes
	\[P\Bigl(\sum_{i=1}^{nk(1+\e)}Z_i\ge nk(\delta-\e m_0)\Bigr)\le e^{-(\delta-\e m_0)^2nk/(4+4\e)}.\]
Going back to \eqref{inter} we have
	\begin{align*}
	&\P\bigl(G_{0,kne_2+\e kne_1}\ge nk g(e_2)+nk\delta\bigr)\\
	&\qquad\qquad\le \sqrt{\frac{1+\e}{2\pi nk\e}}\exp\Bigl\{-nk\Bigl(\frac{(\delta-\e m_0)^2}{4(1+\e)}-(1+\e)\log(1+\e)+\e\log\e\Bigr)\Bigr\}.
	\end{align*}
Taking $\e$ small enough makes the right-hand side converge exponentially fast to $0$ as $n\to\infty$. Combining this with the fact that $G_{0,kne_2+\e kne_2}/(nk)$ converges almost surely to $g(e_2+\e e_1)$ gives
	\[g(e_2+\e e_1)-g(e_2)\le\delta.\]
Together with \eqref{cont1} this proves continuity of $a\mapsto g(e_2+a e_1)$ at $a=0$. Symmetry gives the same result with $e_1$ and $e_2$ switched around.  This and the homogeneity of $g$ imply its continuity at the boundary of $\R^2_+$.
Continuity in the interior is a known fact about concave functions.
\end{proof}

When weights $\w_x$ are exponentially distributed (with rate $\theta>0$) one can get an explicit formula for the 
\index{shape function!explicit}%
shape:
	\begin{align}\label{shape:exp}
	g(\xi_1,\xi_2)=m_0(\xi_1+\xi_2)+2\sigma_0\sqrt{\xi_1\,\xi_2}\,,\quad\xi=(\xi_1,\xi_2)\in\R_+^2.
	\end{align}
Here, $m_0=\theta^{-1}$ is the mean of $\w_x$ and $\sigma_0=\theta^{-1}$ is its standard deviation. (The two quantities are equal for exponential random variables, but
this is not true in general.) 

A similar formula also holds when weights $\w_x$ are geometrically distributed, i.e.\ when $\mu$ is supported on $\N$ and for some $p\in(0,1)$ and all $j\in\N$, $\P(\w_x=j)=\mu(\{j\})=p^{j-1}(1-p)$.
In the exponential case this formula was first derived by Rost \cite{Ros-81} (who presented the model in its coupling with TASEP without the last-passage formulation)  while early derivations of the geometric case appeared in  \cite{Coh-Elk-Pro-96, Joc-Pro-Sho-98, Sep-98-mprf-1}.  
We will prove the formula at the end of Section \ref{fixed-pt} using equation \eqref{B-g} and the explicit knowledge of the distribution of Busemann functions.
See also  Theorem 3.4 in \cite{ch:Seppalainen}.

Other than the above two cases, no explicit formula is known for $g$. However, it is known that the above formula does hold in general near the boundary.

\begin{theorem}\label{th:Martin}
{\rm\cite{Mar-04}} Assume
	\[\int_0^\infty\!\!\!\sqrt{\P(\w_0>s)}\,ds<\infty\quad\text{and}\quad \int_{-\infty}^0\!\!\!\sqrt{\P(\w_0\le s)}\,ds<\infty.\]
Let $m_0=\E[\w_0]$ and $\sigma_0^2=\E[\w_0^2]-m_0^2$.
Then 
	\begin{align}\label{martin}
	g(1,\alpha)=m_0+2\sigma_0\sqrt{\alpha}+o(\sqrt\alpha)\quad\text{as }\alpha\searrow0.
	\end{align}
\end{theorem}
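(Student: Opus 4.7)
The plan is to reduce the near-boundary asymptotics of $g$ to a last-passage problem for centered i.i.d.\ weights, and then to identify its leading order via the Brownian last-passage percolation / GUE-eigenvalue connection. Concretely, I would fix small $\alpha\in(0,1)$ and set $k=\fl{\alpha n}$. Each up-right path from $0$ to $(n,k)$ is parameterized by the positions $0\le j_1\le\cdots\le j_k\le n$ of its $k$ vertical steps, and subtracting $m_0$ from every visited weight gives
\[
G_{0,(n,k)}=(n+k+1)\,m_0+\widetilde G_{0,(n,k)},
\]
where $\widetilde G_{0,(n,k)}$ is the last-passage time of the centered weights $Z_x=\w_x-m_0$, which have mean zero and variance $\sigma_0^2$.

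The heart of the argument is an invariance principle for $\widetilde G_{0,(n,k)}$. For fixed $k$ and $n\to\infty$, Donsker's theorem yields that the $k+1$ row partial-sum processes, rescaled by $\sqrt n$, converge jointly to independent Brownian motions $\sigma_0 B^{(0)},\dots,\sigma_0 B^{(k)}$ on $[0,1]$, and hence
\[
\frac{\widetilde G_{0,(n,k)}}{\sqrt n}\ \Longrightarrow\ \sigma_0\sup_{0=t_0\le\cdots\le t_{k+1}=1}\sum_{i=0}^{k}\bigl(B^{(i)}(t_{i+1})-B^{(i)}(t_i)\bigr).
\]
By Baryshnikov's theorem the right-hand side equals in law the largest eigenvalue of a $(k+1)\times(k+1)$ GUE matrix, whose mean is asymptotic to $2\sqrt{k+1}$ as $k\to\infty$. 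The two moment hypotheses $\int_0^\infty\sqrt{\P(\w_0>s)}\,ds<\infty$ and $\int_{-\infty}^0\sqrt{\P(\w_0\le s)}\,ds<\infty$ are precisely what is needed to upgrade convergence in distribution to convergence of means via uniform integrability on both sides.

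Setting $k=\fl{\alpha n}$ and combining the two asymptotics gives $\widetilde G_{0,(n,k)}\sim 2\sigma_0 n\sqrt\alpha$; dividing by $n$ and using that $m_0\alpha=o(\sqrt\alpha)$ as $\alpha\searrow 0$ then yields $g(1,\alpha)=m_0+2\sigma_0\sqrt\alpha+o(\sqrt\alpha)$. The principal obstacle is that the CLT/Baryshnikov limit is stated for $k$ fixed, whereas in our regime $k=\fl{\alpha n}$ grows together with $n$. To close this gap one must establish (i) a matching upper bound, via a union-bound and Chernoff-type estimate uniform in $k$, in the spirit of the continuity argument used in the proof of Theorem \ref{th:shape}; and (ii) a matching lower bound by restricting the maximum to a carefully chosen subfamily of paths, for instance those whose $k$ vertical steps are evenly spaced across the $n$ columns, on which the required concentration is straightforward. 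Carrying out this uniform comparison is the technical heart of \cite{Mar-04}.
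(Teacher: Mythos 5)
You should first be clear that the survey does not prove this theorem: it is quoted from \cite{Mar-04}, with only the remark that the order $\sqrt\alpha$ comes from refining the union-bound continuity argument and that the constant comes from a comparison with exponential weights. Your identification of the constant via Donsker's theorem, Baryshnikov's identity, and the $2\sqrt{k+1}$ asymptotics of the largest GUE eigenvalue is a legitimate substitute for that exponential comparison: it correctly produces the iterated limit ($n\to\infty$ with $k$ fixed, then $k\to\infty$), and combined with superadditivity (for fixed $N,k$ one has $g(N,k)\ge\E[G_{0,(N,k)}]$, plus monotonicity of $\alpha\mapsto g(1,\alpha)$) it yields the lower bound $g(1,\alpha)\ge m_0+2\sigma_0\sqrt\alpha-o(\sqrt\alpha)$, with the hypothesis $\int_0^\infty\sqrt{\P(\w_0>s)}\,ds<\infty$ entering concretely through $\E[\max_{i\le n}\w_{x_i}^{\pm}]\le C\sqrt n$ rather than as a generic uniform-integrability remark.

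The genuine gap is the matching upper bound, which is where the entire difficulty of the theorem sits and which you explicitly defer to \cite{Mar-04}. The mechanism you propose for it, a union bound over paths with a Chernoff estimate ``in the spirit of the continuity argument,'' cannot prove \eqref{martin}: with $k=\fl{\alpha n}$ there are $\binom{n(1+\alpha)}{\fl{\alpha n}}=\exp\{n(\alpha\log(1/\alpha)+O(\alpha))\}$ admissible paths, while each path's centered sum exceeds $\delta n$ with probability at most $\exp\{-cn\delta^2\}$, so the first-moment method cannot certify anything better than $g(1,\alpha)\le m_0+C\sqrt{\alpha\log(1/\alpha)}$; the entropy overwhelms $2\sigma_0\sqrt\alpha$ by an unbounded factor as $\alpha\searrow0$, and the constant $2$ is inaccessible this way. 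This is exactly why correlations between overlapping paths must be exploited (e.g.\ a block decomposition over groups of rows combined with the fixed-$k$ Brownian limit, as in \cite{Mar-04}). A smaller problem of the same type affects your fallback lower bound: forcing the $k$ vertical steps to be evenly spaced decouples the maximum into $k$ independent two-row blocks and gives a gain of order $\sigma_0\sqrt{nk}$, but with a constant strictly smaller than $2$; recovering the constant $2$ requires block heights that grow, which is precisely the superadditivity-plus-fixed-$k$ argument already mentioned. So your outline locates the constant correctly, but the two devices offered to bridge from fixed $k$ to the regime $k\asymp\alpha n$ would not deliver the sharp statement, and without them the proposal is a reduction to \cite{Mar-04} rather than a proof.
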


The fact that $g(1,\alpha)-m_0$ is of order $\sqrt{\alpha}$ for small $\alpha$ comes by a more careful look at the  continuity proof above. An even more careful look involving comparison to the case with exponential weights gives the more precise formula \eqref{martin}.

The above has a nice consequence regarding the limiting shape.

\begin{corollary}
The limiting shape $\{x\in\R_2^+:g(x)\le1\}$ is not a polygon {\rm(}with finitely many sides{\rm)}.
\end{corollary}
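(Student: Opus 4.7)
The plan is to use Theorem \ref{th:Martin} to rule out any piecewise-linear behavior of the boundary near where it meets the coordinate axes. Since $g(e_1)=g(e_2)=m_0$ and $g$ is $1$-homogeneous, $g(a,0)=g(0,a)=m_0 a$, and so $\{g\le 1\}$ meets the $e_1$- and $e_2$-axes exactly at $P_1=(1/m_0,0)$ and $P_2=(0,1/m_0)$. Suppose for contradiction that the shape is a polygon with finitely many sides. Then the portion of its boundary in the open first quadrant is a piecewise-linear curve joining $P_1$ and $P_2$, so some non-degenerate line segment of this boundary has $P_1$ as an endpoint. I will parametrize that segment as $\{(1/m_0-at,\,bt):0\le t\le T\}$ for some $a\ge 0$, $b>0$, $T>0$ (the case $a=0$ allowing a vertical segment, while $a<0$ is excluded since $1/m_0$ is the maximum extent of the shape along the $e_1$-axis).

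If $a>0$, the identity $g(1/m_0-at,\,bt)=1$ and $1$-homogeneity give
\[ g\Bigl(1,\;\frac{bt}{1/m_0-at}\Bigr)\;=\;\frac{m_0}{1-am_0 t}, \qquad 0\le t\le T. \]
Setting $\alpha(t)=bt/(1/m_0-at)=bm_0\,t+O(t^2)$ and Taylor-expanding the right-hand side in $t$ yields
\[ g(1,\alpha)-m_0 \;=\; am_0^2\,t + O(t^2) \;=\; (am_0/b)\,\alpha + O(\alpha^2) \]
as $\alpha\to 0^+$. In the vertical case $a=0$, the same identity degenerates to $g(1,\alpha)\equiv m_0$ for all small $\alpha>0$. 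Either way, $g(1,\alpha)-m_0=O(\alpha)$ as $\alpha\searrow 0$.

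This contradicts Theorem \ref{th:Martin}, which gives $g(1,\alpha)-m_0=2\sigma_0\sqrt{\alpha}+o(\sqrt{\alpha})$, so that $(g(1,\alpha)-m_0)/\alpha\to\infty$ as $\alpha\searrow 0$ whenever $\sigma_0>0$. The only genuine obstacle is the degenerate case $\sigma_0=0$ (i.e., $\w_0$ almost surely constant): then $g(x)=m_0(x_1+x_2)$ and the shape is the triangle $\{x_1+x_2\le 1/m_0\}$, which is a polygon. The corollary must therefore be read as implicitly assuming that the weight distribution is not a point mass; under that mild non-degeneracy, the square-root correction dominates any linear approach to $P_1$ and the contradiction is complete.
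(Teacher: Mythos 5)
Your argument is correct and is exactly the route the paper intends: the corollary is stated there as an immediate consequence of Theorem \ref{th:Martin}, and your homogeneity computation showing that a polygonal boundary edge at $(1/m_0,0)$ would force $g(1,\alpha)-m_0=O(\alpha)$, contradicting the $2\sigma_0\sqrt{\alpha}+o(\sqrt{\alpha})$ asymptotics, is the natural way to spell that consequence out. Your remark about the degenerate case $\sigma_0=0$ (a.s.\ constant weights, where the shape really is a triangle) correctly identifies the implicit nondegeneracy assumption.
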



It is tempting to think that perhaps formula \eqref{shape:exp} holds in general. The following situation shows that this is not the case.

Assume that the LPP weights satisfy $\w_x\le 1$ and $p=\P\{\w_0=1\}>0$. The classical Durrett-Liggett flat edge result implies that if $p$ is large enough,  $g$ is linear on a whole cone. 
See Figure \ref{flat:fig}.

\begin{figure}
\centering
\input{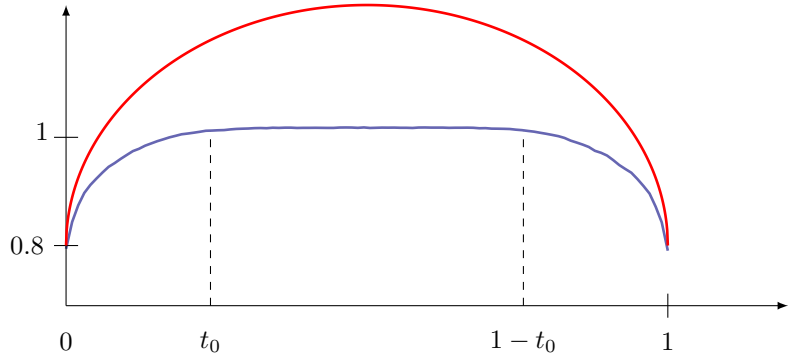}
\caption{\footnotesize
LPP with Bernoulli distributed vertex weights: $\P(\w_0=1)=1-\P(\w_0=0)=0.8$. 
The bottom blue line is a simulation of the curve $t\mapsto g(te_1+(1-t)e_2)$, $t\in[0,1]$.
The top red curve is $t\mapsto 0.8+\sqrt{0.8\times0.2t(1-t)}$, according to formula \eqref{shape:exp}. 
The two curves are different: the top red one is strictly concave while the bottom blue one is flat on $[t_0,1-t_0]$. 
But their asymptotics match near $t=0$ and $t=1$.}
\label{flat:fig}
\end{figure}

\begin{theorem} \label{flat:thm}
{\rm\cite{Dur-Lig-81}}
Suppose $\E[\abs{\w_0}^{2+\e}]<\infty$ for some $\e>0$ and $\P\{\w_x\le 1\}=1$.
There exists a critical value $p_c\in(0,1)$ such that
if $p=\P\{\w_0=1\}>p_c$,  then there exists $t_0\in(0,1/2)$ such that $g(\xi)=\abs{\xi}_1=\xi\cdot(e_1+e_2)$ for all $\xi\in\R_+^2$ with $\xi\cdot e_1/\abs{\xi}_1\in[t_0,1-t_0]$.
\end{theorem}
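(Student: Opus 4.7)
The plan is to reduce to supercritical oriented site percolation, construct fully open paths in every direction of an appropriate cone, and then use the shape theorem (Theorem \ref{th:shape}) to upgrade a positive-probability lower bound into a deterministic one. The upper bound $g(\xi)\le\abs{\xi}_1$ is immediate: $\w_x\le 1$ and every admissible path from $0$ to $y$ has length $\abs{y}_1$, so $G_{0,y}\le\abs{y}_1$. The content is the matching lower bound on the flat cone. Declare a site $x$ \emph{open} iff $\w_x=1$; since the weights are i.i.d.\ with $\P(\w_0=1)=p$, the open sites realize Bernoulli($p$) oriented site percolation on $\Z_+^2$ (with $\{e_1,e_2\}$ steps), and any fully open up-right path from $x$ to $y$ has passage time \emph{exactly} $\abs{y-x}_1$, saturating the upper bound.

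I would next invoke the edge-speed theory of supercritical oriented site percolation. Let $p_c\in(0,1)$ denote its critical parameter and let $C_0$ be the open oriented cluster of the origin. For $p>p_c$ the origin percolates with positive probability, and a subadditive argument on the percolation event yields almost sure limits $\rho_n/n\to\alpha(p)$ and $\lambda_n/n\to 1-\alpha(p)$, where
\[\rho_n:=\max\{k:(k,n-k)\in C_0\},\qquad\lambda_n:=\min\{k:(k,n-k)\in C_0\},\]
with $\alpha(p)\in(1/2,1)$ strictly whenever $p>p_c$ and $\alpha(p)\to 1$ as $p\to 1$. The main obstacle is to promote these extremal edge-speed statements into the two-dimensional claim that, on the percolation event, every $\fl{n\xi}$ with $\xi\in\riUset$ and $\xi\cdot e_1\in(1-\alpha(p),\alpha(p))$ eventually lies in $C_0$. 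This is an \emph{inter-level connectivity} property: the cluster at level $n$ must fill a positive-density subset of $[\lambda_n,\rho_n]$, not merely achieve these extremes on separate realizations. The standard route is a renormalization/block construction that couples the restricted cluster to a very-high-density oriented percolation on a coarse lattice of ``good blocks,'' together with box-crossing estimates that propagate good blocks along every internal direction of the cone.

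Granting this ``percolation in a cone'' fact, pick $\xi\in\riUset$ with $t:=\xi\cdot e_1\in(1-\alpha(p),\alpha(p))$. On the positive-probability event $\{\abs{C_0}=\infty\}$, for all $n$ large there is an all-open up-right path from $0$ to $\fl{n\xi}$, so $G_{0,\fl{n\xi}}=\abs{\fl{n\xi}}_1$ and hence $G_{0,\fl{n\xi}}/n\to\abs{\xi}_1=1$. To remove the conditioning on percolation at the origin, let $z$ be the nearest percolating open site, almost surely at finite lattice distance by stationarity and positivity of the percolation probability; then superadditivity $G_{0,\fl{n\xi}}\ge G_{0,z}+G_{z,\fl{n\xi}}$ produces the same conclusion, since $G_{0,z}$ is a fixed a.s.\ finite random variable and $G_{z,\fl{n\xi}}/n\to\abs{\xi}_1$ by applying the cone argument inside the infinite cluster of $z$. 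Thus $\liminf_n G_{0,\fl{n\xi}}/n\ge\abs{\xi}_1$ almost surely; combined with the almost sure convergence $G_{0,\fl{n\xi}}/n\to g(\xi)$ from Theorem \ref{th:shape} and the determinism of $g(\xi)$, this forces $g(\xi)=\abs{\xi}_1$. Setting $t_0:=1-\alpha(p)\in(0,1/2)$, using continuity of $g$ to include the endpoints, and homogeneity of $g$ to pass from $\riUset$ to all of $\R_+^2$, we obtain the theorem, with $p_c$ the critical probability for oriented site percolation on $\Z_+^2$.
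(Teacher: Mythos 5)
Your reduction to supercritical oriented site percolation and your use of the edge speeds $\rho_n/n\to\alpha(p)$, $\lambda_n/n\to1-\alpha(p)$ are sound, but the pivotal step you ask us to grant is not merely unproven, it is false as stated. For $\fl{n\xi}$ to belong to $C_0$ the site $\fl{n\xi}$ must itself be open, and likewise your identity $G_{0,\fl{n\xi}}=\abs{\fl{n\xi}}_1$ forces $\w_{\fl{n\xi}}=1$ under the definition \eqref{G:def} in force in Section \ref{sec:shape}. The events $\{\w_{\fl{n\xi}}=1\}$ over the infinitely many distinct sites $\fl{n\xi}$ are independent with probability $p<1$, so almost surely they fail for infinitely many $n$: no deterministic ray of target points is eventually contained in the cluster. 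The honest version of your ``percolation in a cone'' claim is that $\fl{n\xi}$ lies within $o(n)$ ($\ell^1$) distance of a cluster point sitting coordinatewise below it; that does require the renormalization you allude to, and in addition the patched $o(n)$ stretch must be controlled, which is not automatic here because the weights are only assumed integrable and may be unbounded below. So the central step of your argument has a real hole, and filling it is substantially more work than the theorem needs.

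The paper's route (following Durrett and Liggett) avoids the interior of the cone altogether and uses only the extreme directions, which you already have from the edge-speed theory you quote. On the event that some site $z\ge0$ percolates, the fully open oriented path from $z$ to the rightmost cluster point at level $n$ has passage time $n$ up to an additive constant, along a random sequence of endpoints whose directions converge to $(\alpha(p),1-\alpha(p))$; the uniform shape theorem \eqref{shape} together with continuity of $g$ then gives $g(\alpha(p),1-\alpha(p))\ge1$, hence $=1$ by your upper bound, and since $g$ is deterministic a positive-probability event suffices (so your deconditioning via a percolating site and superadditivity is not even needed). By symmetry $g(1-\alpha(p),\alpha(p))=1$ as well, and now concavity of $g$ (Theorem \ref{th:shape}) combined with $g(\xi)\le\abs{\xi}_1$ forces $g(t,1-t)=1$ for every $t\in[1-\alpha(p),\alpha(p)]$; homogeneity extends this to the cone, with $t_0=1-\alpha(p)$ and $p_c$ the oriented site percolation threshold. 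In short: pin $g=1$ at the two edges of the percolation cone and let concavity flatten the middle --- this soft convexity step replaces, and makes unnecessary, the hard ``every direction in the cone'' statement on which your proposal founders.
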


Here is a heuristic argument to help understand why the above result holds. When  probability $p$ is large one can show that there is 
an infinite up-right path $x_{0,\infty}$ starting from the origin $x_0=0$ such that $\exists n_0$ with $\w_{x_n}=1$ for all $n\ge n_0$.
Furthermore, one can guarantee that $x_n\cdot e_1/n\to t_0$ for some $t_0\in(0,1/2)$ as $n\to\infty$. The shape theorem then implies that $g(t_0,1-t_0)=1$. By concavity  of $g$
we have that $g(t,1-t)=1$ for all $t\in[t_0,1-t_0]$ and the claim of the theorem follows from the homogeneity of $g$.\medskip

Before we close the section, it may be noteworthy that even though no closed formulas are known for $g$ in general, some variational characterizations of $g$ do exist in the general weights setting.
See Section 2 of \cite{ch:Seppalainen}.

\section{Busemann functions}\label{sec:Bus}

How does one prove \eqref{shape:exp}? Can one get any information on $g$, more than what is given by Theorem \ref{th:shape}?\medskip

One way to approach such questions is by taking a closer look at the shape function. In precise terms, abbreviate
	\[\Uset=\{(t,1-t):t\in[0,1]\}\quad\text{and}\quad\riUset=\{(t,1-t):0<t<1\}\]
and fix a $\xi\in\riUset$. Consider the collection of random variables
	\[\{G_{0,\fl{n\xi}}-G_{0,\fl{n\xi}+z}:\abs{z}_1\le M\},\]
for some $M>0$. In other words,  we want to examine the passage times to points in the vicinity of $n\xi$, relative to the passage time to $n\xi$ itself.
Presumably, as $n\to\infty$, (the distribution of) this vector of random variables converges weakly to some limit and this limit carries some useful information about the large scale behavior of the system ``in direction $\xi$''.

Since the point of reference is moving, there is really no hope of the convergence being almost sure. However, we can change our frame of reference and view things from $n\xi$ by considering
	\[\{G_{-\fl{n\xi},0}-G_{-\fl{n\xi},z}:\abs{z}_1\le M\},\]
or equivalently 
	\[\{G_{-\fl{n\xi},0}-G_{-\fl{n\xi},-z}:\abs{z}_1\le M\}.\]
Since $\{\w_{-x}:x\in\Z^2\}$ has the same distribution as $\{\w_x:x\in\Z^2\}$, the above has the same law as
	\begin{align}\label{(*)}
	\{G_{0,\fl{n\xi}}-G_{z,\fl{n\xi}}:\abs{z}_1\le M\}.
	\end{align}
The advantage now is that there is a chance this random vector does converge to an almost sure limit. This is indeed the case under some mild regularity assumption on the shape $g$.	

One little technicality: since when we defined $G_{x,y}$ in \eqref{G:def} we left out the weight $\w_x$, going through the above reflection argument leads us to slightly change our definition to become
	\begin{align}\label{G:def2}
	G_{x,y}=\max\Big\{\sum_{i=0}^{n-1}\w_{x_i}:x_{0,n}\text{ up-right, $x_0=x$, $x_n=y$, $n=\abs{y-x}_1$}\Big\},
	\end{align}
i.e.\ we now leave out the last weight $\w_y$ instead.  We will use this definition in the rest of the chapter. The reader should note, though, that since we are interested in the large scale behavior of the system,
it is really immaterial whether we leave out or include the first or last weights.

Recall that $g$ is a concave function. Then the set
	\[\cC=\{x\in\R^2_+:g(x)\ge1\}\]
is convex. It can then be decomposed into a union of closed faces (see Section 17 of \cite{Roc-70}). For a given $\xi\in\riUset$ the point $\xi/g(\xi)$ is on the relative boundary of $\cC$. Let 
	\[\Uset_\xi=\Big\{\frac{\zeta}{\abs{\zeta}_1}:\zeta\text{ belongs to the closed face containing $\xi/g(\xi)$}\Big\}.\]
If $g$ is differentiable, then $\Uset_\xi$ is simply the largest connected subset of $\riUset$ containing $\xi$ on which $g$ is affine. 
Set $\Uset_\xi$ cannot contain $e_1$ or $e_2$ because we chose
$\xi$ in the relative interior of $\Uset$ and Theorem \ref{th:Martin} prevents $g$ from being linear on a neighborhood of $e_1$ or $e_2$.

Let $\Uset_{e_1}=\{e_1\}$ and $\Uset_{e_2}=\{e_2\}$. Clearly $\Uset=\cup_{\xi\in\Uset}\Uset_\xi$. When $g$ is differentiable we have that $\forall\xi,\zeta\in\Uset$, either $\Uset_\xi=\Uset_\zeta$ or $\Uset_\xi\cap\Uset_\zeta=\varnothing$.

We will say that a sequence $x_n$ is asymptotically directed into a subset $A\subset\Uset$
if all the limit points of $x_n/n$ are inside $A$.

We are ready to state the theorem about the limits of the gradients in \eqref{(*)}. 
The proof requires a detour into queuing theory and is thus deferred to Section \ref{fixed-pt}.

\begin{theorem}\label{Bus:thm}
{\rm\cite{Geo-Ras-Sep-17-ptrf-1}}
Assume $\P\{\w_0\ge c\}=1$ for some $c\in\R$ and $\E[\abs{\w_0}^{2+\e}]<\infty$ for some $\e>0$. Assume $g$ is differentiable on $(0,\infty)^2$. Then, for each $\xi\in\riUset$ the {\rm(}random{\rm)} limit
	\begin{align}\label{B-limit}
	B^\xi(\w,x,y)=\lim_{n\to\infty}(G_{x,x_n}-G_{y,x_n})
	\end{align}
exists almost surely and in $L^1$ for all $x,y\in\Z^2$ and sequence $x_n\in\Z^2$ that is directed into $\Uset_\xi$. Furthermore,
	\begin{align}\label{B-g}
	\begin{split}
	&\text{for }i\in\{1,2\}\quad\E[B^\xi(0,e_i)]=e_i\cdot \nabla g(\xi)\\
	&\text{and}\quad g(\xi)=\E[B^\xi(0,e_1)]\,\xi\cdot e_1+\E[B^\xi(0,e_2)]\,\xi\cdot e_2.
	\end{split}
	\end{align}
If $\xi,\zeta\in\riUset$ are such that $\Uset_\xi=\Uset_\zeta$, then $B^\xi\equiv B^\zeta$ almost surely.
\end{theorem}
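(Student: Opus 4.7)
The plan is to construct the Busemann functions through stationary \emph{cocycles} obtained as queueing fixed points, and then to identify those cocycles with the limits \eqref{B-limit} via a sandwich argument. Section \ref{fixed-pt} will produce, for each $\xi\in\riUset$, a translation-covariant random function $B^\xi(\omega,x,y)$ that is additive in its spatial arguments ($B^\xi(x,z)=B^\xi(x,y)+B^\xi(y,z)$), whose coordinate increments form stationary ergodic sequences under the shift, and that obeys the LPP recovery relation tying $\omega_x$ to $B^\xi(x,x+e_1)$ and $B^\xi(x,x+e_2)$. The queueing parameter will be tuned so that $\E[B^\xi(0,e_i)]=\partial_i g(\xi)$ for $i\in\{1,2\}$; the second line of \eqref{B-g} is then just Euler's identity for the $1$-homogeneous $g$.

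To prove the almost-sure convergence $G_{x,x_n}-G_{y,x_n}\to B^\xi(x,y)$, I plan to use monotone sandwiching. A parameter perturbation produces a family of cocycles $\{B^\eta:\eta\in\riUset\}$ that is stochastically monotone in $\eta$. Using $B^\eta$ as stationary boundary data along the south and west axes of a quadrant anchored at a distant corner produces a modified LPP whose passage time to $x_n$ equals $B^\eta(0,x_n)$ plus a lower-order boundary correction. Combining a stationary shape theorem for this modified system with the tangency identity $\nabla g(\eta)\cdot\zeta=g(\zeta)$ valid for $\zeta\in\Uset_\eta$, one expects to derive
\begin{equation*}
B^{\eta_-}(x,y)\le\liminf_{n\to\infty}\big(G_{x,x_n}-G_{y,x_n}\big)\le\limsup_{n\to\infty}\big(G_{x,x_n}-G_{y,x_n}\big)\le B^{\eta_+}(x,y)
\end{equation*}
for $\eta_\pm$ straddling $\xi$ along the boundary of $\Uset_\xi$. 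The differentiability of $g$ at $\xi$ forces $\E[B^{\eta_\pm}(0,e_i)]\to\partial_i g(\xi)$ as $\eta_\pm\to\xi$, and the cocycle monotonicity will collapse the sandwich to yield the claimed a.s.\ limit and the $L^1$ convergence (via uniform integrability from the $(2+\e)$-moment hypothesis).

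The final assertion, $B^\xi\equiv B^\zeta$ when $\Uset_\xi=\Uset_\zeta$, follows because on a flat face of $g$ the gradient is constant, so the two cocycles share mean increments, and uniqueness of the queueing fixed point with prescribed mean then yields the a.s.\ identification. The main obstacle I expect is the sandwich step: one needs to establish the stochastic monotonicity of the family $\{B^\eta\}$ in $\eta$ (which demands a careful ordering theory for the queueing fixed points, using both $\omega_0\ge c$ and $\E[|\omega_0|^{2+\e}]<\infty$), and one needs to control the boundary correction in the modified LPP uniformly over \emph{all} sequences $x_n$ asymptotically directed into $\Uset_\xi$, not merely those with $x_n/n\to\xi$. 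Once monotonicity and this uniformity are in hand, the rest of the argument is a fairly standard variational/tangency computation.
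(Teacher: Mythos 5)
Your plan is essentially the paper's own argument: Busemann candidates are built as stationary recovering cocycles from queueing fixed points tuned so that $\E[B^\zeta(0,e_i)]=e_i\cdot\nabla g(\zeta)$, the gradients $G_{x,x_n}-G_{x+e_i,x_n}$ are sandwiched between monotone-coupled cocycles $B^{\eta_\pm}$ at directions straddling $\Uset_\xi$, differentiability of $g$ collapses the sandwich, the second line of \eqref{B-g} is Euler's identity, and equality of gradients on a flat face plus uniqueness of the fixed point with prescribed mean gives $B^\xi\equiv B^\zeta$. The only deviation is cosmetic—you implement the comparison via stationary boundary data in a modified LPP, whereas the survey sketches it with a path-crossing monotonicity argument in the spirit of Lemma \ref{lm:monotone}—so there is nothing substantive to add.
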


(It is customary in probability theory to drop the dependence on $\w$ from the notation of random variables, e.g.\ to write $B^\xi(x,y)$ instead of $B^\xi(\w,x,y)$.)

The distribution of $B^\xi(0,e_i)$, $i\in\{1,2\}$, is known in the solvable cases. Formula \eqref{shape:exp} then follows from the second equation in \eqref{B-g}. See the end of Section \ref{fixed-pt} for more details.

The condition that weights are bounded below by a deterministic constant is not really necessary. We assumed it in \cite{Geo-Ras-Sep-17-ptrf-1} because we used queuing theory for the proof, as we will see in Section \ref{fixed-pt},
and then weights $\w_x$ are service times and have to be nonnegative. The extension to weights that are bounded below is immediate. However, the math in the proof works just as well for general weights, even though then interpreting them as
service times would not make sense.

The differentiability assumption is more serious. Although still an open question, differentiability is believed to be generally true. It can be directly verified from the explicit formula, when the weights are either exponentially or geometrically distributed. 
In the case when a flat segment  occurs (see Theorem \ref{flat:thm}), it is known that $g$ is differentiable at the two edges of the segment. 
See \cite{Auf-Dam-13} for the standard first-passage percolation and  \cite{Geo-Ras-Sep-17-ptrf-1} for the directed LPP.
It is worthy to note that when $\{\w_x\}$ are only ergodic (a generalization of being i.i.d.), the limiting shape can  have corners and linear segments, and can even be a polygon with finitely many edges. See \cite{Hag-Mee-95}.\medskip

Limits $B^\xi$ are called  
\index{Busemann function}%
{\sl Busemann functions}. This name is borrowed from metric geometry due to a connection between Busemann functions and geodesics, which is revealed in Section \ref{geodesics}.

The first equation in \eqref{B-g} can be understood as follows: $B^\xi$ is a (microscopic) gradient of passage times to ``far away points in direction $\xi$''. The shape function at $\xi$ is the 
large scale (macroscopic) limit of passage times to these far away points. \eqref{B-g}  says that the mean of the microscopic gradient is exactly the macroscopic gradient. 

The second equation in \eqref{B-g} is simply a consequence of the first one since differentiating $g(t\xi)=tg(\xi)$ in $t$ gives
	\begin{align}\label{euler}
	g(\xi)=\xi\cdot \nabla g(\xi).
	\end{align}
	
Let us record right away a few important properties of processes $B^\xi$.\medskip 

\index{cocycle}%
{\bf 1. Cocycle:}
	\begin{align}\label{cocycle}
	B^\xi(x,y)+B^\xi(y,z)=B^\xi(x,y),\quad\text{almost surely and for all }x,y,z\in\Z^2.
	\end{align}
It follows  immediately from \eqref{B-limit}.\medskip 

\index{recovery}%
{\bf 2. Recovery:}
	\begin{align}\label{recovery}
	\w_x=\min(B^\xi(x,x+e_1),B^\xi(x,x+e_2))\quad\text{almost surely and for all }x\in\Z^2.
	\end{align}
To see this holds start with an induction equation similar to \eqref{G:ind} (but recall that passage times are now defined by \eqref{G:def2}):
	\[G_{x,y}=\w_x+\max(G_{x+e_1,y},G_{x+e_2,y})\quad\text{for all $x$ and $y\ge x+e_i$, $i\in\{1,2\}$}.\]
Rewrite this as
	\begin{align}\label{G:ind2}
	\w_x=\min(G_{x,y}-G_{x+e_1,y},G_{x,y}-G_{x+e_2,y}).
	\end{align}
Now set $y=\fl{n\xi}$ and take $n\to\infty$.\medskip

{\bf 3. Stationarity:} To express this, define the group action of $\Z^2$ on $\Omega=\R^{\Z^2}$ that consists of shifting the weights: for $z\in\Z^2$ and $\w\in\Omega$, $T_z\w\in\Omega$ is such that $(T_z\w)_x=\w_{x+z}$. In words, $T_z\w$ is simply the weight configuration obtained from $\w$ by placing the origin at $z$.
Then we have
	\begin{align}\label{shift}
	B^\xi(T_z\w,x,y)=B^\xi(\w,x+z,y+z)\quad\text{almost surely and for all }x,y,z\in\Z^2.
	\end{align}
This follows directly from \eqref{B-limit} and the fact that $G_{x,y}(T_z\w)=G_{x+z,y+z}(\w)$. (All these equations are just saying is that fixing the lattice and shifting the weight configuration is the same thing as fixing the weight configuration and shifting the lattice.)
\medskip

{\bf 4. Monotonicity:} We record the last property as a lemma.

\begin{lemma}\label{lm:monotone}
{\rm\cite{Geo-Ras-Sep-17-ptrf-1}}
Make the same assumptions as in Theorem \ref{Bus:thm}. Fix $\xi,\zeta\in\riUset$ with $\xi_1<\zeta_1$. Then we have almost surely and for all $x\in\Z^2$
	\begin{align}\label{monotone}
	B^\xi(x,x+e_1)\ge B^\zeta(x,x+e_1)\quad\text{and}\quad B^\xi(x,x+e_2)\le B^\zeta(x,x+e_2).
	\end{align}
\end{lemma}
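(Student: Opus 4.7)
The plan is to prove the monotonicity at the pre-limit level, comparing the increments $G_{x,v} - G_{x+e_1,v}$ as the endpoint $v$ is moved, and then to pass to the limit via Theorem \ref{Bus:thm}. The backbone is a pair of comparison inequalities for passage times: for every $v \ge x+e_1$,
\begin{equation}\label{ineq-a}
G_{x,v+e_1} + G_{x+e_1,v} \le G_{x,v} + G_{x+e_1,v+e_1},
\end{equation}
\begin{equation}\label{ineq-b}
G_{x,v+e_2} + G_{x+e_1,v} \ge G_{x,v} + G_{x+e_1,v+e_2}.
\end{equation}

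Both are proved by a geodesic-crossing argument. For \eqref{ineq-a}, fix geodesics $\pi$ from $x$ to $v+e_1$ and $\pi'$ from $x+e_1$ to $v$. Label each vertex $p$ by its antidiagonal level $t = p\cdot(e_1+e_2)$ and let $a_t,a'_t$ denote the first coordinates of $\pi$ and $\pi'$ at antidiagonal $t$; both are non-decreasing integer sequences whose single-step increments lie in $\{0,1\}$. On the common range $t \in [x_1+x_2+1,\, v_1+v_2]$ one verifies $a_t \le x_1+1 = a'_t$ at the left endpoint and $a_t \ge v_1 = a'_t$ at the right endpoint, so by a discrete intermediate-value principle the two sequences must coincide at some intermediate $t$, i.e., $\pi$ and $\pi'$ share a common lattice vertex $w$. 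Swapping the post-$w$ pieces yields admissible up-right paths $x \to w \to v$ and $x+e_1 \to w \to v+e_1$ whose combined weight equals $G_{x,v+e_1} + G_{x+e_1,v}$; by admissibility this sum is at most $G_{x,v} + G_{x+e_1,v+e_1}$, giving \eqref{ineq-a}. The identical argument applied to the pair $(x\to v,\ x+e_1\to v+e_2)$ gives \eqref{ineq-b}. Rearranged, \eqref{ineq-a} says that $v \mapsto G_{x,v} - G_{x+e_1,v}$ is non-increasing in $v\cdot e_1$ and \eqref{ineq-b} says it is non-decreasing in $v\cdot e_2$.

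To exploit these, I would take $x_n = \fl{n\xi}$ and $y_n = \fl{n\zeta}$, both asymptotically directed into $\Uset_\xi$ and $\Uset_\zeta$ respectively. Because $\xi_1 < \zeta_1$ and therefore $\xi_2 > \zeta_2$, for all large $n$ we have $y_n\cdot e_1 > x_n\cdot e_1$ and $y_n\cdot e_2 < x_n\cdot e_2$. Introduce the corner $w_n = (y_n\cdot e_1)\,e_1 + (x_n\cdot e_2)\,e_2$, which dominates both $x_n$ and $y_n$ coordinatewise. Iterating \eqref{ineq-a} along the $e_1$-segment from $x_n$ to $w_n$, and iterating \eqref{ineq-b} along the $e_2$-segment from $y_n$ to $w_n$, I can chain the two monotonicities to get
\[
G_{x,y_n} - G_{x+e_1,y_n} \;\le\; G_{x,w_n} - G_{x+e_1,w_n} \;\le\; G_{x,x_n} - G_{x+e_1,x_n}.
\]
Sending $n\to\infty$ and applying Theorem \ref{Bus:thm} to both sides produces $B^\zeta(x,x+e_1) \le B^\xi(x,x+e_1)$, which is the first inequality in \eqref{monotone}. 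The second inequality is obtained by running the entire argument with the roles of $e_1$ and $e_2$ interchanged: the analogues of \eqref{ineq-a}--\eqref{ineq-b} for the $e_2$-increment combine in the opposite direction and yield $B^\xi(x,x+e_2) \le B^\zeta(x,x+e_2)$.

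The main obstacle is the geodesic-crossing sublemma. The antidiagonal argument is robust to non-uniqueness of geodesics (any measurable selection of $\pi,\pi'$ works), but one must still attend to boundary cases where $v$ is only slightly larger than $x+e_1$ and some intermediate passage times degenerate. Everything else is routine: the corner interpolation through $w_n$ is the natural way to move from a sequence in direction $\xi$ to one in direction $\zeta$ despite the two endpoints being incomparable, and the a.s.\ passage to the limit is exactly what Theorem \ref{Bus:thm} supplies.
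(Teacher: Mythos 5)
Your proof is correct and is essentially the paper's argument: both rest on the planar geodesic-crossing trick to compare the pre-limit gradients $G_{x,\cdot}-G_{x+e_i,\cdot}$ at two distant endpoints and then pass to the limit using Theorem \ref{Bus:thm}. The differences are only in packaging: the paper proves a single crossing inequality \eqref{crossing} for endpoints on a common antidiagonal (using superadditivity plus additivity along geodesics at the crossing point, which is the same as your tail-swap), whereas you prove one-step endpoint comparisons and chain them through the corner $w_n$; your discrete intermediate-value justification that the two geodesics must share a vertex is a welcome extra detail that the paper leaves implicit.
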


\begin{proof}
The claim follows from a monotonicity of the passage times $G_{x,y}$ themselves: for all $x\in\Z^2$ and $y,z\in\Z^2_+$ such that $x+e_1+e_2\le y$, $x+e_1+e_2\le z$, $\abs{y}_1=\abs{z}_1$, and $y\cdot e_1<z\cdot e_1$ we have
	\begin{align}\label{crossing}
	G_{x,y}-G_{x+e_1,y}\ge G_{x,z}-G_{x+e_1,z}\quad\text{and}\quad G_{x,y}-G_{x+e_2,y}\le G_{x,z}-G_{x+e_2,z}.
	\end{align}
Indeed, once this is proved set $y=y_n$ and $z=z_n$ with $y_n,z_n\in\Z_+^2$ any two sequences with $\abs{y_n}_1=\abs{z_n}_1=n$, $y_n/n\to\xi$, and $z_n/n\to\zeta$, then send $n\to\infty$. 

Inequalities \eqref{crossing} are due to paths crossing. Indeed, a geodesic from $x$ to $z$ must cross a geodesic from $x+e_1$ to $y$. Let $u$ be the first point where the two paths cross, i.e.\ $u\cdot(e_1+e_2)$ is the smallest possible.
See Figure \ref{monotone:fig}. 

\begin{figure}
	\begin{center}
		\begin{tikzpicture}[ >=latex, scale=0.8]

			\draw[line width=1pt, sussexp](1.08,0)--(1.08,0.91)--(6,0.91)--(6,1.91)--(7,1.91)--(7,3);
			\draw[line width=1pt, sussexp](0.09,0)--(0.09,1.91)--(5,1.91)--(5,3)--(8,3);

			\draw[line width=1pt, sussexg](1,0)--(1,1)--(3,1)--(3,2)--(4,2)--(4,4)--(6,4)--(6,5);
			\draw[line width=1pt, sussexg](0,0)--(0,2)--(2,2)--(2,3)--(4,3);
			
			\draw[fill =black](0.04,0)circle(1.2mm);
			\draw(0.04,-0.4)node{$x$};
			\draw[fill =black](1.04,0)circle(1.2mm);
			\draw(1.4,-0.4)node{$x+e_1$};
			\draw[fill =black](8,3)circle(1.2mm);
			\draw(8.3,3.3)node{$z$};
			\draw[fill =black](6,5)circle(1.2mm);
			\draw(6.35,5.35)node{$y$};
			
			\draw[fill =red](3,1.95)circle(1.2mm);
			\draw(3,2.35)node{$u$};

		\end{tikzpicture}
	\end{center}
	\caption{\small  The paths crossing trick: a path from $x$ to $z$ must cross a path from $x+e_1$ to $y$.}
	\label{monotone:fig}
\end{figure}

Then superadditivity implies that
	\[G_{x,u}+G_{u,y} \le G_{x,y}\quad\text{and}\quad G_{x+e_1,u}+G_{u,z} \le G_{x+e_1,z}.\]
Add the two inequalities and rearrange to get
	\[G_{x,y}-G_{x+e_1,u}-G_{u,y}  \ge G_{x,u}+G_{u,z}-G_{x+e_1,z}.\]
Use the fact that $u$ is on geodesics to add the passage times and get the desired inequality:
	\[G_{x,y}-G_{x+e_1,y}=G_{x,y}-G_{x+e_1,u}-G_{u,y}  \ge G_{x,u}+G_{u,z}-G_{x+e_1,z}=G_{x,z} - G_{x+e_1,z}.\]
A similar proof works for the $e_2$-gradients and as we showed above, the claim of the theorem follows.
This ``crossing trick'' has been used profitably in planar percolation, and goes back at least to \cite{Alm-98, Alm-Wie-99}.
\end{proof}

\section{Queuing fixed points}\label{fixed-pt}

We now sketch how Theorem \ref{Bus:thm} is proved. By adding a constant to the weights, if necessary, we can assume they are nonnegative. Then we can use the queuing terminology. Let us note one small modification, though.
Due to our new definition \eqref{G:def2}, which replaced \eqref{G:def}, now $G_{0,(k,\ell)}$ is the time
when customer $k$ enters service at station $\ell$ and $G_{0,(k,\ell)}+\w_{k,\ell}$, is the time when
customer $k$ departs station $\ell$ and joins the end of the queue at station $\ell+1$.

We are looking for stationary versions of the queuing process described in Section \ref{queues}. For this, customers need to have been arriving for a long time. Thus, indexing them by $\N$ will not do and we need
instead to have customers indexed by $\Z$. Furthermore, one cannot track the customer's arrival times at the queues (since the process started in the far past) and the right thing to do is to consider 
\index{inter-arrival}%
{\sl inter-arrival} times.

Thus, the development begins with a processes $\{A_{n,0}:n\in\Z\}$ that records  the time between the arrival of customers number $n$ and $n+1$ to queue $0$. This process is supposed to be stationary,
i.e.\ the distribution of $\{A_{n+m,0}:n\in\Z\}$ does not depend on $m\in\Z$. We also assume it is 
\index{ergodic}%
{\sl ergodic}. (Stationary measures form a convex set whose extreme points are said to be ergodic. A process is ergodic 
if its distribution is ergodic.) 
One special case
is a constant inter-arrival process: $A_{n,0}=\alpha$ for all $n\in\Z$ and some $\alpha\in\R$.

We are also given service times $\{S_{n,k}:n\in\Z,k\in\Z_+\}$. They represent the time it takes to serve customer $n$ at station $k$, once the customer is first in line at that station. 
These service times have the same joint distribution $\P$ as the weights in our directed LPP model. In particular, they are i.i.d. 
Service times are also independent of the inter-arrival process.
In order for the system to be stable, we need to have customers served faster than they arrive.
Hence, we require that 
	\begin{align}\label{m<A}
	m_0=\E[S_{0,0}]<\E[A_{0,0}]<\infty.
	\end{align}

Given the inter-arrival and service times define waiting times at station $0$ by
	\begin{align}\label{W:def}
	W_{n,0}=\Big(\sup_{j\le n-1}\sum_{i=j}^{n-1}(S_{i,0}-A_{i,0})\Big)^+.
	\end{align}
$W_{n,0}$ is the time customer $n$ waits at station $0$ before their service starts. See further down for an explanation.

Process $\{S_{i,0}-A_{i,0}:i\in\Z\}$ is ergodic. This is because one can show that the product measure of an ergodic process with an i.i.d.\ one is ergodic.
By the ergodic theorem the sample mean $(n-j)^{-1}\sum_{i=j}^{n-1}(S_{i,0}-A_{i,0})$ converges to the population mean
$\E[S_{0,0}-A_{0,0}]$, which by \eqref{m<A} is negative. Therefore, almost surely, as $j\to-\infty$ the sum $\sum_{i=j}^{n-1}(S_{i,0}-A_{i,0})$ goes to $-\infty$ and $0\le W_{n,0}<\infty$ for all $n\in\Z$. It is immediate to check that these times satisfy Lindley's equation
	\begin{align}\label{ind1}
	W_{n+1,0}=(W_{n,0}+S_{n,0}-A_{n,0})^+.
	\end{align}
This now explains why $W_{n,0}$ is the time customer $n$ waits at station $0$ before their service starts. Indeed, if $W_{n,0}+S_{n,0}<A_{n,0}$ then customer $n$ will leave station $0$ before the next customer $n+1$ arrives.
As a result, customer $n+1$ does not wait and $W_{n+1,0}=0$. If, on the other hand, $W_{n,0}+S_{n,0}\ge A_{n,0}$, then customer $n+1$ waits time $W_{n+1,0}=W_{n,0}+S_{n,0}-A_{n,0}$ before  service begins.

Inter-departure times from queue $0$ or, equivalently, inter-arrival times at queue $1$ are given by
	\begin{align}\label{ind2}
	A_{n,1}=(W_{n,0}+S_{n,0}-A_{n,0})^-+S_{n+1,0}.
	\end{align}
Again, if $W_{n+1,0}>0$ then customer $n+1$ is already waiting and will start being serviced as soon as customer $n$ departs.
The time between the departure of customer $n$ and that of customer $n+1$, from station $0$, is then equal to $S_{n+1,0}$.
In the other case, when $W_{n+1,0}=0$, station $0$ is empty before customer $n+1$ gets there, for exactly time $A_{n,0}-S_{n,0}-W_{n,0}$. The time between the departures of customers
$n$ and $n+1$ from queue $0$ equals this idle time, plus the service time $S_{n+1,0}$.

From \eqref{ind1} and \eqref{ind2} we have 
	\[W_{n+1,0}+S_{n+1,0}+A_{n,0}=W_{n,0}+S_{n,0}+A_{n,1}\]
for all $n\in\Z$. This in turn gives
	\[\sum_{m=0}^{n-1} A_{m,0}+S_{n,0}-S_{0,0}+W_{n,0}-W_{0,0}=\sum_{m=0}^{n-1}A_{m,1}.\]
Process $\{A_{n,1}:n\in\Z\}$ is again ergodic. Hence, dividing by $n$ and taking it to $\infty$ the ergodic theorem tells us that $n^{-1}\sum_{m=0}^{n-1} A_{m,0}$ and $n^{-1}\sum_{m=0}^{n-1}A_{m,1}$ converge to $\E[A_{0,0}]$ and $\E[A_{0,1}]$, respectively.
Since $S_{n,0}$ is a stationary process, $S_{n,0}/n$ converges to $0$.   The next lemma shows that also $W_{n,0}/n\to0$ almost surely. Consequently, we have that $\E[A_{0,1}]=\E[A_{0,0}]$.

\begin{lemma}
We have $n^{-1}W_{n,0}\to0$ almost surely, as $n\to\infty$.
\end{lemma}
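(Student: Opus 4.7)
The plan is to rewrite $W_{n,0}$ in random-walk terms and reduce the problem to a uniform strengthening of the strong law. Set $Y_i = A_{i,0}-S_{i,0}$; then $\{Y_i\}_{i\in\Z}$ is stationary and ergodic (a product of an ergodic process with an independent i.i.d.\ process is ergodic, as noted just before the lemma), and $c:=\E[Y_0]=\E[A_{0,0}]-m_0>0$ by \eqref{m<A}. Form the two-sided walk $T_0=0$, $T_k-T_{k-1}=Y_{k-1}$. Since $\sum_{i=j}^{n-1}(S_{i,0}-A_{i,0})=T_j-T_n$, and including $j=n$ absorbs the positive part (as $T_n-T_n=0$), definition \eqref{W:def} becomes
\[
W_{n,0}=\sup_{j\le n}T_j-T_n.
\]

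First I would argue that for large $n$ only indices $0\le j\le n$ contribute to the supremum. Birkhoff's ergodic theorem on each half-line gives $T_k/k\to c$ as $k\to\pm\infty$, so $T_k\to+\infty$ at $+\infty$ and $T_k\to-\infty$ at $-\infty$; in particular $M:=\sup_{j\le 0}T_j$ is finite almost surely. Since $T_n\to+\infty$, there is a random $n_0$ such that $\max_{0\le j\le n}T_j\ge T_n>M$ for all $n\ge n_0$, and therefore $W_{n,0}=\max_{0\le j\le n}T_j-T_n$ for such $n$.

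Next I would exploit the positive drift to cancel the leading behaviour. Writing $T_j-T_n=(T_j-jc)-(T_n-nc)+(j-n)c$ and using $(j-n)c\le 0$ for $j\le n$,
\[
W_{n,0}\le \max_{0\le j\le n}(T_j-jc)-(T_n-nc)\le 2\max_{0\le j\le n}|T_j-jc|.
\]
The right-hand side divided by $n$ tends to $0$ by the standard upgrade of $T_j/j\to c$ to its running-maximum form: given $\epsilon>0$, pick a random $N$ with $|T_j-jc|<\epsilon j$ for all $j\ge N$; then $n^{-1}\max_{0\le j\le n}|T_j-jc|\le \epsilon+O(1/n)$. This forces $n^{-1}W_{n,0}\to 0$ almost surely.

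The main conceptual hurdle is not the SLLN step, which is routine once the setup is in place, but the reduction from a supremum over all $j\le n$ to one over $0\le j\le n$: it is precisely the strict inequality in \eqref{m<A}, equivalently $c>0$, that decouples the present from the remote past and makes $M$ a harmless constant. Without this cushion the two-sided stationarity would give only that $W_{n,0}$ has the same distribution as $W_{0,0}$, which is too weak to rule out large fluctuations along subsequences and thus insufficient for almost-sure convergence.
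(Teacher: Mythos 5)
Your proof is correct, and it takes a genuinely different route from the paper's. You work directly with the definition \eqref{W:def}: writing $T$ for the two-sided walk with increments $A_{i,0}-S_{i,0}$, you represent $W_{n,0}=\sup_{j\le n}T_j-T_n$, use the positive drift $c=\E[A_{0,0}]-m_0>0$ (and the backward ergodic theorem, which the paper itself already invokes just before the lemma to get $W_{n,0}<\infty$) to discard the contribution of $j\le 0$, and then bound $W_{n,0}\le 2\max_{0\le j\le n}|T_j-jc|=o(n)$ by the running-maximum upgrade of Birkhoff's theorem. The paper instead never touches the two-sided supremum again after establishing finiteness: it introduces an auxiliary Lindley recursion $W^\e_n(a)$ whose increments are recentred to have mean $\e>0$, shows by a monotone comparison that $W_{n,0}\le W^\e_n(W_{0,0})$, proves $n^{-1}W^\e_n(a)\to\e$ (because the perturbed recursion eventually stops hitting zero and becomes a plain ergodic sum), and lets $\e\downarrow 0$. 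Your argument is more direct and makes transparent exactly where the strict inequality \eqref{m<A} enters (it decouples the present from the remote past), while the paper's $\e$-perturbation/domination device works purely from the recursion \eqref{ind1} and the finiteness of $W_{0,0}$, a technique that transfers to situations where an explicit supremum formula is less convenient. Both proofs ultimately rest only on the ergodic theorem, so neither requires more hypotheses than the other.
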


\begin{proof}
Let $U_n=S_{n,0}-A_{n,0}$. Fix $\e>0$, $a\ge0$, and let $u_0=\E[U_n]<0$. ($u_0$ does not depend on $n$ because $U_n$ is stationary.)
Set $W^\e_0(a)=a$ and define inductively $W_{n+1}^\e(a)=(W_n^\e(a)+U_n-u_0+\e)^+$, $n\ge0$. By induction
	\[W_n^\e(0)=\Bigl(\max_{0\le m<n}\sum_{k=m}^{n-1}(U_k-u_0+\e)\Big)^+.\]
Also, since the induction preserves monotonicity we have that 
	\[W_n^\e(a)\ge W^\e_n(0)\ge \sum_{k=0}^{n-1}(U_k-u_0+\e).\]
Since $\E[U_k-u_0+\e]=\e>0$, the ergodic theorem tells us the last sum grows to infinity (linearly in $n$). Consequently, there exists an $n_0$ such that $W_n^\e(a)>0$ for $n\ge n_0$.
But then if $n\ge n_0$ we have $W_n^\e(a)=W_{n-1}^\e(a)+U_{n-1}-u_0+\e$ and thus
	\[n^{-1}W_n^\e(a)=n^{-1}W_{n_0}^\e(a)+n^{-1}\sum_{k=n_0}^{n-1}(U_k-u_0+\e).\]
The ergodic theorem again tells us that the last term	converges to $\e$ as $n\to\infty$. As a result, we have shown that  with probability one for any $a\ge0$ we have $n^{-1}W_n^\e(a)\to0$.

Recall now that $W_{0,0}<\infty$ and observe that $W_0^\w(W_{0,0})=W_{0,0}$. Induction then shows that $W_{n,0}\le  W_n^\e(W_{0,0})$ for all $n\ge0$. Indeed,
	\begin{align*}
	W_{n+1,0}&=(W_{n,0}+U_n)^+\le(W_n^\e(W_{0,0})+U_n)^+\\
	&\le(W_n^\e(W_{0,0})+U_n-u_0+\e)^+=W_{n+1}^\e(W_{0,0}).
	\end{align*}
But then 
	\[0\le\varliminf_{n\to\infty}n^{-1}W_{n,0}\le\varlimsup_{n\to\infty}n^{-1}W_{n,0}\le\lim_{n\to\infty}n^{-1}W_n^\e(W_{0,0})=\e.\]
Taking $\e\to0$ completes the proof.
\end{proof}

Note that $A_{n,1}$ only used values $\{S_{m,0}:m\in\Z\}$ and is therefore independent of $\{S_{m,k}:m\in\Z,k\ge1\}$.
We can now repeat the above steps inductively: Say we already computed the (ergodic) inter-arrival process $\{A_{n,k}:n\in\Z\}$ at queue $k\ge0$ and that it is independent of the service times $\{S_{n,\ell}:n\in\Z,\,\ell\ge k\}$.
Say also that $\E[A_{0,k}]=\E[A_{0,0}]$. Then we define the waiting times
	\begin{align}\label{W:def_k}
	W_{n,k}=\Big(\sup_{j\le n-1}\sum_{i=j}^{n-1}(S_{i,k}-A_{i,k})\Big)^+,
	\end{align}
which satisfy 
	\begin{align}\label{lindley}
	W_{n+1,k}=(W_{n,k}+S_{n,k}-A_{n,k})^+.
	\end{align}
The inter-arrival process at queue $k+1$ is given by
	\begin{align}\label{A-ind}
	A_{n,k+1}=(W_{n,k}+S_{n,k}-A_{n,k})^-+S_{n+1,k}.
	\end{align}
It is ergodic and has the same mean as $A_{n,k}$ (and thus as $A_{n,0}$). It is also independent of $\{S_{n,\ell}:n\in\Z,\,\ell\ge k+1\}$ and we can continue the inductive process.

Using \eqref{lindley} and \eqref{A-ind} we can check the conservation law
	\begin{align}\label{q-coc}
	W_{n+1,k}+S_{n+1,k}+A_{n,k}=W_{n,k}+S_{n,k}+A_{n,k+1}
	\end{align}
and the equation
	\begin{align}\label{q-rec}
	S_{n+1,k}=\min(W_{n+1,k}+S_{n+1,k},A_{n,k+1}).
	\end{align}

Times $W_{n,k}+S_{n,k}$ are called 
\index{work load}%
{\sl work load} of station $k$ by customer $n$. Let us assign values to the edges of $\Z\times\Z_+$: on the horizontal edge $(n,k)-(n+1,k)$ put weight $A_{n,k}$ and on vertical 
edge $(n,k)-(n,k+1)$ put weight $W_{n,k}+S_{n,k}$. Also, put weights $S_{n,k}$ on vertices $(n,k+1)$. 
Recall now \eqref{cocycle} and \eqref{recovery}. Then \eqref{q-coc} can be seen as a 
\index{cocycle}%
cocycle property and \eqref{q-rec} is a 
\index{recovery}%
recovery property (but in the south-west direction 
instead of the north-east direction). See Figure \ref{q-coc:fig}. This is where the connection to Busemann functions lies.

\begin{figure}
	\begin{center}
		\begin{tikzpicture}[ >=latex, scale=1]

			\draw[line width=1pt](0,0)--(4,0)--(4,4)--(0,4)--(0,0);
			
			\draw[fill =black](0,0)circle(1.5mm);
			\draw[fill =black](4,0)circle(1.5mm);
			\draw[fill =black](0,4)circle(1.5mm);
			\draw[fill =black](4,4)circle(1.5mm);
			
			\draw(4.5,4.5) node{$S_{n+1,k}$};   
			\draw(4.7,-0.5) node{$S_{n+1,k-1}$};  
			\draw(-0.3,4.5) node{$S_{n,k}$};  
			\draw(0,-0.5) node{$S_{n,k-1}$}; 

			\draw(2,4.5) node{$A_{n,k+1}$};
			\draw(2,-0.5) node{$A_{n,k}$};
			\draw(5.6,2) node{$W_{n+1,k}+S_{n+1,k}$};
			\draw(-1.3,2) node{$W_{n,k}+S_{n,k}$};

		\end{tikzpicture}
	\end{center}
	\caption{\small  Assignment of work loads, inter-arrival times, and service times, to  edges and vertices.}
	\label{q-coc:fig}
\end{figure}
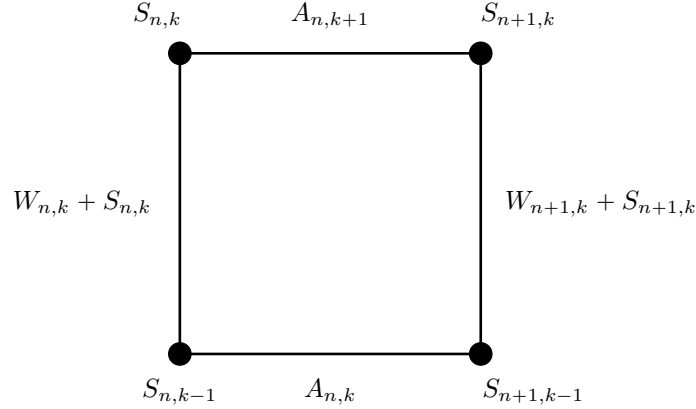

Observe that  system $\{A_{n,k},W_{n,k},S_{n,k}:n\in\Z,\,k\in\Z_+\}$ is invariant with respect to shifts in the first coordinate, i.e.\ 
the distribution of $\{A_{n+m,k},W_{n+m,k},S_{n+m,k}:n\in\Z,\,k\in\Z_+\}$ is the same for all $m\in\Z$. It is in fact also ergodic under shifts in the first coordinate.
However, it is not obvious at all (and in fact not true in general) that the system is  invariant (let alone ergodic) under shifts in the second coordinate. That is, we do not know a priori that
the distribution of $\{A_{n,k+\ell},W_{n,k+\ell},S_{n,k+\ell}:n\in\Z,\,k\in\Z_+\}$ is independent of $\ell\in\Z_+$.  For this to happen, clearly $\{A_{n,0}:n\in\Z\}$ needs to have some special distribution.

If we denote the distribution of $\{A_{n,0}:n\in\Z\}$ by $\nu$ then write $\Phi(\nu)$ for the distribution of $\{A_{n,1}:n\in\Z\}$.
This is the so-called  
\index{queuing operator}%
{\sl queuing operator}. It takes a ergodic probability measure on $\R^\Z$ and transforms it to another ergodic probability measure on $\R^\Z$, while preserving the value of the mean (recall that $\E[A_{n,1}]=\E[A_{n,0}]$).
Let $\Phi^k$ be the $k$-th iterate of $\Phi$, i.e.\ $\Phi^1=\Phi$ and $\Phi^{k+1}(\nu)=\Phi(\Phi^k(\nu))$.

For the  invariance under shifts in the second coordinate to hold, what  we need is  $\Phi(\nu)=\nu$, i.e.\ that $\nu$ be an ergodic 
\index{fixed point}%
{\sl fixed point} of the queuing operator.

Thus, the problem at hand is: Given $\alpha>m_0$ find ergodic measures $\nu_\alpha$ on $\R^\Z$ such that $\Phi(\nu_\alpha)=\nu_\alpha$. And it would be good if along the way we can also answer the question of uniqueness of such measures.

One way to produce fixed points with a prescribed mean $\alpha>m_0$ is to start for example with the measure $\delta_\alpha^\Z$, the distribution of the constant process $\{A_{n,0}=\alpha:n\in\Z\}$, and  
hope that $\Phi^k(\delta_\alpha^\Z)$ converges (weakly) to an ergodic fixed point that has mean $\alpha$, as $k\to\infty$. 

Mairesse and Prabhakar \cite{Mai-Pra-03} proved that for each $\alpha>m_0$ there exists a unique stationary fixed point $\nu_\alpha$ such that if one starts with any ergodic process $\nu$ with mean $\alpha$, 
then the C\'esaro mean $k^{-1}\sum_{\ell=1}^k\Phi^k(\nu)$ converges weakly to $\nu_\alpha$.
One consequence of this is that ergodic fixed points with a prescribed mean are unique.
It is an open question whether or not for each $\alpha>m_0$ probability measure $\nu_\alpha$ is ergodic. What is known, though, is that this is true if we have differentiability of the shape function $g$ for the LPP problem with weights distribution $\P$
(the distribution of the service times) \cite[Lemma 7.6(b)]{Geo-Ras-Sep-17-ptrf-1}.

For a fixed $\alpha>m_0$ let $\{A_{n,0}:n\in\Z\}$ have distribution $\nu_\alpha$ and let $\{S_{n,k}:n\in\Z,k\in\Z_+\}$ be i.i.d.\ with distribution $\P$, independent of $\{A_{n,0}\}$.
Construct the processes $\{W_{n,k},A_{n,k}:n\in\Z,k\in\Z_+\}$ by inductions \eqref{lindley} and \eqref{A-ind}. Then, because $\nu_\alpha$ is a fixed point, process $\{W_{0,k}:k\in\Z_+\}$ is stationary and can thus be extended
to a stationary process $\{W_{0,k}:k\in\Z\}$. (If this is not clear to the reader, it is an excellent exercise on a standard application of Kolmogorov's extension theorem.) Let
	\begin{align}\label{def:f}
	f(\alpha)=\E[W_{0,0}]+\E[S_{0,0}].
	\end{align}
Using the symmetry of the construction one can prove that the distribution of $\{W_{0,k}+S_{0,k}:k\in\Z\}$ is again a fixed point of the queuing operator and that it is ergodic if $\nu_\alpha$ is ergodic \cite[Lemma 7.7]{Geo-Ras-Sep-17-ptrf-1}.
In this case, this distribution is no other than $\nu_{f(\alpha)}$.  This tells us that not only the system $\{A_{n,k},W_{n,k},S_{n,k}:n\in\Z,k\in\Z_+\}$ is ergodic under shifting the $n$ coordinate, but it is also ergodic under shifts of the $k$ coordinate.

\begin{theorem}\label{f(alpha)}
Function $f$ takes values in $(m_0,\infty)$ and is a convex, continuous, and strictly decreasing involution {\rm(}i.e.\ $f(f(\alpha))=\alpha${\rm)}. Furthermore, $f(\alpha)$ converges to $\infty$ as $\alpha\searrow m_0$ at to $m_0$ as $\alpha\to\infty$.

Let $g$ be the shape function for the LPP problem with weight distribution $\P$ {\rm(}the distribution of the service times{\rm)}. Assume it is differentiable on $\riUset$.
Then for all $\xi\in\riUset$ we have
	\begin{align}\label{g-var}
	g(\xi)=\inf_{\alpha>m_0}\bigl(\alpha\xi_1+f(\alpha)\xi_2\bigr).
	\end{align}
The infimum is minimized at $\alpha=e_1\cdot\nabla g(\xi)$ and then $f(\alpha)=e_2\cdot\nabla g(\xi)$.
\end{theorem}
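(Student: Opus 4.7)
The plan has three strands: establish the analytic properties of $f$ from the queuing construction, use the cocycle/recovery structure to derive the variational formula \eqref{g-var}, and harvest convexity from that formula.

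For the analytic properties, positivity $f(\alpha)>m_0$ follows because $W_{0,0}=0$ almost surely would, by Lindley's equation \eqref{ind1} and ergodicity, force $S_{0,0}\le A_{0,0}$ almost surely and so $m_0\ge\alpha$, a contradiction. The limits at the endpoints come from standard stability theory for $G/G/1$ queues built into \eqref{W:def}: as $\alpha\searrow m_0$ the drift of the walk $\sum(S-A)$ vanishes and the supremum defining $W_{0,0}$ has infinite mean, while as $\alpha\to\infty$ the arrivals spread out and waiting times vanish in $L^1$. The involution property is the heart of the matter and comes from the $90^\circ$ symmetry of the edge labelling in Figure \ref{q-coc:fig}: rotating the picture exchanges horizontal inter-arrival edges with vertical work-load-plus-service edges and exchanges customers with stations, so the rotated system is a queuing system whose inter-arrival distribution is that of $\{W_{0,k}+S_{0,k}:k\in\Z\}$. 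The quoted Lemma~7.7 of \cite{Geo-Ras-Sep-17-ptrf-1} says this is an ergodic fixed point, and by the Mairesse--Prabhakar uniqueness result \cite{Mai-Pra-03} it must coincide with $\nu_{f(\alpha)}$. Running the construction in the rotated picture then gives $f(f(\alpha))=\alpha$. Monotonicity comes from a pathwise coupling via Lindley's recursion: increasing the inter-arrival process pointwise can only decrease waiting times, so $f$ is non-increasing, and strict monotonicity follows from the involution.

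For the variational formula, the crucial observation is that the edge labelling of Figure \ref{q-coc:fig} together with the conservation law \eqref{q-coc} defines a cocycle $F$ on $\Z\times\Z_+$, and the recovery relation \eqref{q-rec} forces the vertex service time $S_{n+1,k}$ to be the minimum of its two incoming edge weights of $F$. Consequently, for any up-right path from $(0,0)$ to $(k,\ell)$, the sum of vertex weights along the path is at most the telescoping cocycle increment $F((0,0),(k,\ell))$, so $G_{(0,0),(k,\ell)}\le F((0,0),(k,\ell))$ in the stationary coupling. By stationarity of the labels in both coordinates, $\E[F((0,0),(k,\ell))]=\alpha k+f(\alpha)\ell$, hence dividing by $n$ along $\fl{n\xi}$ and sending $n\to\infty$ yields $g(\xi)\le\alpha\xi_1+f(\alpha)\xi_2$ for every $\alpha>m_0$. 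For the reverse inequality I invoke Theorem \ref{Bus:thm}: given $\xi\in\riUset$, set $\alpha:=\E[B^\xi(0,e_1)]$ and $\beta:=\E[B^\xi(0,e_2)]$. The cocycle \eqref{cocycle}, recovery \eqref{recovery}, and stationarity \eqref{shift} properties of the Busemann functions exhibit $B^\xi(\cdot,\cdot+e_1)$ and $B^\xi(\cdot,\cdot+e_2)$ as the horizontal and vertical edge weights of an ergodic stationary queuing fixed point with inter-arrival mean $\alpha$; by Mairesse--Prabhakar uniqueness this forces $\alpha>m_0$ and $\beta=f(\alpha)$. The second identity in \eqref{B-g} then gives $g(\xi)=\alpha\xi_1+f(\alpha)\xi_2$, and the infimum is attained at this $\alpha=e_1\cdot\nabla g(\xi)$ with $f(\alpha)=e_2\cdot\nabla g(\xi)$.

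Convexity and continuity of $f$ drop out of \eqref{g-var}: the formula exhibits $t\mapsto g(t,1-t)$ as an infimum over $\alpha$ of affine functions with slope $\alpha-f(\alpha)$, which is the Legendre-dual relationship between $f$ and the concave function $g$ suitably parameterized. Since concavity of $g$ is preserved, $f$ is convex, and hence continuous on the open interval $(m_0,\infty)$. The main obstacle, in my view, is the lower bound in the variational formula: it requires identifying the Busemann cocycle of Theorem \ref{Bus:thm} as precisely the unique ergodic queuing fixed point of its prescribed mean, and this identification is what couples together the Mairesse--Prabhakar fixed-point theory with LPP Busemann theory. Once that link is made, all the structural properties of $f$ — involution, monotonicity, convexity, and the endpoint behavior — fall into place.
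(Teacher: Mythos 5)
Your argument for the attainment half of \eqref{g-var} is circular in this development. You set $\alpha=\E[B^\xi(0,e_1)]$ and identify the Busemann cocycle of Theorem \ref{Bus:thm} as an ergodic queuing fixed point, then use \eqref{B-g} to conclude $g(\xi)=\alpha\xi_1+f(\alpha)\xi_2$. But Theorem \ref{Bus:thm} is proved in this very section \emph{from} Theorem \ref{f(alpha)}: the construction of $B^\zeta$ uses precisely the identity $\E[B^\zeta(0,e_2)]=f(\alpha(\zeta))=e_2\cdot\nabla g(\zeta)$, which is part of what you are asked to prove, so neither \eqref{B-g} nor the Busemann/fixed-point identification is available here. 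The missing idea is to extract the duality directly from the queueing construction: Lindley's formula \eqref{W:def_k} iterates to
\begin{align*}
\sum_{k=0}^{n-1}(S_{0,k}+W_{0,k})=\sup_{j\le0}\Bigl(G_{je_1,ne_2}-\sum_{i=j}^{-1}A_{i,0}\Bigr),
\end{align*}
and dividing by $n$ and applying the ergodic theorem together with the shape theorem yields $f(\alpha)=\sup_{s\ge0}\bigl(g(s,1)-s\alpha\bigr)$. From this single Legendre identity everything follows without Busemann functions: convexity and continuity (a supremum of affine functions of $\alpha$), the range $m_0<f(\alpha)<\infty$ via Martin's asymptotics \eqref{martin}, the involution from the symmetry $g(x_1,x_2)=g(x_2,x_1)$, hence strict decrease and the endpoint limits, and then \eqref{g-var} by inverting the duality and using homogeneity and \eqref{euler}. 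Note also that your plan to deduce convexity of $f$ \emph{from} \eqref{g-var} runs the duality the wrong way: the inf formula only determines the convex hull of $f$, so convexity must come from the sup representation (or another argument), not from \eqref{g-var}.

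There are smaller but genuine defects as well. Your proof that $f(\alpha)>m_0$ has a sign error: $W_{0,0}=0$ a.s.\ forces $S_{i,0}\le A_{i,0}$ a.s., which gives $m_0\le\alpha$, not $m_0\ge\alpha$, so there is no contradiction; in fact strict positivity genuinely requires non-degeneracy of the weights and in the paper comes out of \eqref{martin}, so no soft Lindley argument can deliver it. The endpoint limits ``from standard $G/G/1$ stability'' are heuristics about the fixed point $\nu_\alpha$, not about a generic input; in the paper they are consequences of $f$ being a continuous, strictly decreasing involution of $(m_0,\infty)$. Your transposition argument for $f(f(\alpha))=\alpha$ is an attractive alternative in spirit, but it leans on identifying the law of $\{W_{0,k}+S_{0,k}\}$ with $\nu_{f(\alpha)}$ via Mairesse--Prabhakar uniqueness, which needs ergodicity of $\nu_\alpha$ --- an issue the paper flags as open in general and known only under differentiability of $g$, an assumption the first half of the theorem does not make --- plus a Loynes-type uniqueness for the stationary Lindley solution in the rotated picture; the symmetry-of-$g$ argument bypasses all of this. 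Your upper bound $g(\xi)\le\alpha\xi_1+f(\alpha)\xi_2$ via the cocycle-recovery comparison and the shape theorem is sound.
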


\begin{proof}
Let $G_{me_1,ne_2}$ denote the last passage time from $me_1$ to $ne_2$ using weights $\{S_{m,k}:m\in\Z,k\in\Z_+\}$. Equation \eqref{W:def} for $n=0$ can be rewritten as
	\[S_{0,0}+W_{0,0}=\sup_{j\le0}\Bigl(G_{je_1,e_2}-\sum_{i=j}^{-1}A_{i,0}\Big),\]
with the convention that an empty sum is zero.
We can extend this by induction to
	\[\sum_{k=0}^{n-1}(S_{0,k}+W_{0,k})=\sup_{j\le0}\Bigl(G_{je_1,ne_2}-\sum_{i=j}^{-1}A_{i,0}\Big).\]
Divide by $n$, take it to infinity, and use the ergodic theorem and the shape theorem (plus a bit of work similar to what we did in the proof of continuity of $g$) we get
	\[f(\alpha)=\sup_{s\ge0}\bigl(g(s,1)-s\alpha\bigr)\]
with a maximizer at $s$ such that $\alpha=e_1\cdot\nabla g(s,1)$.

The above formula shows that $f$ is nonincreasing and convex function. 
Martin's asymptotic formula \eqref{martin} (and homogeneity of $g$) implies that $m_0<f(\alpha)<\infty$ for $\alpha>m_0$. 
In particular, the convex function $f$ is continuous. The symmetry of $g$ implies that $f$ is an involution and is thus strictly decreasing. The limits claimed in the theorem follow. 

Inverting the above convex duality we get 
	\begin{align}\label{g-f}
	g(s,1)=\inf_{\alpha>m_0}\bigl(\alpha s+f(\alpha)\bigr)
	\end{align}
with a minimizer at $\alpha=e_1\cdot\nabla g(s,1)$. Variational formula \eqref{g-var} now comes by using homogeneity of $g$ to write $g(\xi)=\xi_2 g(\xi_1/\xi_2,1)$.
The minimizing $\alpha$ is given by $e_1\cdot g(\xi)$, as claimed, and then the fact that $f(\alpha)=e_2\cdot\nabla g(\xi)$ comes from \eqref{euler}.	
\end{proof}

The above construction of fixed points can be carried out simultaneously for any given countable set of parameters $\alpha>m_0$, thus coupling the fixed points $\nu_\alpha$.  A bit more precisely, fix a countable set $\cA_0\subset(m_0,\infty)$ 
and start with inter-arrival times $A^{(\alpha)}_{n,0}=\alpha$, $\alpha\in\cA_0$, $n\in\Z$, and service times $\{S_{n,k},n\in\Z,\,k\in\Z_+\}$ that are independent and have the same distribution as the weights of the LPP model.
(Note that the service times do not depend on $\alpha$.)
Use \eqref{W:def_k} and \eqref{A-ind} to define inductively $W^{(\alpha)}_{n,k}$ and $A^{(\alpha)}_{n,k}$ on all of $\Z\times\Z_+$.
Then, as $k\to\infty$ the C\'esaro mean of the distributions of $\{A^{(\alpha)}_{n,k}:n\in\Z,\,\alpha\in\cA_0\}$ converges weakly to a probability measure on $(\R^\Z)^{\cA_0}$
whose marginal for a fixed $\alpha\in\cA_0$ is exactly $\nu_\alpha$. \medskip

We can now go back to proving  existence of the limit \eqref{B-limit}. 

\begin{proof}[Proof of Theorem \ref{Bus:thm}]
The first step is to extract from the above queuing objects candidates for the limits $B^\xi$. Then we prove that the Busemann limits indeed exist and equal these candidates.

Recall that we assume $g$ is differentiable. 
For $\zeta\in\riUset$ define $\alpha(\zeta)=e_1\cdot\nabla g(\zeta)$. Then $\alpha$ is nonincreasing and continuous in $\zeta_1=\zeta\cdot e_1$. Also, it follows from \eqref{martin} that $\alpha(\zeta)\to m_0$ as $\zeta\to e_1$ and $\alpha(\zeta)\to\infty$ as $\zeta\to e_2$.
In particular,  $\alpha(\zeta)$ is always strictly bigger than $m_0=\E[\w_0]$.
Fix $\xi\in\riUset$ and let $\Uset_0=\{\xi\}\cup(\riUset\cap\Q^2)$. Let $\cA_0=\{\alpha(\zeta):\zeta\in\Uset_0\}$. This is a dense countable subset of $(m_0,\infty)$.

Let $\{A^{(\alpha)}_{n,0}:n\in\Z,\,\alpha\in\cA_0\}$ be distributed according to the above coupling of fixed points
and let $\{S_{n,k}:n\in\Z,\,k\in\Z_+\}$ be independent, with the same joint distribution $\P$ as the weights in the directed LPP model, and independent of the inter-arrival times.
Construct times $\{A_{n,k}^{(\alpha)},W_{n,k}^{(\alpha)}:n\in\Z,\,k\in\N,\,\alpha\in\cA_0\}$ using inductions \eqref{W:def_k} and \eqref{A-ind}.
Since it comes from fixed points, system $\{A^{(\alpha)}_{n,k},W^{(\alpha)}_{n,k},S_{n,k}:n\in\Z,\,k\in\Z_+,\alpha\in\cA_0\}$ is stationary under shifts in both coordinates. We can then extend it to 
a system $\{A^{(\alpha)}_{n,k},W^{(\alpha)}_{n,k},S_{n,k}:n\in\Z,\,k\in\Z,\,\alpha\in\cA_0\}$ on the whole lattice. 

For $\zeta\in\Uset_0$ define
	\begin{align*}
	&\w_{ne_1+ke_2}=S_{-n,-k-1},\quad B^\zeta(ne_1+ke_2,(n+1)e_1+ke_2)=A^{(\alpha(\zeta))}_{-n-1,-k},\quad \text{and}\\
	&B^\zeta(ne_1+ke_2,ne_1+(k+1)e_2)=W^{(\alpha(\zeta))}_{-n,-k-1}+S_{-n,-k-1}\,.
	\end{align*}
Then $\{\w_x:x\in\Z^2\}$ has distribution $\P$ and \eqref{q-rec} says that $B^\zeta$ satisfies the recovery property \eqref{recovery}. 
Equation \eqref{q-coc} says that $B^\zeta$ satisfies
	\begin{align}\label{B-cell}
	\begin{split}
	&B^\zeta(x,x+e_1)+B^\zeta(x+e_1,x+e_1+e_2)\\
	&\qquad\qquad\qquad=B^\zeta(x,x+e_2)+B^\zeta(x+e_2,x+e_1+e_2)
	\end{split}
	\end{align}
for all $x\in\Z^2$. Set $B^\zeta(x+e_i,x)=-B^\zeta(x,x+e_i)$, $i\in\{1,2\}$ and for $x,y\in\Z^2$ define
	\[B^\zeta(x,y)=\sum_{i=0}^{n-1}B^\zeta(x_i,x_{i+1}),\]
where $x_{0,n}$ is any nearest-neighbor path from $x_0=x$ to $x_n=y$. Thanks to \eqref{B-cell} this definition does not depend on the choice of the path $x_{0,n}$. 
Now, $B^\zeta$ is an $L^1$  cocycle and $\E[B^\zeta(0,e_1)]=\E[A^{(\alpha(\zeta))}_{0,0}]=\alpha(\zeta)$. 
This equality says that $B^\zeta$ satisfies the first equation in \eqref{B-g}, for the $e_1$ direction. The version for the $e_2$ direction comes from Theorem \ref{f(alpha)}: 
	\[\E[B^\zeta(0,e_2)]=\E[W^{\alpha(\zeta)}_{0,-1}+S_{0,-1}]=f(\alpha(\zeta))=e_2\cdot\nabla g(\zeta).\]
As was mentioned earlier, the second equation in \eqref{B-g} is simply a consequence of the first one and \eqref{euler}.

Observe next that if we start with two sequences of inter-arrival times $A_{n,0} \le A'_{n,0}$ for all $n\in\Z$, then \eqref{W:def} gives $W_{n,0}\ge W'_{n,0}$ for all $n\in\Z$. Then from \eqref{A-ind}  we get that 
$A_{n,1}\le A'_{n,1}$. This monotonicity of the queuing operator leads to a monotonicity in the coupling of the fixed points.
Combining this with the fact that if $\zeta,\eta\in\Uset_0$ are such that $\zeta_1<\eta_1$, then $\alpha(\zeta)= e_1\cdot\nabla g(\zeta)\ge e_1\cdot \nabla g(\eta)=\alpha(\eta)$, we get that the $B^\zeta$ cocycles 
we constructed satisfy monotonicity \eqref{monotone}.

The first equation in \eqref{B-g} and differentiability of $g$ imply that $\zeta\mapsto\E[B^\zeta(0,e_i)]$ is continuous, for $i\in\{1,2\}$.
Combined with the above monotonicity we get that with probability one 
	\begin{align}\label{B-cont}
	\lim_{\Uset_0\ni\zeta\to\xi} B^\zeta(x,x+e_i)=B^\xi(x,x+e_i),\quad i\in\{1,2\}.
	\end{align}
Note that the set of full $\P$-measure on which the above event holds depends on $\xi$. In fact, we will see in Corollary \ref{B-not-cont} that this continuity does not hold on all $\riUset$ simultaneously (i.e.\ with one null set thrown away).

Lastly,  by another monotonicity argument, not too different from the one we used in the proof of Lemma \ref{lm:monotone}, we can show that for $x\in\Z^2$, a sequence $x_n$ directed in $\Uset_\xi$,
directions $\zeta,\eta\in\Uset_0\setminus\Uset_\xi$ with $\zeta_1<\xi_1<\eta_1$, and a large integer $n$, we have the stochastic inequalities  
	\begin{align}\label{eq:comp}
	\begin{split}
	&B^\eta(x,x+e_1)\le G_{x,x_n}-G_{x+e_1,x_n}\le B^\zeta(x,x+e_1)\quad\text{and}\\
	&B^\eta(x,x+e_2)\ge G_{x,x_n}-G_{x+e_2,x_n}\ge B^\eta(x,x+e_2).
	\end{split}
	\end{align}
(A random variable $Y$ is said to be stochastically smaller than a random variable $Z$ if for all $a\in\R$ we have $P(Z\le a)\le P(Y\le a)$. If moreover one shows that $E[Z]=E[Y]$, then 
$Y$ and $Z$ have the same distribution.) Taking $n\to\infty$ then $\eta$ and $\zeta$ to $\xi$ and applying \eqref{B-cont} we get 
	\[B^\xi(x,x+e_i)=\lim_{n\to\infty}(G_{x,x_n}-G_{x+e_i,x_n}).\]
Then the cocycle property \eqref{B-cell} gives \eqref{B-limit}.

If $\Uset_\xi=\Uset_\zeta$ for some $\xi,\zeta\in\riUset$, then $\alpha(\xi)=e_1\cdot\nabla g(\xi)=e_1\cdot\nabla g(\zeta)=\alpha(\zeta)$ and thus $B^\xi=B^\zeta$.
The proof of Theorem \ref{Bus:thm} is complete.
\end{proof}

The next lemma develops the very first inequality in \eqref{eq:comp}, the others being similar. We focus on the case $x=0$, the general case coming from shift-invariance of $\P$.

\begin{lemma}
Fix $\xi\in\riUset$ and a {\rm(}possibly random{\rm)} sequence $x_n$ directed in $\Uset_\xi$.
Fix $\eta\in\riUset\setminus\Uset_\xi$ with $\xi_1<\eta_1$. Then almost surely for large $n$
	\[B^\eta(0,e_1)\le G_{0,x_n}-G_{e_1,x_n}.\]
\end{lemma}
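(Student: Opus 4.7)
The proof follows the two-step pattern of Lemma~\ref{lm:monotone}: a crossing-based reduction to a direction-$\eta$ endpoint, then a stationary-queue comparison on the coupled probability space from Section~\ref{fixed-pt}.

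For the reduction, I would use that $\eta\in\riUset\setminus\Uset_\xi$ together with $\xi_1<\eta_1$ forces the disjoint faces $\Uset_\xi$ and $\Uset_\eta$ to satisfy $\sup\{s\cdot e_1:s\in\Uset_\xi\}<\eta_1$, since $\Uset_\eta$ lies strictly to the right of $\Uset_\xi$ in the $e_1$-coordinate when $g$ is differentiable. Pick a deterministic sequence $z_n\in\Z_+^2$ with $\abs{z_n}_1=\abs{x_n}_1$ and $z_n/\abs{z_n}_1\to\eta$. Asymptotic directedness of $x_n$ into $\Uset_\xi$ then gives $x_n\cdot e_1<z_n\cdot e_1$ for all large $n$, almost surely. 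Applying the path-crossing inequality \eqref{crossing} with $x=0$, $y=x_n$, $z=z_n$ yields
\[
G_{0,x_n}-G_{e_1,x_n}\;\ge\;G_{0,z_n}-G_{e_1,z_n},
\]
so it suffices to prove $B^\eta(0,e_1)\le G_{0,z_n}-G_{e_1,z_n}$ almost surely for all large $n$.

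For the comparison, I would work on the coupled space from the proof of Theorem~\ref{Bus:thm}, where $B^\eta$ is built from the fixed-point inter-arrival process with distribution $\nu_{\alpha(\eta)}$ and the $\omega_x$ are the flipped service times. A Burke-type identity for a ``stationary'' LPP with boundary source weights $B^\eta(je_2,(j+1)e_2)$ on the $e_2$-axis, together with the cocycle property \eqref{B-cell} and the recovery property \eqref{recovery}, shows that the $e_1$-gradient of this augmented LPP equals $B^\eta(0,e_1)$ identically. A second path-crossing argument, analogous to the one used in Lemma~\ref{lm:monotone}, together with the monotonicity of the queueing operator $\Phi$ (the same input that drove Lemma~\ref{lm:monotone} via its monotone coupling of fixed points), then produces a pathwise lower bound on the bulk $e_1$-gradient $G_{0,z_n}-G_{e_1,z_n}$ by this stationary gradient, once $z_n/\abs{z_n}_1$ is sufficiently close to $\eta$. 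Because $\E[B^\eta(0,e_1)]=\alpha(\eta)=e_1\cdot\nabla g(\eta)$ by \eqref{B-g}, this is consistent with the shape-theorem prediction for $\E[G_{0,z_n}-G_{e_1,z_n}]$ and anchors the inequality asymptotically.

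The main obstacle is the second step: upgrading the Burke-type identity on the coupled space into a pathwise (not merely distributional), finite-$n$ comparison between $B^\eta(0,e_1)$ and $G_{0,z_n}-G_{e_1,z_n}$. This is the technical heart of the argument and depends essentially on the monotonicity of $\Phi$ and the ergodicity of $\nu_{\alpha(\eta)}$, which together propagate the correct ordering from the stationary boundary all the way into the bulk LPP gradient along the coupling as $z_n$ moves off in direction $\eta$.
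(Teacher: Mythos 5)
Your step 1 is where the argument breaks down. The crossing inequality \eqref{crossing} does give $G_{0,x_n}-G_{e_1,x_n}\ge G_{0,z_n}-G_{e_1,z_n}$ for a sequence $z_n$ with $\abs{z_n}_1=\abs{x_n}_1$ and $z_n/\abs{z_n}_1\to\eta$, but this reduction discards exactly the hypothesis that makes the lemma provable: the strict separation between the direction set $\Uset_\xi$ of the target and $\eta$. After the reduction you must show $B^\eta(0,e_1)\le G_{0,z_n}-G_{e_1,z_n}$ for all large $n$ with $z_n$ aimed in the characteristic direction $\eta$ of $B^\eta$ itself. That is the borderline case in which no exit-point/crossing comparison with a stationary model yields a sign: the Busemann limit \eqref{B-limit} only says $G_{0,z_n}-G_{e_1,z_n}\to B^\eta(0,e_1)$, not that the convergence is eventually from above, and the pathwise inequalities between bulk gradients and stationary gradients require the target to lie strictly on one side of the characteristic direction (that is precisely the role of $\eta\in\riUset\setminus\Uset_\xi$, $\xi_1<\eta_1$ in the statement). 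So the reduced claim is at least as hard as the original; moreover your phrase ``once $z_n/\abs{z_n}_1$ is sufficiently close to $\eta$'' carries no content, since you have already fixed $z_n/\abs{z_n}_1\to\eta$. The second step, which you yourself flag as the unproved ``technical heart,'' is therefore a genuine gap rather than a technicality, and the expectation identity $\E[B^\eta(0,e_1)]=e_1\cdot\nabla g(\eta)$ from \eqref{B-g} cannot substitute for a pathwise bound.

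The paper's route keeps the original target $x_n$ (directed into $\Uset_\xi$) and builds the stationary comparison around it: keep the weights $\w_x$ at all $x\le x_n$, and on the north--east boundary through $x_n+e_1+e_2$ place boundary weights $B^\eta(x,x+e_1)$ at the sites $x_n+e_1+e_2-ke_1$ and $B^\eta(x,x+e_2)$ at the sites $x_n+e_1+e_2-ke_2$. By recovery \eqref{recovery} and the cocycle property, the augmented passage time from $y$ to $x_n+e_1+e_2$ equals $B^\eta(y,x_n+e_1+e_2)$, so its $e_1$-gradient at the origin is exactly $B^\eta(0,e_1)$; this is the rigorous form of your ``Burke-type identity.'' The desired bound $B^\eta(0,e_1)\le G_{0,x_n}-G_{e_1,x_n}$ then follows from a crossing/exit-point argument inside this augmented rectangle, where the control that for all large $n$ the maximizing path exits through the favorable part of the boundary is supplied by the strict separation of $\Uset_\xi$ from $\eta$ together with the laws of large numbers \eqref{shape} for $G$ and \eqref{B-shape} for the cocycle. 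If you want to salvage your outline, drop the replacement of $x_n$ by $z_n$ and run the stationary comparison directly against $x_n$.
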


\begin{proof}
Consider the two rectangle with common south-west corner at $0$ and with north-east corners at $x_n$ and $x_n+e_1+e_2$. 
Put weights $\w_x$ at all vertices $x\le x_n$. Let $\bar\w_x=\w_x$ for such vertices.
At sights $x=x_n+e_1+e_2-ke_1$, $k\in\N$, put weights $\bar\w_x=B^\eta(x,x+e_1)$. Similarly, at sights $x=x_n+e_1+e_2-ke_2$, $k\in\N$, put weights $\bar\w_x=B^\eta(x,x+e_2)$.
We will write $\Gbar_{y,x_n}$ for the passage time from $y$ to $x_n$ using weights $\w_x$, as defined in \eqref{G:def2}.
We also use the passage times from $y$ to $x_n+e_1+e_2$ that use weights the combination of weights $\w_x$, $x\le x_n$, and 
\end{proof}
	
To close this section let us describe the situation for solvable models. Here,  fixed points $\nu_\alpha$ can be described explicitly.
For example, in the case of exponentially distributed service times with mean $m_0>0$ (rate $1/m_0$) one can check directly that for any $\alpha>m_0$ inter-arrival times $\{A_{n,0}:n\in\Z\}$ 
that are i.i.d.\ exponentially distributed with mean $\alpha$ (rate $1/\alpha$) furnish an ergodic fixed point of the queuing operator. Because of the uniqueness of ergodic fixed points, this identifies $\nu_\alpha$.
Another direct computation verifies that $f(\alpha)=\E[W_{0,0}+S_{0,0}]=m_0\alpha/(\alpha-m_0)$ and the symmetry observed below \eqref{def:f} says that $\{W_{0,k}+S_{0,k}:k\in\Z\}$
are i.i.d.\ exponentially distributed with mean $f(\alpha)$. One more miracle occurs: It turns out that $\{A_{n,0}:n\in\Z_+\}$ and $\{W_{0,k}+S_{0,k}:k\in\Z_+\}$ are independent of each other. 
See Theorem 3.1 in \cite{ch:Seppalainen} for the proofs of all these distributional claims.
Once the explicit formula for $f$ is known solving the variational formula \eqref{g-f} leads to Rost's formula \eqref{shape:exp}. 

A consequence of the above is that  for $\xi\in\riUset$, $\{B^\xi(ne_1,(n+1)e_1):n\in\Z_+\}$  are independent exponentially distributed with rate $\frac{\sqrt{\xi_1}}{m_0(\sqrt{\xi_1}+\sqrt{\xi_2})}$,
$\{B^\xi(ne_2,(n+1)e_2):n\in\Z_+\}$ are independent exponentially distributed with rate $\frac{\sqrt{\xi_2}}{m_0(\sqrt{\xi_1}+\sqrt{\xi_2})}$, and the two sets of random variables are independent of each other.

Information  about the distribution of the Busemann functions  is powerful.   For example, it allows to get bounds on the coalescence time of geodesics \cite{Pim-16,Bas-Sar-Sly-17-}. We we will see in Section \ref{cif:sec} that 
it enables calculation of  the distribution of the asymptotic direction of the competition interface. It is also used in proving bounds on the fluctuations of passage times and geodesics, as is done
in Section 5 of \cite{ch:Seppalainen}.

\section{Geodesics}\label{geodesics}

In this section, let us assume  the conditions of Theorem \ref{Bus:thm} to be satisfied. In particular, the shape $g$ is differentiable on $(0,\infty)^2$.

One of the important questions in LPP concerns infinite geodesics: an infinite path is a geodesic if every finite segment of it is a geodesic between its endpoints.
The following existence result comes quite easily.

\begin{lemma}\label{exist-geo}
With probability one, for every $x\in\Z^2$ there is at least one infinite geodesics starting at that point. 
\end{lemma}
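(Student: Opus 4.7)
The plan is a pathwise König's-lemma extraction from finite geodesics. No nontrivial probabilistic input is actually needed: the conclusion will hold for every $\omega\in\Omega$ (and every $x\in\Z^2$ simultaneously), so the "probability one" statement is obtained for free. In particular, none of the Busemann machinery of Section \ref{sec:Bus} will be invoked, which matches the author's remark that the existence comes "quite easily."

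First I fix $x\in\Z^2$ and, for each $n\in\N$, set $y_n=x+n(e_1+e_2)$. Because the set of admissible paths from $x$ to $y_n$ is finite, the maximum in \eqref{G:def2} is attained; pick any maximizer $\pi^{(n)}=(\pi^{(n)}_0,\dots,\pi^{(n)}_{2n})$ with $\pi^{(n)}_0=x$, $\pi^{(n)}_{2n}=y_n$. The collection of up-right paths emanating from $x$ forms an infinite rooted binary tree (each vertex has exactly two children, via an $e_1$- or $e_2$-step), and each $\pi^{(n)}$ is a root-to-leaf segment of depth $2n$ in this tree.

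Next I iterate pigeonhole (this is König's lemma on the binary tree): among the $\pi^{(n)}$, infinitely many share the same first step, giving a common second vertex $z_1$; among those, infinitely many share the same second step, giving $z_2$; and so on. Diagonalizing, I extract an infinite up-right sequence $z_0=x,z_1,z_2,\dots$ such that, for every $k$, the prefix $(z_0,\dots,z_k)$ coincides with the first $k+1$ vertices of $\pi^{(n)}$ for infinitely many $n$.

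Finally I check that each finite segment $(z_0,\dots,z_k)$ is itself a geodesic from $z_0$ to $z_k$: if some admissible path from $z_0$ to $z_k$ had strictly larger passage time (in the sense of \eqref{G:def2}), then concatenating it with the tail $(\pi^{(n)}_k,\dots,\pi^{(n)}_{2n})$, for any $n$ realizing the prefix agreement, would yield an admissible path from $x$ to $y_n$ with strictly larger passage time than $\pi^{(n)}$, contradicting the optimality of $\pi^{(n)}$. Hence $(z_k)_{k\ge 0}$ is an infinite geodesic starting at $x$. The only real verification is this "prefix of a geodesic is a geodesic" observation, which is immediate from the additive form of passage time along up-right paths; I do not expect any obstacle. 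Since the construction is deterministic in $\omega$ and $\Z^2$ is countable, it produces infinite geodesics from every $x\in\Z^2$ simultaneously, almost surely (in fact surely).
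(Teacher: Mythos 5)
Your proof is correct and follows essentially the same route as the paper: extract, by pigeonhole/diagonalization (König's lemma on the tree of up-right paths), an infinite path whose every prefix agrees with infinitely many finite geodesics from $x$ to far-away targets, and note that prefixes of geodesics are geodesics. The only cosmetic differences are your specific choice of targets $y_n=x+n(e_1+e_2)$ (the paper uses an arbitrary sequence with $\abs{x_n}_1\to\infty$) and your explicit verification of the "prefix of a geodesic is a geodesic" step, which the paper leaves implicit.
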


\begin{proof}
Fix $x\in\Z^2$.
Take any sequence $x_n\in\Z^2_+$ with $\abs{x_n}_1\to\infty$ and 
consider for each $n$ a geodesic from $x$ to $x_n$. Denote it by $x_{0,n}^{(n)}$. Fix $m\ge1$. We have only finitely many possible  
up-right paths of length $m$. Hence, there exists a subsequence along which $x_{0,n}^{(n)}$ all share the same initial $m$ steps.
Using the diagonal trick, we can find a subsequence $n_j$ such that for all $m\ge1$ there exists a $j_m$ such that , $\{x_{0,n_j}^{(n_j)}:j\ge j_m\}$ share the first $m$ steps.
This constructs an infinite path $x_{0,\infty}$ such that for all $m\ge1$, $x_{0,m}$ is the path shared by $\{x_{0,n_j}^{(n_j)}:j\ge j_m\}$. In particular, $x_{0,m}$ is a geodesic between
$x_0=x$ and $x_m$, for all $m\ge1$, and thus $x_{0,\infty}$ is an infinite geodesic starting at $x_0=x$.
\end{proof}

Now we know that infinite geodesics exist. But how many infinite geodesics starting at a given point are there? And what are their properties? Does every infinite geodesic $x_{0,\infty}$ 
have to have an asymptotic direction, i.e.\ is it necessary that $x_n/n\to\xi$ for some $\xi\in\Uset$? Can there be multiple geodesics that go in the same asymptotic direction $\xi\in\Uset$?
Do geodesics starting at different points and going in a given direction $\xi$ cross? Does there exist a bi-infinite geodesic, i.e.\ an up-right path $x_{-\infty,\infty}$ whose finite segments are all geodesics?\medskip

Busemann functions can help answer some (if not all) of the above questions. Take a look, for example, at Theorems \ref{direction}, \ref{direction2}, \ref{left}, \ref{coal:thm}, \ref{unique:thm}, \ref{double:thm} and Corollary \ref{nice-cor} below.\medskip

To see the connection between Busemann functions and geodesics start with formula \eqref{G:ind2}. Say, for simplicity, weights $\w_x$ have a continuous distribution. Then $G_{x+e_1,y}=G_{x+e_2,y}$ happens with zero probability and thus there is a unique $i\in\{1,2\}$ for which 
	\[\w_x=G_{x,y}-G_{x+e_i,y}.\]
The geodesic path from $x$ to $y$ will follow this increment and go from $x$ to $x+e_i$. Then, from there the procedure can be repeated, until the path reaches
the north-east boundary with corner $y$, i.e.\ until one gets to an $x\in\{y-ke_1:k\in\N\}\cup\{y-ke_2:k\in\N\}$. From there, the geodesic marches straight to $y$ using only
$e_1$ or only $e_2$ steps.
This description of geodesics motivates the following.

\begin{lemma}\label{B-geo}
{\rm\cite{Geo-Ras-Sep-17-ptrf-1}}
Let $B:\Z^2\times\Z^2\to\R$ satisfy the cocycle and recovery properties \eqref{cocycle}  and \eqref{recovery}. Let $x_{0,\infty}$ be a path such that
for every $i\ge0$ we have
	\[\w_{x_i}=B(x_i,x_{i+1}).\]
In other words, the path goes along the ``minimal gradient'' of $B$.  Then, $x_{0,\infty}$ is a geodesic.
\end{lemma}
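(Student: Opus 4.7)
The plan is to prove the equivalent statement that for every $n\ge 1$ the partial path $x_{0,n}$ is a geodesic from $x_0$ to $x_n$, i.e.\ that $\sum_{i=0}^{n-1}\w_{x_i}=G_{x_0,x_n}$. The strategy is to identify both sides with $B(x_0,x_n)$: the left side via telescoping along the given path using the cocycle property, and the right side as an upper bound obtained by comparing any competing up-right path to $B$ via the recovery property.

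First I would use the hypothesis $\w_{x_i}=B(x_i,x_{i+1})$ together with the cocycle \eqref{cocycle} to telescope:
\[
\sum_{i=0}^{n-1}\w_{x_i}=\sum_{i=0}^{n-1}B(x_i,x_{i+1})=B(x_0,x_n).
\]
This shows that the path $x_{0,n}$ collects exactly $B(x_0,x_n)$, and in particular $G_{x_0,x_n}\ge B(x_0,x_n)$.

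For the reverse inequality, let $y_{0,n}$ be an arbitrary admissible up-right path with $y_0=x_0$ and $y_n=x_n$. For each $i$, since $y_{i+1}-y_i\in\{e_1,e_2\}$, the recovery property \eqref{recovery} yields
\[
\w_{y_i}=\min\bigl(B(y_i,y_i+e_1),B(y_i,y_i+e_2)\bigr)\le B(y_i,y_{i+1}).
\]
Summing over $i$ and telescoping via the cocycle gives
\[
\sum_{i=0}^{n-1}\w_{y_i}\le\sum_{i=0}^{n-1}B(y_i,y_{i+1})=B(y_0,y_n)=B(x_0,x_n).
\]
Taking the maximum over all such $y_{0,n}$ produces $G_{x_0,x_n}\le B(x_0,x_n)$, and combining with the previous inequality yields $G_{x_0,x_n}=B(x_0,x_n)=\sum_{i=0}^{n-1}\w_{x_i}$, as required.

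There is really no serious obstacle here: the proof is simply the observation that the cocycle makes $B(x_0,x_n)$ path-independent (so telescoping is clean), while recovery says $\w_x$ is the \emph{smaller} of the two outgoing $B$-increments, which is exactly what forces any admissible path to collect at most $B(x_0,x_n)$ and our path to collect exactly that amount. The only mild care needed is to use definition \eqref{G:def2} consistently (summing $\w_{x_i}$ for $0\le i\le n-1$, not including $\w_{x_n}$), which matches the telescoping range above.
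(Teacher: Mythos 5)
Your proof is correct and follows essentially the same route as the paper: telescope $\w_{x_i}=B(x_i,x_{i+1})$ via the cocycle to identify the path's weight with $B(x_0,x_n)$, and use recovery ($\w_{y_i}\le B(y_i,y_{i+1})$) plus the cocycle to bound any competing up-right path by the same quantity. The only cosmetic difference is that you phrase it as two inequalities sandwiching $G_{x_0,x_n}=B(x_0,x_n)$, while the paper writes a single chain of (in)equalities; the substance is identical.
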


\begin{proof}
Fix $n\ge1$. Consider an arbitrary up-right path $y_{0,n}$ with $y_0=x_0$ and $y_n=x_n$. Write
	\[\sum_{i=0}^{n-1} \w_{x_i}
	=\sum_{i=0}^{n-1}B(x_i,x_{i+1})
	=B(x_0,x_n)
	=\sum_{i=0}^{n-1}B(y_i,y_{i+1})
	\ge\sum_{i=0}^{n-1}\w_{y_i}.\]
(The first equality is from recovery, the second and third use the cocycle property, and the fourth uses recovery again.)
Take a maximum over all up-right paths $y_{0,n}$ between $x_0$ and $x_n$ to get
	\[\sum_{i=0}^{n-1} \w_{x_i}\ge G_{x_0,x_n},\]
which says that $x_{0,n}$ is a geodesic. Since $n$ was arbitrary, the lemma is proved. 
\end{proof}

As a bonus, we  get in the above proof that when $x_{0,\infty}$ follows the smallest gradient of a cocycle $B$ that recovers, we have for $0\le m\le n$
	\begin{align}\label{bonus}
	G_{x_m,x_n}=\sum_{i=m}^{n-1} \w_{x_i}=B(x_m,x_n).
	\end{align}

The above lemma says in particular that Busemann functions $B^\xi$ from \eqref{B-limit} provide us with a ``machine'' to produce infinite geodesics starting from any given point.
Given a starting point $u$, a direction $\xi\in\riUset$, and an integer $j\in\{1,2\}$, let $x_{0,\infty}^{u,\xi,j}$ be the path produced by the following inductive mechanism: $x_0=u$ and for  $k\ge0$, 
if $B^\xi(x_k,x_k+e_1)\ne B^\xi(x_k,x_k+e_2)$, then let $x_{k+1}=x_k+e_i$ for the unique $i\in\{1,2\}$ such that $\w_{x_k}=B^\xi(x_k,x_k+e_i)$. If, on the other hand,
$B^\xi(x_k,x_k+e_1)=B^\xi(x_k,x_k+e_2)$, then break the tie by letting $x_{k+1}=x_k+e_j$.\medskip

Now that we know how to produce geodesics we ask about whether or not these geodesics have an asymptotic direction. We have the following theorem.

\index{geodesic!direction}%
\begin{theorem}\label{direction}
{\rm\cite{Geo-Ras-Sep-17-ptrf-1}}
Make the same assumptions as in Theorem \ref{Bus:thm}. For each $\xi\in\riUset$ we have with probability one that for all $u\in\Z^2$ and $j\in\{1,2\}$ 
	\[\P\big\{\text{geodesic }x_{0,\infty}^{u,\xi,j}\text{ is asymptotically directed into }\Uset_\xi\big\}=1.\]
In particular, if the boundary of $\cC$ is strictly convex at $\xi$, then $x_{0,\infty}^{u,\xi,j}$ has asymptotic direction $\xi$: $n^{-1}x_n^{u,\xi,j}\to\xi$ as $n\to\infty$.
\end{theorem}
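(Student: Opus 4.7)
The plan is to combine Lemma \ref{B-geo} with the shape theorem and an ergodic-theorem computation for the stationary cocycle $B^\xi$. First, Lemma \ref{B-geo} already guarantees that $x_{0,\infty}^{u,\xi,j}$ is a genuine infinite geodesic, and the identity \eqref{bonus} produced in its proof gives
\[
G_{u,\,x_n^{u,\xi,j}} \;=\; B^\xi(u,\,x_n^{u,\xi,j}) \qquad \text{for all } n \ge 0.
\]
I will pass to a subsequence along which $x_n/n \to \zeta$ for some $\zeta \in \Uset$ and identify the two asymptotic rates in this identity.

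On the left, since $|x_n - u|_1 = n - O(1)$, the shape theorem \eqref{shape} yields $n^{-1} G_{u,x_n} \to g(\zeta)$ almost surely. On the right, write $x_n - u = a_n e_1 + b_n e_2$, use the cocycle property \eqref{cocycle} to decompose $B^\xi(u, x_n)$ along the L-shaped path from $u$ through $u + a_n e_1$ to $x_n$, and invoke shift-invariance \eqref{shift} together with the ergodic theorem applied separately to the stationary ergodic increment sequences $\{B^\xi(ke_1, (k+1)e_1)\}_{k \ge 0}$ and $\{B^\xi(ke_2, (k+1)e_2)\}_{k \ge 0}$. Since $a_n/n \to \zeta_1$ and $b_n/n \to \zeta_2$, this should give
\[
\frac{B^\xi(u, x_n)}{n} \;\longrightarrow\; \zeta_1\, \E[B^\xi(0,e_1)] + \zeta_2\, \E[B^\xi(0,e_2)] \;=\; \zeta \cdot \nabla g(\xi),
\]
using the first identity in \eqref{B-g} in the last equality.

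Equating the two limits gives $g(\zeta) = \zeta \cdot \nabla g(\xi)$. Since $g$ is concave with $g(\xi) = \xi \cdot \nabla g(\xi)$ by Euler's identity \eqref{euler}, the linear form $\eta \mapsto \eta \cdot \nabla g(\xi)$ dominates $g$ globally and matches it at both $\xi$ and $\zeta$; concavity then forces $g$ to be affine on the whole segment $[\xi, \zeta]$. By the characterization of $\Uset_\xi$ given before the theorem as the maximal flat piece of $g$ through $\xi$, this means $\zeta \in \Uset_\xi$, proving that $x_{0,\infty}^{u,\xi,j}$ is asymptotically directed into $\Uset_\xi$. If $\partial \cC$ is strictly convex at $\xi$, this maximal flat piece collapses to $\{\xi\}$, so $x_n/n \to \xi$.

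The step I expect to be the main obstacle is the ergodic-theorem computation, because the split $a_n + b_n = n$ is determined by the random path and not a deterministic index. The standard fix is that both ray-increment processes are stationary and integrable by Theorem \ref{Bus:thm} and, under its hypotheses, ergodic, so a single null set suffices for Ces\`aro convergence along all of $\N$; the random indices $a_n, b_n \to \infty$ then cause no harm. A minor side point: Theorem \ref{th:Martin} rules out the subsequential limit $\zeta \in \{e_1, e_2\}$ since $g$ is not affine near the axes, so the supporting-hyperplane argument is only ever applied to $\zeta \in \riUset$.
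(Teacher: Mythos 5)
Your overall skeleton is the same as the paper's: the identity $G_{u,x_n}=B^\xi(u,x_n)$ from \eqref{bonus}, the uniform shape theorem \eqref{shape} on the left, a law of large numbers for the cocycle on the right, and then the Euler/concavity argument forcing $g$ to be affine on the segment from $\xi$ to $\zeta$, hence $\zeta\in\Uset_\xi$. Your supporting-hyperplane version of that last step is correct (and arguably cleaner than the paper's computation with the auxiliary function $f(t)$). The gap is precisely in the step you flagged, and your proposed fix does not repair it. When you split $B^\xi(u,x_n)$ along the L-shaped path through $u+a_ne_1$, the cocycle property gives the vertical contribution as $\sum_{j=0}^{b_n-1}B^\xi(u+a_ne_1+je_2,\,u+a_ne_1+(j+1)e_2)$: these increments sit on the column at horizontal position $u\cdot e_1+a_n$, which moves with $n$. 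They are therefore not partial sums of the single fixed stationary sequence $\{B^\xi(ke_2,(k+1)e_2)\}_{k\ge0}$, so Birkhoff's theorem along that fixed ray --- which is what \eqref{B-ergod} provides --- says nothing about them. The obstacle is not ``random indices into a fixed ergodic sequence'' (that would indeed be harmless); it is that the sequence being summed changes with $n$, and shift-invariance gives only equality in distribution, not simultaneous almost sure control over all columns.

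What the argument needs is uniformity over all lattice points at $\ell^1$-distance $n$, i.e.\ the shape theorem for recovering cocycles, \eqref{B-shape} in Theorem \ref{B-shape:thm}: $\max_{x\in\Z_+^2:\,\abs{x}_1=n}\abs{B^\xi(0,x)-\Bavg\cdot x}/n\to0$ almost surely. With that in hand, $n_j^{-1}B^\xi(u,x_{n_j})\to\zeta\cdot\nabla g(\xi)$ along your subsequence no matter how the geodesic chooses its increments, and the rest of your argument goes through verbatim; this is exactly how the paper proceeds. Note that \eqref{B-shape} is genuinely stronger than the fixed-direction ergodic statement: the paper points out that it requires ``considerably more work'' and uses the recovery property \eqref{recovery}, so it cannot be obtained from the C\'esaro/null-set remark in your closing paragraph. (Your side observation that $\zeta\in\{e_1,e_2\}$ is excluded via Theorem \ref{th:Martin} is fine, but once affinity on the segment is established the conclusion $\zeta\in\Uset_\xi$ already rules out the axis directions, since $\Uset_\xi$ contains neither $e_1$ nor $e_2$.)
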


The proof of the above theorem needs a fact about stationary $L^1$ cocycles, i.e.\ measurable functions $B:\Omega\times\Z^2\times\Z^2\to\R$ that satisfy \eqref{cocycle} and \eqref{shift} and are such that 
for each $x,y\in\Z^2$ we have $\E[\abs{B(\w,x,y)}]<\infty$.

\index{cocycle!ergodic theorem}%
\begin{theorem}\label{B-shape:thm}
{\rm\cite{Geo-Ras-Sep-17-ptrf-1,Ras-Sep-Yil-13,Geo-etal-15}}
Let $B$ be a stationary $L^1$ cocycle. Define $\Bavg=\E[B(0,e_1)]e_1+\E[B(0,e_2)]e_2$.
Then for any $\xi\in\R_+^2$ we have $\P$-almost surely
	\begin{align}\label{B-ergod}
	\lim_{n\to\infty} \frac{B(0,\fl{n\xi})}n=\Bavg\cdot\xi.
	\end{align}
If furthermore $B$ recovers {\rm(}i.e.\ satisfies \eqref{recovery}{\rm)}, then we also have
	\begin{align}\label{B-shape}
	\lim_{n\to\infty}\max_{x\in\Z_+^2:\abs{x}_1=n}\frac{\abs{B(0,x)-\Bavg\cdot x}}n=0\quad\P\text{-almost surely.}
	\end{align}
\end{theorem}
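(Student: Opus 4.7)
The plan is to prove \eqref{B-ergod} via the one-dimensional Birkhoff ergodic theorem applied to the shifts $T_v$ on the i.i.d.\ measure $\P$, and then to derive the uniform statement \eqref{B-shape} from \eqref{B-ergod} by exploiting the recovery property to connect $B$ with the LPP passage times $G$, for which the uniform shape theorem \eqref{shape} is already available.

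For pointwise convergence \eqref{B-ergod}: since $\P$ is a product measure, the shift $T_v$ is ergodic for every $v\in\Z^2\setminus\{0\}$. By the cocycle and stationarity properties, $B(0,Nv)=\sum_{k=0}^{N-1}B(0,v)\circ T_{kv}$, so Birkhoff's theorem gives $N^{-1}B(0,Nv)\to \Bavg\cdot v$ almost surely and in $L^1$. This handles all rational directions $\xi=v/\abs{v}_1$. For irrational $\xi$, I would approximate from both sides by rationals $\xi^{(\ell)}\to\xi$ and use the cocycle decomposition
\[ B(0,\fl{n\xi})-B(0,\fl{n\xi^{(\ell)}})=B(\fl{n\xi^{(\ell)}},\fl{n\xi}), \]
together with the crude telescoping $L^1$ bound $\E\abs{B(\fl{n\xi^{(\ell)}},\fl{n\xi})}\le Cn\abs{\xi-\xi^{(\ell)}}$ obtained by summing single-step increments whose $L^1$ norms are uniformly bounded by $\E\abs{B(0,e_1)}+\E\abs{B(0,e_2)}$. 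A Borel--Cantelli argument along a polynomially growing subsequence, combined with interpolation within the gaps, then upgrades this $L^1$ control to almost sure convergence, and continuity (indeed linearity) of $v\mapsto \Bavg\cdot v$ identifies the limit.

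For uniform convergence \eqref{B-shape}: recovery \eqref{recovery} gives $B(x,x+e_i)\ge\w_x$ for both $i$, so iterating along any up-right path $0=y_0,\dotsc,y_n$ yields $B(0,y_n)\ge\sum_k\w_{y_k}$, whence $B(0,x)\ge G_{0,x}$ for all $x\ge 0$. Conversely, Lemma \ref{B-geo} supplies a distinguished up-right path (the $B$-geodesic from $0$) along which recovery collapses to equality, so that $B(0,y)=\sum_k\w_{y_k}$, and since this path is simultaneously a $G$-geodesic, this sum equals $G_{0,y}$ for any $y$ on it. Combining this sandwich with the uniform LPP shape theorem \eqref{shape} and the pointwise convergence \eqref{B-ergod} at a dense countable set of rational directions, a standard $\e$-net/compactness argument on the simplex $\Uset$ does the rest: for any $x$ with $\abs{x}_1=n$ whose normalized direction is close to a rational $\xi^*$, the difference $\abs{B(0,x)-B(0,\fl{n\xi^*})}/n$ is controlled by the telescoping $L^1$ bound from the first stage together with an almost sure interpolation along the subsequence, while $\Bavg\cdot(x/n)$ is close to $\Bavg\cdot\xi^*$ by continuity of the linear map.

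The principal obstacle is the irrational-direction case of \eqref{B-ergod}. A general $L^1$ cocycle admits no monotonicity in the direction parameter (unlike Busemann functions, which enjoy the monotonicity of Lemma \ref{lm:monotone}), so the rational approximation cannot be pushed through by sandwiching and must instead go via a careful $L^1$-to-almost-sure interpolation along a well-chosen subsequence. Once \eqref{B-ergod} is available, the upgrade to \eqref{B-shape} is largely technical: recovery supplies the needed \emph{lower} control on $B$, and the $B$-geodesic gives a matching \emph{upper} realization of $B(0,y)$ as an LPP passage time, but stitching these together \emph{uniformly} across directions requires carefully using the uniform shape theorem \eqref{shape} to smear the control from a finite net of rational directions to all of $\Uset$.
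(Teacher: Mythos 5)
Your first step is exactly the paper's own argument, and in fact the paper proves no more than that: it applies Birkhoff's theorem to $B(0,Nv)=\sum_{k<N}B(0,v)\circ T_{kv}$ for a lattice vector $v$ and identifies the mean as $\Bavg\cdot v$ by decomposing $B(0,v)$ into $e_1$- and $e_2$-increments, then explicitly defers general directions ("some more work") and \eqref{B-shape} ("considerably more work") to the references. The genuine gap in your proposal is the upgrade from rational to irrational directions. The telescoping bound $\E\abs{B(\fl{n\xi^{(\ell)}},\fl{n\xi})}\le Cn\abs{\xi-\xi^{(\ell)}}_1$ is correct, but Markov's inequality then yields only $\P\bigl\{\abs{B(\fl{n\xi^{(\ell)}},\fl{n\xi})}\ge\delta n\bigr\}\le C\abs{\xi-\xi^{(\ell)}}_1/\delta$, a bound that does not decay in $n$; it is not summable along any subsequence, so Borel--Cantelli gives nothing, and since a general stationary $L^1$ cocycle has no monotonicity in $n$ or in the direction, there is also nothing to interpolate with inside the gaps of a subsequence. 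This is not a presentational issue: under a bare $L^1$ hypothesis the passage from lattice directions to an arbitrary fixed direction is precisely the nontrivial content of \eqref{B-ergod} (it needs a maximal-inequality/ergodic-theoretic argument of Boivin--Derriennic type, which is why the paper cites the references), and uniformity over directions genuinely requires the extra one-sided control that recovery supplies, which is why \eqref{B-shape} is stated only for recovering cocycles.

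Your route to \eqref{B-shape} also deploys recovery in the wrong place. The global inequality $B(0,x)\ge G_{0,x}$ together with \eqref{shape} bounds $n^{-1}B(0,x)$ from below by $g(x/n)+o(1)$, but a recovering cocycle satisfies $g(\zeta)\le\Bavg\cdot\zeta$, so this lower bound is against the wrong (smaller) linear functional; and the equality $B(0,y)=G_{0,y}$ holds only along the single $B$-geodesic of Lemma \ref{B-geo}, i.e.\ at one site per antidiagonal, which cannot control the maximum over all $\abs{x}_1=n$, nor does it provide any upper bound on $B(0,x)$ elsewhere. The way recovery is actually used is local, through the cocycle property: for grid points $u\le x\le v$ (coordinatewise) taken along finitely many rational directions with $\abs{v-u}_1\le\e n$, one writes $B(0,x)=B(0,u)+B(u,x)=B(0,v)-B(x,v)$ and applies $B(u,x)\ge G_{u,x}$ and $B(x,v)\ge G_{x,v}$, where these passage times over boxes of diameter $O(\e n)$ are uniformly $O(\e n)$ by the law of large numbers/shape theorem for the i.i.d.\ weights; this sandwiches $B(0,x)$ between $B(0,u)$ and $B(0,v)$ up to $O(\e n)$ errors, and the already established limits at the countably many grid points then give \eqref{B-shape} (and, once recovery is available, also the irrational-direction case). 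As written, your proposal establishes \eqref{B-ergod} only for rational directions and does not reach \eqref{B-shape}.
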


\begin{proof}[Sketch of proof of Theorem \ref{B-shape:thm}]
As it was the case in Theorem \ref{th:shape}, we will show how the proof of \eqref{B-ergod} goes when $\xi\in\Z_+^2$. The case $\xi\in\R_+^2$ comes with some more work and \eqref{B-shape} comes with considerably more work.
Assume thus that $\xi\in\Z^2_+$. Then we can use the cocycle and stationarity properties to write
	\[B(\w,0,n\xi)=\sum_{i=0}^{n-1}B(\w,i\xi,(i+1)\xi)=\sum_{i=0}^{n-1}B(T_{i\xi}\w,0,\xi).\]
Terms $B(T_{i\xi}\w,0,\xi)$ are just shifted copies of the first term $B(\w,0,\xi)$. As such, $n^{-1}B(\w,0,n\xi)$ can be thought of as a sample mean of, albeit dependent, samples of $B(\w,0,\xi)$. 
A generalization of the law of large numbers, called the ergodic theorem, tells us then that this sample mean converges to the population mean $\E[B(\w,0,\xi)]$. In other words, for $\P$-almost every $\w$
	\begin{align}\label{B-ergod2}
	\lim_{n\to\infty} \frac{B(\w,n\xi)}n=\E[B(0,\xi)].
	\end{align}
Now use the cocycle property again to write
	\[B(0,\xi)=\sum_{i=0}^{\xi\cdot e_1-1}B(ie_1,(i+1)e_1)+\sum_{j=0}^{\xi\cdot e_2-1}B((\xi\cdot e_1)e_1+je_2,(\xi\cdot e_1)e_1+(j+1)e_2).\]
The summands in the first sum are shifted copies of $B(0,e_1)$ and the summands in the second sum are shifted copies of $B(0,e_2)$.
Hence, taking expectation we get $\E[B(0,\xi)]=\E[B(0,e_1)]\xi\cdot e_1+\E[B(0,e_2)]\xi\cdot e_2=\Bavg\cdot\xi$, which combined with \eqref{B-ergod2} proves the claim of the theorem.
\end{proof}

\begin{proof}[Proof of Theorem \ref{direction}]
Let us abbreviate and write $x_n$ for $x^{u,\xi,j}_n$. Let $\zeta\in\Uset$ be a (possibly random) limit point of $x_n/n$, i.e.\ there exists a (possibly random) subsequence $n_j$ such that $x_{n_j}/{n_j}\to\zeta$.

By Lemma \ref{B-geo} we know $x_{0,\infty}$ is a geodesic and by its definition it moves along the smallest gradient of $B^\xi$. By \eqref{bonus}
	\[G_{0,x_n}=B^\xi(x_0,x_n).\]
Divide by $n$ then apply this to $n=n_j$ and use \eqref{shape} and \eqref{B-shape} to deduce that
	\[g(\zeta)=\E[B^\xi(0,e_1)]\zeta\cdot e_1+\E[B^\xi(0,e_2)]\zeta\cdot e_2.\]
Apply the first equality in \eqref{B-g} to get
	\[g(\zeta)=\zeta\cdot\nabla g(\xi).\]
Combine with \eqref{euler} to get
	\[g(\zeta)-g(\xi)=(\zeta-\xi)\cdot\nabla g(\xi).\]
Then, function
	\[f(t)=g(t\xi+(1-t)\zeta)-g(\xi)-(t\xi+(1-t)\zeta-\xi)\cdot\nabla g(\xi),\quad t\in[0,1],\]
satisfies $f(0)=f(1)=0$ and
	\[\lim_{\e\searrow0}\frac{f(1)-f(1-\e)}\e=-\lim_{\e\searrow0}\frac{g(\xi+\e(\zeta-\xi))-g(\xi)}\e+(\zeta-\xi)\cdot\nabla g(\xi)=0.\]
Since $f$ is also concave it is identically $0$ and thus for all $t\in[0,1]$ 
	\[g(t\xi+(1-t)\zeta)-g(\xi)=(t\xi+(1-t)\zeta-\xi)\cdot\nabla g(\xi)=(1-t)(\zeta-\xi)\cdot\nabla g(\xi).\]
This says $g$ is affine on $\{t\xi+(1-t)\zeta:0\le t\le1\}$ and thus $\zeta\in\Uset_\xi$. The theorem is proved.
\end{proof}

Now that we know that geodesics generated using Busemann functions $B^\xi$ have an asymptotic direction we can prove the same thing about all geodesics.

\index{geodesic!direction}%
\begin{theorem}\label{direction2}
{\rm\cite{Geo-Ras-Sep-17-ptrf-1}}
Make the same assumptions as in Theorem \ref{Bus:thm}. With probability one, any geodesic is asymptotically directed into $\Uset_\xi$ for some $\xi\in\Uset$.
\end{theorem}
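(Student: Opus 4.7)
The plan is to argue by contradiction using a sandwich comparison between $y_{0,\infty}$ and the Busemann geodesics whose directions are already controlled by Theorem \ref{direction}. Let $\Xi\subset\Uset$ denote the set of limit points of $y_n/n$. Since $\abs{y_{n+1}-y_n}_1=1$ implies that successive terms $y_n/n$ differ by $O(1/n)$, the set $\Xi$ is closed and connected, hence a closed subarc of $\Uset$. The goal is to show $\Xi\subset\Uset_\xi$ for some $\xi$.

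The first step exploits uniqueness of geodesics. Since $\mu$ is continuous, the event that two distinct up-right paths between a fixed pair of lattice points yield equal passage times is a countable union of null sets, so with probability one every finite geodesic is unique. Consequently, two distinct infinite up-right geodesics starting at a common vertex cannot rejoin after their first splitting vertex, since a re-meeting would produce two distinct optimal paths between the split vertex and the meeting vertex. Applied to $y_{0,\infty}$ and the Busemann geodesic $x^{y_0,\zeta,j}$ for $\zeta\in\riUset\cap\Q^2$, this yields that either the two paths coincide, or $y_n\cdot e_1-x^{y_0,\zeta,j}_n\cdot e_1$ has a constant nonzero sign for all $n$ past the splitting step (both paths lie on the same antidiagonal, so one is unambiguously to the right of the other).

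Combined with Theorem \ref{direction}, which places the asymptotic directions of $x^{y_0,\zeta,j}$ inside $\Uset_\zeta$, each rational $\zeta$ forces one of
\begin{align*}
\Xi\subset\Uset_\zeta,\qquad \Xi\subset\{\eta\in\Uset:\eta\cdot e_1\le\zeta_1^+\},\qquad \Xi\subset\{\eta\in\Uset:\eta\cdot e_1\ge\zeta_1^-\},
\end{align*}
where $\zeta_1^\pm=\max/\min\{\eta\cdot e_1:\eta\in\Uset_\zeta\}$. To derive the contradiction, suppose $\Xi$ is not contained in any single $\Uset_\xi$. Since distinct faces are disjoint and $\Xi$ is an arc, $\Xi$ contains two points in distinct faces. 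When these two faces are strictly separated (in the $e_1$-coordinate), one may pick a rational $\zeta$ with $\Uset_\zeta$ lying strictly between them, so that $\zeta_1^->\tilde\xi^a\cdot e_1$ and $\zeta_1^+<\tilde\xi^b\cdot e_1$ for the two offending points; then each of the three alternatives above excludes at least one of them from $\Xi$, a contradiction.

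The main technical obstacle will be the edge case in which $\Xi$ is covered by two adjacent faces joined at a single (possibly irrational) corner $\bar\xi$, so that no rational $\zeta$ yields a face $\Uset_\zeta$ strictly sandwiched inside the gap. I plan to resolve this by working with the Busemann cocycle $B^{\bar\xi}$ at the corner direction itself—defined via the limit in Theorem \ref{Bus:thm} along sequences asymptotic to the singleton face $\Uset_{\bar\xi}=\{\bar\xi\}$—which produces a Busemann geodesic with exact asymptotic direction $\bar\xi$. The sandwich bound from such a geodesic forces $\Xi$ to lie entirely on one side of $\bar\xi$, excluding one of the two adjacent faces and completing the contradiction.
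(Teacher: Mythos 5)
Your argument has a genuine gap at its very first step: you invoke continuity of the weight distribution ("since $\mu$ is continuous\dots every finite geodesic is unique") to conclude that your geodesic and the Busemann geodesic $x^{y_0,\zeta,j}$ cannot re-meet after splitting, so that their $e_1$-coordinate difference keeps a constant sign. But Theorem \ref{direction2} is asserted under the hypotheses of Theorem \ref{Bus:thm} only, which do \emph{not} include continuity of the distribution (atoms are allowed; geometric weights are one of the motivating solvable cases). With atoms, finite geodesics need not be unique, geodesics can split and re-merge, and your sign-constancy trichotomy collapses. The paper gets the same one-sided trapping without any uniqueness, via Lemma \ref{order}: the tie-broken Busemann geodesic $x^{u,\xi,1}$ is the \emph{right-most} geodesic between any two of its points (and $x^{u,\xi,2}$ the left-most), so once an arbitrary geodesic from $u$ goes strictly to one side it must stay weakly on that side, and it is therefore sandwiched between $x^{u,\xi,1}$ and $x^{u,\xi,2}$, both directed into $\Uset_\xi$ by Theorem \ref{direction}. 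If you replace your uniqueness step by this extremality lemma, the rest of your comparison scheme goes through.

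Two further remarks. First, the "edge case" you flag cannot occur under the standing differentiability assumption: two distinct maximal linear segments of the differentiable concave function $t\mapsto g(t,1-t)$ cannot share an endpoint (a shared endpoint with distinct slopes would be a kink), so any two distinct faces are separated by a nonempty open gap, and any rational $\zeta$ in that gap has $\Uset_\zeta$ strictly inside the gap; this is exactly how the paper chooses its separating rational direction. Second, your proposed repair for that case is itself not licensed: Theorem \ref{Bus:thm} gives the Busemann limit almost surely for each \emph{fixed} direction, with a null set depending on the direction, so you cannot apply it to the random corner direction $\bar\xi$ determined by the limit set $\Xi$ of the geodesic under study. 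Fortunately, as noted, that case is vacuous here.
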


The proof will need one more fact about geodesics $x_{0,\infty}^{u,\xi,j}$.

\begin{lemma}\label{order}
{\rm\cite{Geo-Ras-Sep-17-ptrf-1}}
For all $n\ge m$, $x_{m,n}^{u,\xi,1}$ is the right-most geodesic between its two endpoints: if $y_{m,n}$ is a geodesic between $x_m^{u,\xi,1}$ and $x_n^{u,\xi,1}$, then we have
$y_k\cdot e_1\le x_k^{u,\xi,1}\cdot e_1$ for $m\le k\le n$. Similarly, $x_{m,n}^{u,\xi,2}$ is the left-most geodesic between its two endpoints.
\end{lemma}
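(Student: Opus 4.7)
The plan is to show that every geodesic between the endpoints $x_m^{u,\xi,1}$ and $x_n^{u,\xi,1}$ must follow the same minimum-gradient recipe as $x^{u,\xi,1}$ with respect to $B^\xi$, differing only possibly in the resolution of ties. The deterministic tie-breaking rule built into $x^{u,\xi,1}$ (which always chooses $e_1$ when $j = 1$) then guarantees that any competing geodesic cannot pull ahead in the $e_1$ coordinate.

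First I would establish that if $y_{m,n}$ is a geodesic with $y_m = x_m^{u,\xi,1}$ and $y_n = x_n^{u,\xi,1}$, then $\w_{y_i} = B^\xi(y_i, y_{i+1})$ for every $m \le i < n$; equivalently, $y_{i+1} - y_i$ is a minimum-achieving direction of $B^\xi$ at $y_i$ in the sense of \eqref{recovery}. Indeed, \eqref{bonus} applied to the segment of $x^{u,\xi,1}$ gives $G_{x_m^{u,\xi,1}, x_n^{u,\xi,1}} = B^\xi(x_m^{u,\xi,1}, x_n^{u,\xi,1})$, while the cocycle property \eqref{cocycle} telescopes to $B^\xi(x_m^{u,\xi,1}, x_n^{u,\xi,1}) = \sum_{i=m}^{n-1} B^\xi(y_i, y_{i+1})$. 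Recovery \eqref{recovery} yields $\w_{y_i} \le B^\xi(y_i, y_{i+1})$ for each $i$, and since $y_{m,n}$ is a geodesic $\sum_{i=m}^{n-1} \w_{y_i} = G_{x_m^{u,\xi,1}, x_n^{u,\xi,1}}$, forcing equality term by term.

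Next I would prove $y_k \cdot e_1 \le x_k^{u,\xi,1} \cdot e_1$ by induction on $k$. Write $a_k = y_k \cdot e_1$ and $b_k = x_k^{u,\xi,1} \cdot e_1$; both paths lie on the antidiagonal $\{p \in \Z^2 : p \cdot (e_1 + e_2) = x_m^{u,\xi,1} \cdot (e_1 + e_2) + (k - m)\}$ at time $k$, and the base case $k = m$ is immediate. For the inductive step, if $a_k < b_k$ then $a_{k+1} \le a_k + 1 \le b_k \le b_{k+1}$. If $a_k = b_k$ then the two paths coincide at $z := y_k = x_k^{u,\xi,1}$; the only delicate subcase is when $x^{u,\xi,1}$ steps in direction $e_2$ at $z$, since then I must show $y$ also steps in direction $e_2$. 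By the $j = 1$ tie-breaking convention, $x^{u,\xi,1}$ takes an $e_2$ step only when $B^\xi(z, z + e_2) < B^\xi(z, z + e_1)$ strictly, making $e_2$ the unique minimum-achieving direction at $z$. The first paragraph then forces $y_{k+1} - y_k = e_2$ as well, so $a_{k+1} = b_{k+1}$.

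The left-most claim for $x^{u,\xi,2}$ follows by the entirely symmetric argument with the roles of $e_1$ and $e_2$ interchanged and the $j = 2$ tie-breaking rule. I do not foresee a serious obstacle: once the first paragraph identifies every competing geodesic as a minimum-gradient path for $B^\xi$, the induction is a clean case analysis on whether the paths coincide and, if so, which direction $x^{u,\xi,1}$ takes. The only point worth flagging is that the strictness $B^\xi(z, z + e_2) < B^\xi(z, z + e_1)$ in the delicate subcase is supplied precisely by the deterministic $j = 1$ tie-breaking rule built into the definition of $x^{u,\xi,1}$.
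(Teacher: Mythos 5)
Your proof is correct and takes essentially the same route as the paper: both hinge on the fact that the $j=1$ tie-break makes $B^\xi(z,z+e_2)<B^\xi(z,z+e_1)$ strict whenever $x^{u,\xi,1}$ steps $e_2$, which together with \eqref{bonus} and recovery/cocycle (i.e.\ $G\le B^\xi$) forbids any geodesic between the same endpoints from branching off via $e_1$ at a common point. Your term-by-term equality $\w_{y_i}=B^\xi(y_i,y_{i+1})$ and the explicit induction on $k$ are just a cleaner packaging of the paper's local exclusion argument and its ``repeat at every intersection'' step.
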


\begin{proof}
We will prove the claim about $x_{m,n}^{u,\xi,1}$, the other one being symmetric. Abbreviate this path by writing $x_{m,n}$.
Take $y_{m,n}$ as in the claim. In particular, $y_m=x_m$. For starters we want to prove that $y_{m+1}\cdot e_1\le x_{m+1}\cdot e_1$.
For this, we only need to consider the case when $x_{m+1}=x_m+e_2$, for the inequality clearly holds in the other case.
Since we are using a superscript $j=1$ it cannot be that $B^\xi(x_m,x_m+e_1)=B^\xi(x_m,x_m+e_2)$, for otherwise the path would have taken an $e_1$-step out of $x_m$.
Since the path always takes a step along the smaller $B^\xi$ gradient and since $B^\xi$ recovers, we conclude that in the case at hand we have $\w_{x_m}=B^\xi(x_m,x_m+e_2)<B^\xi(x_m,x_m+e_1)$.

Now, recovery and the cocycle property imply that $G_{x,y}\le B^\xi(x,y)$ for any $x\le y$. Combine this with \eqref{bonus} and the cocycle property  again to get
	\begin{align*}
	\w_{x_m}+G_{x_m+e_1,x_n}
	&\le B^\xi(x_m,x_m+e_2)+B^\xi(x_m+e_1,x_n)\\
	&<B^\xi(x_m,x_m+e_1)+B^\xi(x_m+e_1,x_n)=B^\xi(x_m,x_n)=G_{x_m,x_n}.
	\end{align*}
Therefore, no geodesic from $x_m$ to $x_n$ can go through $x_m+e_1$ and we have $y_{m+1}=x_m+e_2=x_{m+1}$. 

Now repeat this argument every time $x_{m,n}$ and $y_{m,n}$ intersect to see that the latter never goes to the ``right'' of the former. The lemma is proved.
\end{proof}

One can squeeze the proof of the above lemma a little bit more to get the following interesting result.

\begin{theorem}\label{left}
{\rm\cite{Geo-Ras-Sep-17-ptrf-1}}
Make the same assumptions as in Theorem \ref{Bus:thm}. 
\begin{enumerate}[\ \ {\rm(}i{\rm)}]
\item\label{left-i} Fix $\xi\in\riUset$. With probability one and for all $u\in\Z^2$, $x_{0,\infty}^{u,\xi,1}$ is the right-most geodesic directed into $\Uset_\xi$ and
$x_{0,\infty}^{u,\xi,2}$ is the left-most geodesic directed into $\Uset_\xi$.  
\item\label{left-ii} With probability one and for any $u\in\Z^2$, every infinite geodesic out of $u$ stays between $x_{0,\infty}^{u,\xi,1}$ and $x_{0,\infty}^{u,\xi,2}$ for some $\xi\in\riUset$.
\end{enumerate}
\end{theorem}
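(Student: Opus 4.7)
The core of the plan is a single observation from which both parts follow by elementary combinatorics on paths: for \emph{any} infinite geodesic $y_{0,\infty}$ from $u$ that is asymptotically directed into $\Uset_\xi$, the Busemann function records its increments,
\begin{equation}\label{plan-eq}
\w_{y_k}=B^\xi(y_k,y_{k+1})\qquad\text{for every }k\ge0.
\end{equation}
Indeed, the geodesic property gives $G_{y_k,y_n}-G_{y_{k+1},y_n}=\w_{y_k}$ for every $n>k$, and since the sequence $\{y_n\}$ is itself directed into $\Uset_\xi$, passing to the limit in \eqref{B-limit} yields \eqref{plan-eq}. Combined with recovery \eqref{recovery}, this forces the step $y_{k+1}-y_k=e_i$ to achieve the minimum of $B^\xi(y_k,y_k+e_1)$ and $B^\xi(y_k,y_k+e_2)$. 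At sites where there is no tie the step is uniquely determined and coincides with the step taken by both $x_{0,\infty}^{u,\xi,1}$ and $x_{0,\infty}^{u,\xi,2}$; at tie sites $y$ may move either way, whereas by construction $x^{u,\xi,1}$ always chooses $e_1$ and $x^{u,\xi,2}$ always chooses $e_2$.

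For part \eqref{left-i}, I will then prove by induction on $n$ that
\[
x_n^{u,\xi,2}\cdot e_1\le y_n\cdot e_1\le x_n^{u,\xi,1}\cdot e_1.
\]
Since $y_0=x_0^{u,\xi,1}=x_0^{u,\xi,2}=u$ and each path has taken exactly $n$ up-right steps, all three points sit on the same antidiagonal $\{|z-u|_1=n\}$, so only the $e_1$-coordinates need to be compared. For the upper bound: if $y_n\cdot e_1<x_n^{u,\xi,1}\cdot e_1$ already, the inequality survives one more step since the $e_1$-coordinate of either path increments by at most $1$. If instead $y_n=x_n^{u,\xi,1}$, then either there is no tie at this common site (both paths take the same step) or there is a tie (in which case $x^{u,\xi,1}$ takes $e_1$ and $y$ takes $e_1$ or $e_2$, keeping $y_{n+1}\cdot e_1\le x_{n+1}^{u,\xi,1}\cdot e_1$). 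The lower bound is symmetric.

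For part \eqref{left-ii}, Theorem \ref{direction2} provides $\xi\in\Uset$ with $y$ asymptotically directed into $\Uset_\xi$; when $\xi\in\riUset$ part \eqref{left-i} applies directly and $y$ is sandwiched by the two Busemann geodesics associated to $\xi$. The one genuine obstacle is handling geodesics whose limit direction falls on the boundary of $\Uset$, since $B^\xi$ was constructed only for $\xi\in\riUset$. To deal with this, the natural route is to pick rational $\xi^{(m)}\in\riUset$ with $\xi^{(m)}\to e_1$ (or $e_2$) and note that by the monotonicity Lemma \ref{lm:monotone} the paths $x^{u,\xi^{(m)},1}$ are nested to the right; a boundary-directed geodesic can thus be bracketed by taking any one such $\xi^{(m)}$ close enough to the boundary. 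The key subtlety to verify is that the strict concavity of $g$ at the axes (Theorem \ref{th:Martin}) prevents pathological spreading of the Busemann geodesics, ensuring the sandwich survives for every $n$ rather than only in the tail; this, rather than the induction in part \eqref{left-i}, is where I expect the real work.
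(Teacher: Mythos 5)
Your part (i) is correct. The key identity $\w_{y_k}=B^\xi(y_k,y_{k+1})$ along any $\Uset_\xi$-directed geodesic is legitimate because Theorem \ref{Bus:thm} asserts the Busemann limit along \emph{every} sequence directed into $\Uset_\xi$, including the (random) geodesic itself; this is the same device the survey uses in Corollary \ref{B-not-cont}. Combined with recovery \eqref{recovery} it forces such a geodesic to follow minimal $B^\xi$-gradients, and your antidiagonal induction then gives the sandwich. (You should still invoke Lemma \ref{B-geo} and Theorem \ref{direction} so that $x^{u,\xi,1}$ and $x^{u,\xi,2}$ are themselves geodesics directed into $\Uset_\xi$, making ``right-most/left-most geodesic directed into $\Uset_\xi$'' nonvacuous.) The survey states Theorem \ref{left} without proof, citing \cite{Geo-Ras-Sep-17-ptrf-1}; relative to the nearby proved material, your route replaces the finite-path comparison of Lemma \ref{order} by the cleaner observation that directed geodesics recover the weights from $B^\xi$, which is perfectly in the spirit of the paper's toolkit.

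Part (ii) has a genuine gap, and your proposed repair of the boundary case cannot succeed. The path $u,u+e_1,u+2e_1,\dotsc$ is \emph{always} an infinite geodesic (each finite segment is the unique admissible path between its endpoints) and it is asymptotically directed at $e_1$. For every $\xi\in\riUset$, Theorem \ref{direction} makes $x^{u,\xi,1}$ directed into $\Uset_\xi$, which under the standing assumptions is a compact subset of $\riUset$; hence $x^{u,\xi,1}$ takes an $e_2$-step at some finite time, after which the axis geodesic lies strictly to its right forever. So no choice of $\xi^{(m)}$ near $e_1$ brackets it, even eventually, and strict concavity of $g$ near the axes (Theorem \ref{th:Martin}) is irrelevant to this obstruction: it is not a technicality but a real exclusion. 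The sandwich can only be asserted for geodesics directed into faces $\Uset_\xi$ with $\xi\in\riUset$; axis-directed geodesics must be excluded from the statement or treated by a separate argument, as is done in the cited reference, and the survey's phrasing glosses over this.

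A second, smaller gap: the direction furnished by Theorem \ref{direction2} is random and may be an irrational exposed point $\zeta$ with $\Uset_\zeta=\{\zeta\}$, while your part (i) holds on a full-probability event that depends on the \emph{fixed} $\xi$. Thus ``part (i) applies directly'' needs justification: either $\Uset_\zeta$ contains a rational direction (so a countable intersection of events suffices), or one must construct the Busemann process simultaneously in all directions by monotone limits in $\xi$ (which the survey does not develop, and which is delicate in view of Corollary \ref{B-not-cont}), or else one settles for trapping the geodesic between Busemann geodesics of two rational directions on either side, as in the proof of Theorem \ref{direction2} — but that is weaker than the single-$\xi$ statement claimed in part (ii).
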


%
%

\begin{proof}[Proof of Theorem \ref{direction2}]
First, observe that although we proved Theorem \ref{direction} and Lemma \ref{order} for a fixed $\xi\in\riUset$, they both hold simultaneously (i.e.\ with one null set thrown away) for all $\xi\in\riUset\cap\Q^2$, which is countable and dense in $\riUset$.

Assume that for some geodesic $x_{0,\infty}$, $x_n/n$ has  limit points in both $\Uset_\zeta$ and $\Uset_\eta$ with $\zeta,\eta\in\Uset$ and $\Uset_\zeta\ne\Uset_\eta$. We can assume
$\zeta\cdot e_1<\eta\cdot e_1$.
Since we have assumed $g$ to be differentiable, there must exist at least one (and in fact infinitely many) point(s) $\xi\in\riUset\cap\Q^2$ such that 
	\begin{align}\label{aux}
	\zeta\cdot e_1<\xi\cdot e_1<\eta\cdot e_1,\quad\Uset_\xi\not=\Uset_\zeta,\quad\text{and}\quad\Uset_\xi\not=\Uset_\eta.
	\end{align}
Let $x_0=u$ (the starting point of the geodesic under study).
Since we have shown that geodesic $x_{0,\infty}^{u,\xi,1}$ has asymptotic direction $\xi$, the ordering in \eqref{aux} implies that  $x_{0,\infty}$ passes infinitely often to the left of $x_{0,\infty}^{u,\xi,1}$.
But then Lemma \ref{order} implies that once the former goes strictly to the left of the latter, it has to remain (weakly) on that side forever. A similar argument shows that $x_{0,\infty}$ must also eventually
stay to the right of $x_{0,\infty}^{u,\xi,2}$. In other words, $x_{0,\infty}$ eventually stays between $x_{0,\infty}^{u,\xi,1}$ and $x_{0,\infty}^{u,\xi,2}$. But both these geodesics are directed into $\Uset_\xi$.
Hence, so is $x_{0,\infty}$, which contradicts the assumption that $x_n/n$ has limit points in $\Uset_\zeta$ and $\Uset_\eta$.
\end{proof}

Theorems \ref{direction} and \ref{direction2} have a very nice consequence when we know more about the regularity of $g$.

\begin{corollary}\label{nice-cor}
{\rm\cite{Geo-Ras-Sep-17-ptrf-1}}
Make the same assumptions as in Theorem \ref{Bus:thm}. Assume also that $g$ is strictly concave. Then 
\begin{enumerate}[\ \ {\rm(}i{\rm)}]
\item\label{cor1} For any given direction, with probability one, out of any given point,
 there exists an infinite geodesic  going in this direction:
	\[\forall\xi\in\Uset:\quad\P\big\{\forall u\in\Z^2\ \exists x_{0,\infty}\text{ geodesic }:x_n/n\to\xi\big\}=1;\]
\item\label{cor2} With probability one, every infinite geodesic has an asymptotic direction:
	\[\P\big\{\forall x_{0,\infty}\text{ geodesic }\ \exists\xi\in\Uset:x_n/n\to\xi\big\}=1.\]
\end{enumerate}
\end{corollary}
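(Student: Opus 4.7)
The plan is to reduce everything to one structural remark: under the strict concavity assumption, $\Uset_\xi = \{\xi\}$ for every $\xi \in \Uset$. For $\xi \in \{e_1, e_2\}$ the definition makes this automatic. For $\xi \in \riUset$, if some $\zeta \in \Uset_\xi$ were different from $\xi$, then $\xi/g(\xi)$ and $\zeta/g(\zeta)$ lie on a common closed face of $\cC$; that face is a convex subset of $\partial\cC = \{g = 1\}$, so $g \equiv 1$ on the full segment joining $\xi/g(\xi)$ and $\zeta/g(\zeta)$, contradicting strict concavity of $g$ on $\R_+^2$. Hence $\Uset_\xi = \{\xi\}$ on all of $\Uset$.

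With this reduction in hand, part (ii) is immediate from Theorem \ref{direction2}: that result already gives, almost surely, that every infinite geodesic $x_{0,\infty}$ is asymptotically directed into some $\Uset_\xi$, and with $\Uset_\xi$ a singleton this is the same as saying $x_n/n \to \xi$.

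For part (i), I would split on whether $\xi$ lies in the interior or on the boundary of $\Uset$. When $\xi \in \riUset$, Theorem \ref{direction} produces, on a single event of full probability, Busemann-generated geodesics $x^{u,\xi,1}_{0,\infty}$ from every $u \in \Z^2$ that are asymptotically directed into $\Uset_\xi = \{\xi\}$, which is precisely the statement $x^{u,\xi,1}_n/n \to \xi$. When $\xi = e_i$, $i \in \{1,2\}$, no Busemann function is needed: the deterministic path $x_n := u + n e_i$ is the unique up-right path between any two of its vertices, so each finite segment is trivially a geodesic and $x_n/n \to e_i$ for every $u$ and every $\w$. These two cases exhaust $\Uset$, completing the proof. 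The entire argument has no serious obstacle: the only nontrivial input is the reduction $\Uset_\xi = \{\xi\}$, after which Theorems \ref{direction} and \ref{direction2} do all the work.
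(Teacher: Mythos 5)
Your proposal is correct and follows essentially the same route as the paper, which justifies the corollary precisely by noting that strict concavity forces $\Uset_\xi=\{\xi\}$ for every $\xi\in\Uset$ and then invoking Theorems \ref{direction} and \ref{direction2}. Your explicit treatment of the boundary directions $e_1,e_2$ via the deterministic paths $u+ne_i$ is a small addition the paper leaves implicit, but it does not change the argument.
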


This simply follows from the fact that if $g$ is strictly concave, then $\Uset_\xi=\{\xi\}$ for all $\xi\in\Uset$.
Strict concavity is still an open question, but it is believed to hold in general, either when the maximum of $\w_0$ does not percolate or outside the flat segment that occurs when the maximum does percolate 
(see Theorem \ref{flat:thm}).\medskip

The claims in the above corollary appeared before  in Proposition 7 of \cite{Fer-Pim-05} for the solvable model where weights $\w_x$ are exponentially distributed. 
Note that in this case formula \eqref{shape:exp} gives an explicit expression for $g$ and we can  check directly that $g$ is indeed strictly concave. 

The approach used by \cite{Fer-Pim-05} follows the ideas of Licea and 
\index{Newman, Charles}%
Newman \cite{Lic-New-96} for nearest-neighbor 
\index{first-passage percolation (FPP)}%
\index{FPP}%
first-passage percolation (FPP).
In \cite{Lic-New-96} the authors assume a certain global curvature assumption on $g$ and use it to control how much infinite geodesics can wander, proving existence and directedness of infinite geodesics.
They also use a lack-of-space argument to prove 
\index{geodesic!coalescence}%
coalescence (i.e.\ merger) of geodesics with a given asymptotic direction. This is the only method known to date for proving coalescence of directional geodesics.
The same idea was adapted by \cite{Fer-Pim-05}  to the directed LPP model with exponential weights and then by \cite{Geo-Ras-Sep-17-ptrf-1}  to the general weights setting.

\begin{theorem}\label{coal:thm}
{\rm\cite{Geo-Ras-Sep-17-ptrf-1}}
Make the same assumptions as in Theorem \ref{Bus:thm}. Fix $\xi\in\riUset$. With probability one and for all $u,v\in\Z^2$ the right-most geodesics directed into $\Uset_\xi$ and starting at $u$ and at $v$ coalesce:
there exist $m,n\ge0$ such that $x_{m,\infty}^{u,\xi,1}=x_{n,\infty}^{v,\xi,1}$. The same claim holds for the left-most geodesics. 
\end{theorem}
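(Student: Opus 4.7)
The argument follows the planar Licea--Newman lack-of-space strategy, adapted to the Busemann-cocycle setting. I treat the right-most case; the left-most case is identical with the tie-breaking direction reversed.

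The first ingredient is a structural dichotomy: any two right-most $\xi$-geodesics $\gamma^u = x^{u,\xi,1}_{0,\infty}$ and $\gamma^v = x^{v,\xi,1}_{0,\infty}$ either meet at some vertex and coincide from that point onwards, or are vertex-disjoint for all time. This is because the inductive construction of $\gamma^{\cdot,\xi,1}$ from any vertex $w$ depends only on the global cocycle $B^\xi$ at sites $\ge w$ and on the tie-breaking convention ``prefer $e_1$'', not on the base point. Hence ``coalesce'' is equivalent to ``ever share a vertex''.

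Next, assume for contradiction that $p = \P\{\gamma^0 \text{ and } \gamma^{e_2} \text{ do not coalesce}\} > 0$. Translation invariance of $\P$ gives $\P\{\gamma^{ke_2} \text{ and } \gamma^{(k+1)e_2} \text{ do not coalesce}\} = p$ for every $k$, and ergodicity of the product measure $\P$ under every lattice shift, combined with the ergodic theorem applied to $\eta_k = \one\{\gamma^{ke_2}, \gamma^{(k+1)e_2} \text{ do not coalesce}\}$, yields $L^{-1}\sum_{k=0}^{L-1}\eta_k \to p$ almost surely. For large $L$ this produces, with probability close to one, at least $pL/2$ distinct coalescence classes among $\gamma^{0}, \gamma^{e_2}, \ldots, \gamma^{Le_2}$; by the dichotomy and Theorem \ref{direction}, their common tails form $\sim pL/2$ pairwise vertex-disjoint infinite up-right paths, each asymptotically directed into $\Uset_\xi$.

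The contradiction is spatial. Cutting with a far antidiagonal $D_N = \{x : x\cdot(e_1+e_2) = N\}$, each of the $\sim pL/2$ tails crosses $D_N$ at exactly one lattice point, and disjointness forces these crossings to be distinct. The task is to show that the window on $D_N$ in which these crossings can lie has width (in lattice points) asymptotically smaller than $pL/2$. This is the technical core and is where I expect the main obstacle: a naive window based on the asymptotic direction alone has width of order $L$ and does not close the pigeonhole. The refinement is to sandwich the right-most $\xi$-geodesics between right-most $\zeta$-geodesics and right-most $\eta$-geodesics for directions $\zeta, \eta \in \Uset_0 \cap \riUset$ satisfying $\zeta\cdot e_1 < \xi\cdot e_1 < \eta\cdot e_1$ and arbitrarily close to $\xi$, using the monotone coupling $(B^\eta)_{\eta\in\Uset_0}$ from Section \ref{fixed-pt} and the crossing inequalities of Lemma \ref{lm:monotone}. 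The differentiability of $g$ at $\xi$ ensures that as $\zeta, \eta \to \xi$ the sandwich collapses at a quantifiable rate. Combined with the identification $G_{ke_2, y} = B^\xi(ke_2, y)$ along each class tail (equation \eqref{bonus}) and the cocycle shape theorem (Theorem \ref{B-shape:thm}), this forces the admissible window on $D_N$ to a size incompatible with $pL/2$ disjoint crossings for $L$ large and $N \gg L$, yielding the required contradiction.
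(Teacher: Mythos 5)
There is a genuine gap, and it sits exactly where you flag it: the pigeonhole at the antidiagonal cannot be closed by any directional refinement. From non-coalescence with probability $p>0$ you correctly extract, via the shift-ergodic theorem and planarity, on the order of $pL$ pairwise disjoint semi-infinite geodesics emanating from the segment $\{0,e_2,\dots,Le_2\}$, all asymptotically directed into $\Uset_\xi$. But this configuration is not in itself contradictory: disjoint up-right paths can run "parallel" at bounded spacing forever while sharing the same asymptotic direction, so sharing a direction imposes no upper bound on how many disjoint such paths fit. Concretely, any antidiagonal $D_N$ far enough out for the directedness to bite (which is only an asymptotic statement, so one needs $N\gg L$) contains, even inside a window of sublinear width $o(N)$ around $N\xi$, far more than $pL$ lattice points; and at scales $N\sim L$ the directedness gives no quantitative control at all. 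Sandwiching by $B^\zeta$- and $B^\eta$-geodesics with $\zeta,\eta\to\xi$ does not help: the monotone coupling controls where a single geodesic lies relative to geodesics of nearby directions, not the mutual spacing of distinct disjoint geodesics of the same direction, and differentiability of $g$ yields no rate for the collapse of the sandwich. So there is no regime of $N$ versus $L$ in which "number of disjoint crossings $\sim pL$" exceeds "number of admissible sites on $D_N$", and the contradiction never materializes.

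The missing ingredient is the Licea--Newman local modification step, which is the heart of the argument the paper sketches. One first upgrades the assumption to three pairwise nonintersecting geodesics, then modifies the weights locally so that, with positive probability, the middle geodesic of the triple becomes a geodesic that is disjoint from \emph{every} geodesic emanating from sufficiently far away. By the ergodic theorem such "shielded" starting points have a positive density $\delta$ in the plane, so an $L\times L$ box contains at least $\delta L^2$ of them, producing $\delta L^2$ pairwise disjoint infinite paths started inside the box. These must exit through the box boundary, which has only $O(L)$ sites --- a two-dimensional count against a one-dimensional one, which is what actually yields the contradiction. Your construction only ever produces a one-dimensional family (order $L$ disjoint paths from $L$ starting points), which is compatible with everything; without the local-modification/shielding idea and the resulting $L^2$ versus $L$ comparison, the lack-of-space argument cannot be completed.
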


Here is a very rough and high level sketch of how such a coalescence result is proved. Details can be found in Appendix A of \cite{Geo-Ras-Sep-15} (which is an extended version of \cite{Geo-Ras-Sep-17-ptrf-2}).
See also the proof of Theorem 2.3 in \cite{ch:Hanson}.
First observe that if $x_{0,\infty}^{u,\xi,1}$ and $x_{0,\infty}^{v,\xi,1}$ ever intersect, then from there on they follow the same evolution (smallest $B^\xi$ increment and increment $e_1$ in case of a tie).
Therefore, the task is really to prove that they eventually intersect.
By stationarity the assumption of two nonintersecting geodesics implies we can find at least three nonintersecting ones.  
A   local modification of the weights  turns  the middle geodesic of the triple   into a geodesic that stays disjoint from all geodesics that emanate from sufficiently far away.   By stationarity again  
at least $\delta L^2$ such disjoint geodesics   emanate from an $L\times L$ square.  This gives a contradiction because there are only $2L$ boundary points for these geodesics to exit through.  \medskip
 
\begin{corollary}\label{B-not-cont}
Make the same assumptions as in Theorem \ref{Bus:thm}. Then with probability one
	\[\{B^\xi(0,e_1):\xi\in\riUset\}\subset\{G_{0,z}-G_{e_1,z}:z\in e_1+\Z_+^2\}.\]
In particular the map $\xi\mapsto B^\xi(0,e_1)$ cannot be continuous on all of $\riUset$.
\end{corollary}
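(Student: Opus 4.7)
The plan is to use the coalescence of directional geodesics (Theorem \ref{coal:thm}) to pin each Busemann gradient $B^\xi(0,e_1)$ to one of the countably many values in the random set
\begin{equation*}
\mathcal{A}:=\{G_{0,z}-G_{e_1,z}:z\in e_1+\Z_+^2\},
\end{equation*}
and then to deduce the failure of continuity from the elementary topological fact that a continuous map from a connected set into a countable subset of $\R$ must be constant.

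For the inclusion, I fix $\xi\in\riUset$ and apply Theorem \ref{coal:thm} with $u=0$ and $v=e_1$. Almost surely the right-most $\Uset_\xi$-directed geodesics $x_{0,\infty}^{0,\xi,1}$ and $x_{0,\infty}^{e_1,\xi,1}$ coalesce at a random vertex $z$, and $z\in e_1+\Z_+^2$ because the second geodesic starts at $e_1$ and is up-right. Both geodesics advance along the smallest $B^\xi$-gradient, so equation \eqref{bonus} yields $B^\xi(0,z)=G_{0,z}$ and $B^\xi(e_1,z)=G_{e_1,z}$, and the cocycle property \eqref{cocycle} gives
\begin{equation*}
B^\xi(0,e_1)=B^\xi(0,z)-B^\xi(e_1,z)=G_{0,z}-G_{e_1,z}\in\mathcal{A}.
\end{equation*}
Intersecting the corresponding full-measure events over the countable set $\riUset\cap\Q^2$ produces a single event on which $B^\xi(0,e_1)\in\mathcal{A}$ for every rational $\xi$. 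By the last line of Theorem \ref{Bus:thm}, $B^\xi$ depends on $\xi$ only through the face $\Uset_\xi$, and concavity of $g$ permits only countably many nondegenerate faces (each containing a rational), so the inclusion extends to every $\xi$ whose face is nondegenerate. For $\xi$ in a singleton face with irrational $\xi$ one applies the same coalescence argument to $\xi$ directly, and then uses the monotonicity of Lemma \ref{lm:monotone} together with the rational-to-general limit characterization of $B^\xi$ from the proof of Theorem \ref{Bus:thm} to pass to a single event of full measure carrying the inclusion for all $\xi\in\riUset$ at once.

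For the final sentence, suppose toward contradiction that $\xi\mapsto B^\xi(0,e_1)$ is continuous on $\riUset$ on an event of positive probability. Intersecting with the inclusion event, the image of this continuous map on the connected set $\riUset$ is a connected subset of $\R$ contained in the countable set $\mathcal{A}$, so it must be a singleton. However, the first equation in \eqref{B-g} together with Martin's asymptotic \eqref{martin} shows that $\E[B^\xi(0,e_1)]=e_1\cdot\nabla g(\xi)$ ranges over $(m_0,\infty)$ as $\xi$ traverses $\riUset$ between $e_1$ and $e_2$, so the map cannot be almost surely constant — a contradiction. The main technical obstacle is the uniformity step above: upgrading the per-$\xi$ coalescence event into a single full-measure event on which the inclusion holds simultaneously for every $\xi\in\riUset$, in particular for irrational $\xi$ in singleton faces. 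Once this uniform statement is in hand, the topological finish is immediate.
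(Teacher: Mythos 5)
Your proof is, in its core, the paper's own argument. For each rational direction the paper applies Theorem \ref{coal:thm} to the right-most geodesics out of $0$ and $e_1$ and identifies $B^\xi(0,e_1)=G_{0,z^\xi}-G_{e_1,z^\xi}$ at the coalescence point $z^\xi$; it reads this off from the Busemann limit \eqref{B-limit} evaluated along the coalesced geodesics, which is the same computation as your derivation via \eqref{bonus} and the cocycle property \eqref{cocycle}. The topological finish (a map on the connected set $\riUset$ with values in a countable set would be constant, contradicting the fact that $\E[B^\xi(0,e_1)]=e_1\cdot\nabla g(\xi)$ is nonconstant by \eqref{martin}) is left implicit in the paper but is exactly what is intended.

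Where you diverge is the passage from rational to general directions, which you rightly flag as the delicate step; however, your proposed repair for irrational $\xi$ in singleton faces, namely running the coalescence argument at $\xi$ itself, cannot produce the simultaneous statement, since it reintroduces a separate null set for each of uncountably many directions, which is precisely the difficulty you are trying to remove. The paper's device is different and lighter: having the inclusion for all rational directions on one event, it extends to every $\xi\in\riUset$ by Lemma \ref{lm:monotone} alone, using that the inequalities \eqref{monotone} hold on a single full-measure event simultaneously for all rational pairs, so that $B^\xi(0,e_1)$ is controlled by the already-identified values $B^\zeta(0,e_1)$ and $B^\eta(0,e_1)$ for rationals $\zeta,\eta$ straddling $\xi$ (the proof literally ends with ``the claim now follows from monotonicity''); no coalescence argument outside the countable skeleton appears. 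A final small point: your concluding contradiction only rules out the map being almost surely constant, hence almost surely continuous; the mean identity \eqref{B-g} does not by itself exclude constancy on an event of merely positive probability, so either state the conclusion as failure of almost-sure continuity (which is how the corollary is used) or add an argument for the positive-probability case.
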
 

\begin{proof}
For each $\xi\in\riUset\cap\Q^2$ consider geodesics $x_{0,\infty}^{0,\xi,1}$ and $x_{0,\infty}^{e_1,\xi,1}$. Theorem \ref{direction} says that with probability one these geodesics are asymptotically directed into $\Uset_\xi$.
Then Theorem \ref{Bus:thm} implies that almost surely and for all $\xi\in\riUset\cap\Q^2$
	\[B^\xi(0,e_1)=\lim_{n\to\infty}(G_{0,x_n^{0,\xi,1}}-G_{e_1,x_n^{0,\xi,1}}).\]
By Theorem \ref{coal:thm} the two geodesics $x_{0,\infty}^{0,\xi,1}$ and $x_{0,\infty}^{e_1,\xi,1}$ coalesce at say a point we denote by $z^\xi$. This leads to
	\[B^\xi(0,e_1)=G_{0,z^\xi}-G_{e_1,z^\xi}.\]
Therefore, we have almost surely
	\[\{B^\xi(0,e_1):\xi\in\riUset\cap\Q^2\}\subset\{G_{0,z}-G_{e_1,z}:z\in e_1+\Z_+^2\}.\]
The claim now follows from monotonicity \eqref{lm:monotone}. 
\end{proof}
 
Note that the above result does not contradict \eqref{B-cont}. Together, the two results say that there is zero probability that a given (fixed) $\xi\in\riUset$ happens to be a discontinuity point of $\xi\mapsto B^\xi(0,e_1)$.\medskip

When the weights have a continuous distribution one has a 
\index{geodesic!unique}%
{\sl unique} geodesic between any two given points. What about infinite directional geodesics?
The answer is also in the positive.

\begin{theorem}\label{unique:thm}
{\rm\cite{Geo-Ras-Sep-17-ptrf-1}}
Make the same assumptions as in Theorem \ref{Bus:thm}.  Assume also that $\w_0$ has a continuous distribution.
Fix $\xi\in\riUset$. Then with probability one, out of any $u\in\Z^2$, there exists a unique infinite geodesic directed into $\Uset_\xi$.
\end{theorem}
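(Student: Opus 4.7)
The plan is to prove uniqueness by showing that the right-most and left-most infinite geodesics from $u$ directed into $\Uset_\xi$ coincide almost surely. By Theorem \ref{left}(\ref{left-ii}), every infinite geodesic out of $u$ directed into $\Uset_\xi$ is sandwiched between $x^{u,\xi,1}_{0,\infty}$ and $x^{u,\xi,2}_{0,\infty}$, so their coincidence immediately gives uniqueness. Since $\Z^2$ is countable, a single null set will work for all starting points $u$.

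The first ingredient is that almost surely, finite geodesics between lattice points are unique. For any fixed pair $x\le y$, there are only finitely many up-right paths from $x$ to $y$, and any two distinct such paths disagree on at least one vertex; hence the difference of their passage-time totals has a continuous distribution and is nonzero with probability one. A union bound over the finite set of path pairs, then over the countable set of endpoint pairs, yields almost sure uniqueness of finite geodesics throughout $\Z^2$.

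The core step is a branching-versus-coalescence contradiction. Suppose on an event of positive probability that $x^{u,\xi,1}_{0,\infty}\neq x^{u,\xi,2}_{0,\infty}$. Since both start at $u$ and are right-most and left-most respectively, they must first disagree at some random lattice point $z$, with $x^{u,\xi,1}$ stepping to $z+e_1$ and $x^{u,\xi,2}$ stepping to $z+e_2$. Because both are produced by the Busemann mechanism, the branching forces $B^\xi(z,z+e_1)=B^\xi(z,z+e_2)=\w_z$. Now apply Theorem~\ref{coal:thm} to the right-most geodesics $x^{z+e_1,\xi,1}_{0,\infty}$ and $x^{z+e_2,\xi,1}_{0,\infty}$: they coalesce at some lattice point $w$. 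This produces two distinct up-right paths from $z$ to $w$, one via $z+e_1$ and one via $z+e_2$. Both are finite segments of infinite paths that at every vertex follow a smallest $B^\xi$ increment (the equality at $z$ makes both $e_1$ and $e_2$ admissible there), so by Lemma~\ref{B-geo} both are geodesics from $z$ to $w$. This contradicts the uniqueness of finite geodesics established above.

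The main obstacle is the verification that the $e_2$-branch path from $z$ followed by the \emph{right-most} geodesic from $z+e_2$ actually is a geodesic from $z$; this is where the equality $B^\xi(z,z+e_1)=B^\xi(z,z+e_2)$ is essential, as it is exactly what licenses Lemma~\ref{B-geo} along the first step of the alternate route. Once this is secured, the contradiction with a.s.\ uniqueness of finite geodesics is immediate, concluding that $x^{u,\xi,1}_{0,\infty}=x^{u,\xi,2}_{0,\infty}$ almost surely; a countable union over $u\in\Z^2$ then yields the theorem.
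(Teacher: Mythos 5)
Your proof is correct and takes essentially the same route as the paper: reduce via Theorem \ref{left} to showing $x^{u,\xi,1}_{0,\infty}=x^{u,\xi,2}_{0,\infty}$, observe that a separation forces a Busemann tie at the branch point, and combine the coalescence Theorem \ref{coal:thm} with Lemma \ref{B-geo} (equivalently \eqref{bonus}) to produce two distinct up-right paths between the same endpoints carrying equal weight, which has probability zero for continuous weights. The only cosmetic difference is that the paper applies coalescence to $x^{u,\xi,1}_{0,\infty}$ and $x^{u+e_2,\xi,1}_{0,\infty}$ and computes the equal passage times directly from \eqref{bonus} and the cocycle property, whereas you phrase the contradiction as violating almost sure uniqueness of finite geodesics; these are the same fact.
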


\begin{proof}
In view of Theorem \ref{left}\eqref{left-ii}, it is enough to show that $x_{0,\infty}^{u,\xi,1}=x_{0,\infty}^{u,\xi,2}$.
This in turn follows from the coalescence result. Indeed, assume the two geodesics do not match. We can assume that they separate right away, otherwise just consider the paths starting at the separation point.
Under this assumption, it must be the case that $\w_u=B^\xi(u,u+e_1)=B^\xi(u,u+e_2)$, for otherwise both geodesics would have followed the smaller $B^\xi$-gradient and thus stayed together.
Now, the above coalescence result implies that $x_{0,\infty}^{u,\xi,1}$ and $x_{0,\infty}^{u+e_2,\xi,1}$ will eventually coalesce, say at point $v=x_n^{u,\xi,1}=x_{n-1}^{u+e_2,\xi,1}$.
But then applying \eqref{bonus} we would have
	\[\sum_{i=0}^{n-1}\w(x_i^{u,\xi,1})=B^\xi(u,v)=B^\xi(u,u+e_2)+B^\xi(u+e_2,v)=\w_u+\sum_{i=0}^{n-2}\w(x_i^{u+e_2,\xi,1}),\]
which says that the weights add up to the same amount along two different paths. This happens with zero probability if weights have a continuous distribution. Hence the two geodesics can never separate and 
the theorem is proved. (We used $\w(x)$ to denote $\w_x$, for aesthetic reasons.)
\end{proof}

One of the things Theorem \ref{unique:thm} is really saying is that one should not need to worry about breaking ties among $B^\xi$ gradients. Let us spell this out as a separate result.

\begin{theorem}\label{tie:thm}
{\rm\cite{Geo-Ras-Sep-17-ptrf-1}}
Make the same assumptions as in Theorem \ref{Bus:thm}.  Assume also that $\w_0$ has a continuous distribution.
Fix $\xi\in\riUset$. Then $\P\{\exists u: B^\xi(u,u+e_1)=B^\xi(u,u+e_2)\}=0.$
\end{theorem}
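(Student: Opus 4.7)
My plan is to reduce to a single-vertex statement and then derive a contradiction between equality of the two Busemann increments at the origin and the almost sure uniqueness of finite geodesics under a continuous weight distribution. By the shift covariance \eqref{shift} and the $\P$-invariance under the shifts $T_z$, the probability $\P\{B^\xi(u,u+e_1)=B^\xi(u,u+e_2)\}$ does not depend on $u\in\Z^2$. A countable union bound over $u\in\Z^2$ therefore reduces the theorem to showing that
\[\P\{B^\xi(0,e_1)=B^\xi(0,e_2)\}=0.\]

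First I would assemble a full-probability ``good'' event $\Omega_0$ on which the following all hold simultaneously: (i) Theorem \ref{coal:thm} applies to every pair of starting lattice points; (ii) the recovery identity \eqref{recovery} holds and consequently equation \eqref{bonus} is valid along the geodesics $x^{u,\xi,j}_{0,\infty}$; and (iii) between any two lattice sites $x\le y$, every two distinct up-right paths have distinct total weights. Claim (iii) is the key ingredient: for any two such distinct paths $\pi\ne\pi'$, the vertex sets (excluding the terminal vertex) differ, so the difference of their total weights splits as the difference of two sums over disjoint, nonempty sets of i.i.d.\ continuous weights, hence has a continuous distribution and is nonzero almost surely. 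A countable union over the countably many relevant quadruples $(x,y,\pi,\pi')$ gives (iii), which in particular implies the uniqueness of finite geodesics between any two lattice points on $\Omega_0$.

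Now I would work on $\Omega_0$ and assume for contradiction that $B^\xi(0,e_1)=B^\xi(0,e_2)$. By \eqref{recovery}, both equal $\omega_0$. Then the tie-breaking convention in the construction of $x^{0,\xi,1}_{0,\infty}$ forces $x^{0,\xi,1}_1=e_1$. Consider also the right-most $\xi$-directed geodesic $x^{e_2,\xi,1}_{0,\infty}$ starting at $e_2$; by (i) it coalesces with $x^{0,\xi,1}_{0,\infty}$ at some first common point $v=x^{0,\xi,1}_N=x^{e_2,\xi,1}_M$, where necessarily $N=M+1$. Set
\[\pi_1=(x^{0,\xi,1}_0,x^{0,\xi,1}_1,\dotsc,x^{0,\xi,1}_N),\quad \pi_2=(0,x^{e_2,\xi,1}_0,x^{e_2,\xi,1}_1,\dotsc,x^{e_2,\xi,1}_M);\]
both are up-right paths from $0$ to $v$, and they are distinct because their first steps differ. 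Applying \eqref{bonus} to $x^{0,\xi,1}$ gives that the total weight of $\pi_1$ equals $B^\xi(0,v)$; applying \eqref{bonus} to $x^{e_2,\xi,1}$ and combining with the cocycle property \eqref{cocycle} and the assumed equality $\omega_0=B^\xi(0,e_2)$ gives that the total weight of $\pi_2$ equals $\omega_0+B^\xi(e_2,v)=B^\xi(0,e_2)+B^\xi(e_2,v)=B^\xi(0,v)$. Hence both $\pi_1$ and $\pi_2$ are geodesics from $0$ to $v$, contradicting (iii).

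The only non-routine input is the coalescence statement of Theorem \ref{coal:thm}, which is the deep ingredient underlying the Licea--Newman type lack-of-space argument; everything else is a light repackaging of the computation already appearing in the proof of Theorem \ref{unique:thm}. I do not anticipate any further obstacle.
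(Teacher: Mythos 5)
Your proposal is correct and follows essentially the same route as the paper: the paper deduces Theorem \ref{tie:thm} from Theorem \ref{unique:thm}, whose proof is exactly your argument — a tie at $u$ plus coalescence of $x^{u,\xi,1}_{0,\infty}$ and $x^{u+e_2,\xi,1}_{0,\infty}$ yields, via \eqref{bonus} and the cocycle property, two distinct up-right paths with equal total weight, impossible almost surely for continuous weights. Your only additions (the shift-invariance union bound over $u$ and the explicit full-measure event for distinct paths having distinct weights) are routine unpackings of what the paper leaves implicit.
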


\begin{proof}
The proof is quite simple: when a tie happens at $u$ geodesics
$x_{0,\infty}^{u,\xi,1}$ and $x_{0,\infty}^{u,\xi,2}$ separate right away. Since we just showed this cannot happen when weights have a continuous distribution, the theorem follows.
\end{proof}

As the last result of this section we address existence doubly-infinite geodesics (or rather lack thereof). 

\index{geodesic!doubly-infinite}%
\index{bigeodesic}%
\begin{theorem}\label{double:thm}
{\rm\cite{Geo-Ras-Sep-17-ptrf-1}}
Make the same assumptions as in Theorem \ref{Bus:thm}.  Assume also that $\w_0$ has a continuous distribution.
Fix $\xi\in\riUset$. Then
	\[\P\big\{\exists x_{-\infty,\infty}\text{ geodesic}:x_{0,\infty}\text{ is directed into }\Uset_\xi\big\}=0.\]
\end{theorem}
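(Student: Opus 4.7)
The plan is a proof by contradiction using a Burton–Keane style counting argument. Suppose $\P\{\exists x_{-\infty,\infty}\text{ geodesic with } x_{0,\infty}\text{ directed into }\Uset_\xi\}>0$. Since this event is invariant under all lattice translations $T_z$, $z\in\Z^2$, and $\P$ is ergodic with respect to these translations (the weights are i.i.d.), the event in fact has probability $1$. By Theorems~\ref{unique:thm} and~\ref{tie:thm} (the latter using continuity of $\w_0$), almost surely every $u\in\Z^2$ has a unique forward infinite geodesic $\gamma^u$ directed into $\Uset_\xi$, determined step-by-step by the rule: from $v$, take the edge along the strictly smaller of $B^\xi(v,v+e_1)$ and $B^\xi(v,v+e_2)$. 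Set $\pi(u):=\gamma^u_1\in\{u+e_1,u+e_2\}$. By Theorem~\ref{coal:thm} the directed graph $u\mapsto\pi(u)$ coalesces into a single up-right tree $\cT$ on $\Z^2$ oriented toward $+\infty$. A bi-infinite geodesic with forward part in $\Uset_\xi$ passes through $u$ if and only if $u$ lies in
$\mathcal{I}:=\{u\in\Z^2:\exists(u_{-k})_{k\ge0},\ u_0=u,\ \pi(u_{-k})=u_{-(k-1)}\text{ for all }k\ge1\}$,
the ``deep part'' of $\cT$; by construction $\mathcal{I}$ is an upward-closed subtree of $\cT$.

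The probability-$1$ event guarantees $\mathcal{I}\neq\varnothing$ almost surely. By translation invariance of the law of $\mathcal{I}$ and countable subadditivity, the density $\rho:=\P(0\in\mathcal{I})$ must be strictly positive, for else $\P(\mathcal{I}\neq\varnothing)\le\sum_{u\in\Z^2}\P(u\in\mathcal{I})=0$. By the ergodic theorem $\mathcal{I}$ then occupies a positive fraction $\rho$ of every large box, a.s.

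Now I apply a Burton–Keane argument to derive a contradiction. Call $u\in\mathcal{I}$ a \emph{trifurcation} if both $u-e_1\in\mathcal{I}$ and $u-e_2\in\mathcal{I}$: at such a $u$, removing $u$ splits $\mathcal{I}$ into three infinite pieces (the forward trunk through $\pi(u)$, and the two backward subtrees rooted at $u-e_1$ and $u-e_2$, each infinite by König's lemma). If the density of trifurcations is positive, then the box $\Lambda_N:=[-N,N]^2$ contains $\Omega(N^2)$ trifurcations, and for each one we obtain three pairwise vertex-disjoint infinite rays in $\mathcal{I}$. The classical planar Burton–Keane counting then shows these rays collectively use $\Omega(N^2)$ distinct exit sites on $\partial\Lambda_N$, which has only $O(N)$ sites — a contradiction. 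Hence trifurcations have density $0$, so $\mathcal{I}$ is almost surely trifurcation-free. Being an infinite subtree of $\cT$ with a single forward end and no branching, $\mathcal{I}$ is then a single bi-infinite up-right path in $\Z^2$, which has density $0$; this contradicts $\rho>0$, completing the proof.

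The main obstacle is formalizing the planar Burton–Keane counting inside the directed tree $\mathcal{I}$: one must check that across all $\Omega(N^2)$ trifurcations in $\Lambda_N$, the associated triples of disjoint infinite rays together use $\Omega(N^2)$ distinct boundary sites, despite the fact that distant trifurcations may share their common forward trunk. The key observation is that each trifurcation additionally contributes \emph{two} backward rays lying in vertex-disjoint subtrees of $\mathcal{I}$; these backward rays from distinct trifurcations cannot all merge, and planarity of $\Z^2$ combined with the tree structure of $\mathcal{I}$ forces them to leave $\Lambda_N$ through distinct sites. This parallels the classical proof of uniqueness of the infinite cluster in Bernoulli percolation, adapted here to the directed planar tree produced by the $B^\xi$-cocycle.
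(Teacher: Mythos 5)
Your argument is correct and is essentially the paper's own proof: the paper likewise assumes existence with positive probability, upgrades it by stationarity, uses coalescence (Theorem \ref{coal:thm}) together with continuity of the weights to produce, at positive density, branch points of the coalescing family of such bi-infinite geodesics, and then reaches the very contradiction you describe --- its ``embedding of a binary tree with positive vertex density, impossible because the tree grows exponentially while the boundary of a box in $\Z^2$ grows only linearly'' is exactly your Burton--Keane count of disjoint infinite arms against the $O(N)$ boundary of $\Lambda_N=[-N,N]^2$. Two small repairs to your write-up: a trifurcation should mean a genuine branch vertex of the Busemann tree, i.e.\ $\pi(u-e_1)=\pi(u-e_2)=u$ rather than merely $u-e_1,u-e_2\in\mathcal{I}$ (otherwise removing $u$ need not split off three infinite pieces), and the counting step you flag as the main obstacle needs no planarity --- the standard Burton--Keane lemma (in each finite component of $\mathcal{I}\cap\Lambda_N$ the number of degree-three vertices is at most the number of leaves minus two, and every leaf of a component whose arms are infinite must sit at an exit from the box) already handles the shared forward trunks.
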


\begin{proof}[Sketch of the proof]
If such a doubly-infinite geodesic existed, with positive probability, then by stationarity we would have another (different) one $y_{-\infty,\infty}$, also directed into $\Uset_\xi$.
By the coalescence result we would have that $x_{0,\infty}$ and $y_{0,\infty}$ coalesce. Modulo renumbering the indices, we can assume that $x_{0,\infty}=y_{0,\infty}$
but $x_{-1}\ne y_{-1}$. Because weights have a continuous distribution, it cannot be that $x_{-n}=y_{-n}$ for any $n>1$ (otherwise the weights would add up to the same amount $G_{x_{-n},0}$ along
two different paths $x_{-n,0}$ and $y_{-n,0}$).   We thus have a bi-infinite three-armed ``fork'' embedded in $\Z^2$. But by stationarity this picture will repeat infinitely often, allowing us to embed a binary tree into $\Z^2$, with its vertices having 
a positive density.  This embedding is not possible and thus we have a contradiction. (Such a tree grows exponentially fast, while the boundary of a box in $\Z^2$ 
grows only linearly in the diameter of the box.)
See Figure \ref{bi-infinite:fig} for an illustration and \cite{Geo-Ras-Sep-17-ptrf-1} for the details.
\end{proof}

\begin{figure}
	\begin{center}
		\begin{tikzpicture}[ >=latex, scale=0.5]

			\draw[line width=1pt](-2,-0.5)--(-2,0)--(2,0)--(2,1)--(3,1)--(3,2)--(5,2)--(5,3)--(8,3)--(8,4)--(10,4);
			\draw[line width=1pt](-1,-1.5)--(-1,-1)--(0,-1)--(0,0);
			\draw[line width=1pt](8,0.5)--(8,1)--(9,1)--(9,4);
			\draw[line width=1pt](-0.5,-4)--(1,-4)--(1,-2)--(4,-2)--(4,-1)--(5,-1)--(5,1)--(6,1)--(6,3);
			\draw[line width=1pt](2.5,-5)--(3,-5)--(3,-2);
			
			\draw [line width=1pt,->] plot [smooth] coordinates {(10,4) (10.3,4.2) (10.5,5) (11.5,5) (12.5,6)};
			\draw [line width=1pt,->] plot [smooth] coordinates {(2.5,-5) (1.8,-5.1) (1.7,-5.8) (1,-6.5)};
			\draw [line width=1pt,->] plot [smooth] coordinates {(-0.5,-4) (-1.2,-4.1) (-1.3,-4.8) (-2,-5)};
			\draw [line width=1pt,->] plot [smooth] coordinates {(-1,-1.5) (-1.2,-2.5) (-1.9,-2.7)};
			\draw [line width=1pt,->] plot [smooth] coordinates {(-2,-0.5) (-2.2,-1.5) (-3.3,-1.7) (-3.5,-2.4) (-4.5,-2.5)};
			\draw [line width=1pt,->] plot [smooth] coordinates {(8,0.5) (7.9,-0.5) (6.9,-1.5)};
			
			\draw[fill =black](0,0)circle(2mm);
			\draw[fill =black](3,-2)circle(2mm);
			\draw[fill =black](6,3)circle(2mm);
			\draw[fill =black](6,3)circle(2mm);
			\draw[fill =black](9,4)circle(2mm);
			
			\draw(12,4.5) node[inner sep=0.6pt,sloped,rotate=30]{\small into $\Uset_\xi$};

		\end{tikzpicture}
	\end{center}
	\caption{\small  The tree of bi-infinite geodesics. Bullets mark the triple split points. They have a positive density.}
	\label{bi-infinite:fig}
\end{figure}
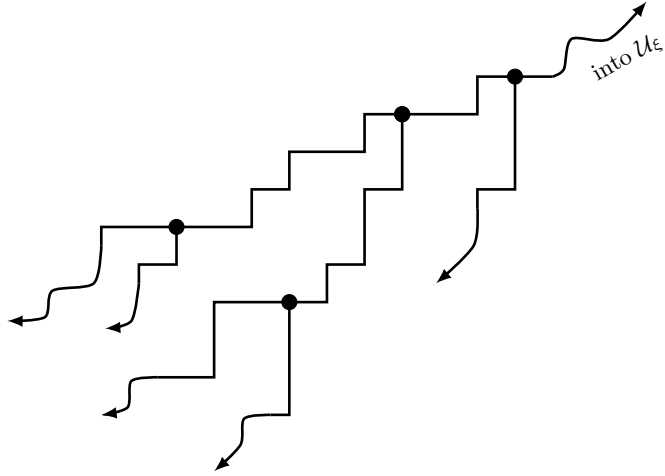


The proof of the above theorem used the coalescence result, which requires us to fix the direction of the geodesics. But we will see in 
the next section that there are random directions in which there are multiple geodesics out of say the origin. 
So are there then doubly-infinite geodesics in these random directions? The answer is still expected to be in the negative, but a proof remains
elusive.



\section{The competition interface}\label{cif:sec}
\index{competition interface}%
 
We continue to assume  the conditions of Theorem \ref{Bus:thm} to be satisfied. In particular, the shape $g$ is still assumed differentiable on $(0,\infty)^2$.
Let us also assume in this section that weights $\w_x$ have a continuous distribution. 

Recall our earlier definition of the competition interface separating the two geodesic trees rooted at $e_1$ and $e_2$ (see Figure \ref{CIF:fig}).
Denote this up-right path of sites in $\Z^2+(e_1+e_2)/2$ by $\varphi_n$. In particular, $\varphi_0=(e_1+e_2)/2$. \medskip

Does $\varphi_n$ have an asymptotic direction?  What can we say about this direction?  Can we describe $\varphi_n$ using Busemann functions, as we did for geodesics in the previous section? \medskip

By monotonicity \eqref{monotone} of the Busemann functions we have that
	\[B^\xi(0,e_1)-B^\xi(0,e_2)\]
is monotone in $\xi\in\riUset\cap\Q^2$. Namely, the above is nonincreasing as $\xi\cdot e_1$ increases in $(0,1)\cap\Q$. (The reason for only considering rational directions is that the limit in \eqref{B-limit} holds for configurations $\w$ outside 
a set of measure zero, but this null set depends on the direction $\xi$.  Thus, the limit can be claimed to hold almost surely for only countably many directions at once.)

By Theorem \ref{tie:thm} we have $B^\xi(0,e_1)\ne B^\xi(0,e_2)$ almost surely for all $\xi\in\riUset\cap\Q^2$.
If
	\[\P\big\{B^\xi(0,e_1)>B^\xi(0,e_2)\  \forall \xi\in\riUset\cap\Q^2\big\}>0,\] 
then, due to Theorem \ref{left}\eqref{left-ii}, for the configurations in the above event all infinite geodesics out $0$ must start with an $e_2$-step.
This can be contradicted by following an argument similar to the proof of Lemma \ref{exist-geo} to show that, with probability one, there exists at least one geodesic out of $0$ that takes a first step $e_1$.
A similar reasoning applies for the case where with positive probability $B^\xi(0,e_1)<B^\xi(0,e_2)$ for all $\xi\in\riUset$.

Then, with probability one there exists a unique $\cid\in\riUset$ such that for all $\xi\in\riUset\setminus\{\cid\}$
	\[B^\xi(0,e_1)>B^\xi(0,e_2)\text{ if }\xi\cdot e_1<\cid\cdot e_1\quad{and}\quad B^\xi(0,e_1)<B^\xi(0,e_2)\text{ if }\xi\cdot e_1>\cid\cdot e_1.\] 

One thing this says is that geodesics originating at $0$ that are directed into $\Uset_\xi$ with $\xi\cdot e_1<\cid\cdot e_1$ (i.e.\ $\xi$ to the left of $\cid$) must start with an $e_2$ step.
Similarly, geodesics originating at $0$ that are directed into $\Uset_\xi$ with $\xi\cdot e_1>\cid\cdot e_1$ (i.e.\ $\xi$ to the right of $\cid$) must start with an $e_1$ step. A slightly sharper version of this argument leads to the following.

\index{competition interface}%
\begin{theorem}\label{cif:thm}
{\rm\cite{Geo-Ras-Sep-17-ptrf-1}}
Make the same assumptions as in Theorem \ref{Bus:thm}.  Assume also that $\w_0$ has a continuous distribution.
\begin{enumerate}[\ \ {\rm(}i{\rm)}]
\item\label{cif-lln} With probability one, competition interface $\varphi_n$ has asymptotic direction $\cid$:
	\[\P\big\{\varphi_n/n\to\cid\big\}=1.\]
\item\label{2geo} With probability one, there exist two infinite geodesics out of $0$ with asymptotic direction $\cid$, one going through $e_1$ and the other through $e_2$:
	\[\P\big\{\exists x^1_{0,\infty},x^2_{0,\infty}\text{ geodesics }:x^1_1=e_1,\ x^2_1=e_2,\ x^1_n/n\to\cid,\ x^2_n/n\to\cid\big\}=1.\]
\item\label{cont-dist} $\cid$ is a genuine random variable that has a continuous distribution and is supported outside the linear segments of $g$ {\rm(}if any{\rm)}:
	\[\forall\xi\in\Uset:\quad\P\{\cid=\xi\}\le\P\{\cid\in\Uset_\xi\}=0.\]
\item\label{cid-supp} $\cid$ is supported on all of $\Uset$, take away the linear segments of $g$ {\rm(}if any{\rm)}: for any open interval (i.e.\ connected subset) $\mathcal V\subset\Uset$ such that $\Uset_\xi=\{\xi\}$ $\forall\xi\in\mathcal V$,
we have $\P\{\cid\in\mathcal V\}>0$.
\end{enumerate}
\end{theorem}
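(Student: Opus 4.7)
The plan is to trap $\varphi_n/n$ between two rational directions that converge to $\cid$. Fix rationals $\xi^-,\xi^+\in\riUset\cap\Q^2$ with $\xi^-\cdot e_1<\cid\cdot e_1<\xi^+\cdot e_1$. By the definition of $\cid$ and the recovery relation \eqref{recovery}, $B^{\xi^-}(0,e_2)<B^{\xi^-}(0,e_1)$ and $B^{\xi^+}(0,e_1)<B^{\xi^+}(0,e_2)$. Combined with Theorem \ref{Bus:thm}, this implies that for $n$ large enough the unique geodesic from $0$ to $\fl{n\xi^-}$ passes through $e_2$ (so $\fl{n\xi^-}$ sits in the $e_2$-cluster) while the one to $\fl{n\xi^+}$ passes through $e_1$. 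Since $\varphi_n$ is the dual path separating the two clusters, this forces $\varphi_n/n$ eventually into the strip between $\xi^-$ and $\xi^+$; letting $\xi^\pm\to\cid$ along rationals proves (i). For (ii), take rationals $\xi^+_k\searrow\cid$; each $x^{0,\xi^+_k,1}_{0,\infty}$ begins with step $e_1$, so the diagonal compactness procedure from Lemma \ref{exist-geo} extracts a subsequential limit, an infinite geodesic through $e_1$, and the squeeze from (i) pins its asymptotic direction at $\cid$. The symmetric construction with $\xi^-_k\nearrow\cid$ gives the geodesic through $e_2$.

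\textbf{Plan for (iii): atomless distribution and avoidance of flat segments.} I split into two cases according to whether $\Uset_\xi$ is nontrivial. If $\Uset_\xi$ is a flat segment of $g$ with more than one point, then by Theorem \ref{Bus:thm} we have $B^\zeta\equiv B^\xi$ for every rational $\zeta\in\Uset_\xi$, so the sign of $B^\zeta(0,e_1)-B^\zeta(0,e_2)$ is constant across $\Uset_\xi\cap\Q^2$. Since by construction $\cid$ is the unique direction where this monotone sign flips along $\riUset\cap\Q^2$, $\cid$ must sit strictly to one side of $\Uset_\xi$, and $\P\{\cid\in\Uset_\xi\}=0$. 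If $\Uset_\xi=\{\xi\}$, I argue $\P\{\cid=\xi\}=0$ by contradiction: on the event $\{\cid=\xi\}$ one has $B^{\zeta_n}(0,e_1)-B^{\zeta_n}(0,e_2)>0$ for every rational $\zeta_n\nearrow\xi$ and the reverse inequality for $\zeta_n\searrow\xi$. The almost-sure continuity \eqref{B-cont} at the fixed direction $\xi$ forces both sequences to converge to the common value $B^\xi(0,e_1)-B^\xi(0,e_2)$, which would therefore vanish on an event of positive probability, contradicting Theorem \ref{tie:thm}.

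\textbf{Plan for (iv) and the main obstacle.} Fix an open arc $\mathcal V\subset\Uset$ on which $g$ is strictly concave. By (iii) the CDF $F(t):=\P\{\cid\cdot e_1\le t\}$ is continuous, and the identity
\[
1-F(\xi\cdot e_1)=\P\{B^\xi(0,e_1)>B^\xi(0,e_2)\}
\]
together with the shape asymptotics of Theorem \ref{th:Martin} shows that $F(t)\to 0$ as $t\searrow 0$ and $F(t)\to 1$ as $t\nearrow 1$. Since by (iii) $F$ is constant on every flat segment of $g$, the missing mass must sit on the strict-concavity locus; to place positive mass in $\mathcal V$ I show $F$ is strictly increasing there. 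The key input is that as $\xi$ moves along $\mathcal V$, the mean $e_1\cdot\nabla g(\xi)=\E[B^\xi(0,e_1)]$ varies strictly (strict concavity of $g$), so the joint law of $(B^\xi(0,e_1),B^\xi(0,e_2))$ genuinely changes with $\xi$. Combined with the continuous dependence of the fixed-point measures $\nu_{\alpha(\xi)}$ on the parameter $\alpha(\xi)$ from Section \ref{fixed-pt}, this rules out $F$ being flat on $\mathcal V$. The main obstacle is precisely this last step in (iv): converting strict concavity of $g$ into strict monotonicity of $\xi\mapsto\P\{B^\xi(0,e_1)>B^\xi(0,e_2)\}$ without an explicit formula for the Busemann distribution, which in the exponential case reduces to the transparent identity $\sqrt{\xi_1}/(\sqrt{\xi_1}+\sqrt{\xi_2})$ but in general must be extracted from the queueing construction.
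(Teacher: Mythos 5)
Your plan for (i)--(iii) is essentially the route the paper itself sketches around the theorem: define $\cid$ as the flip point of the monotone sign of $B^\xi(0,e_1)-B^\xi(0,e_2)$ over rational directions, trap $\varphi_n/n$ (and the two geodesics through $e_1$ and $e_2$) between rational directions on either side of $\cid$, and kill atoms at a fixed deterministic $\xi$ by combining the continuity \eqref{B-cont} at that fixed direction with Theorem \ref{tie:thm}. Those parts are sound, with two small caveats. In (ii), ``the squeeze from (i)'' alone does not pin the direction of the subsequential limit geodesic; you also need an ordering input (monotonicity \eqref{monotone} or Lemma \ref{order}: geodesics built from directions further to the right stay weakly to the right), which is available in the paper. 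In (iii), your first case only excludes $\cid$ from the relative interior of a nontrivial face $\Uset_\xi$: the constant sign over rational directions inside the face still allows $\cid$ to sit at an endpoint of the face, and the endpoints belong to $\Uset_\xi$. This is easily patched because your second-case argument never uses $\Uset_\xi=\{\xi\}$, so it applies verbatim at the (deterministic) endpoints. (The paper's own hint for (iii) is slightly different: an atom at a deterministic $\xi$ would, via (ii), produce two non-coalescing $\xi$-directed geodesics with positive probability, contradicting Theorems \ref{coal:thm} and \ref{unique:thm}.)

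The genuine gap is in (iv), exactly where you flagged it, and the proposed bridge does not close it. Knowing that the joint law of $(B^\xi(0,e_1),B^\xi(0,e_2))$ ``genuinely changes'' with $\xi$ (its means $\nabla g(\xi)$ change) does not imply that the particular functional $\P\{B^\xi(0,e_1)>B^\xi(0,e_2)\}=1-F(\xi\cdot e_1)$ changes: distinct joint laws can assign the same mass to that event, and continuity of $\alpha\mapsto\nu_\alpha$ gives no strict monotonicity. Similarly, $F(t)\to0,1$ at the endpoints does not follow from Theorem \ref{th:Martin} alone, since divergence or convergence of the means $\E[B^\xi(0,e_i)]$ does not by itself control the probability of the comparison event (one needs, e.g., the stochastic monotonicity \eqref{monotone} plus a truncation argument). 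A way to close (iv) with the tools already in the paper, and in the spirit of the cited source, is to use stationarity across all vertices rather than the law at a single vertex: if $\P\{\cid\in\mathcal V\}=0$, let $\cid(v)$ be the flip direction of $\xi\mapsto\sgn\bigl(B^\xi(v,v+e_1)-B^\xi(v,v+e_2)\bigr)$ at a vertex $v$; by \eqref{shift} and (iii), almost surely $\cid(v)\notin\mathcal V$ and $\cid(v)\ne\zeta,\eta$ for every $v$, where $\zeta,\eta\in\mathcal V\cap\Q^2$ are fixed with $\zeta\cdot e_1<\eta\cdot e_1$. Since $\mathcal V$ is an interval, at every vertex $\cid(v)$ lies strictly to one side of both $\zeta$ and $\eta$, so by recovery the $\zeta$- and $\eta$-Busemann geodesics take the same step at every vertex they share; starting both at $0$ they therefore coincide as paths, contradicting Theorem \ref{direction}, which directs them into the disjoint sets $\Uset_\zeta=\{\zeta\}$ and $\Uset_\eta=\{\eta\}$. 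Some argument of this global type is needed; the perturbation-of-queueing-fixed-points idea as stated does not yield (iv).
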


It is conjectured that $g$ is strictly concave when weights $\w_x$ have a continuous distribution. Hence, $\cid$ is expected to be supported on all of $\Uset$.

Since weights are assumed to be continuous, the two geodesics in Theorem \ref{cif:thm}\eqref{2geo} cannot coalesce. This does not contradict Theorem \ref{coal:thm} because direction $\cid$ is random while the coalescence result
was about fixed deterministic directions. In fact, this is one way to see why claim
\eqref{cont-dist} is true.\medskip

In the solvable case of exponentially distributed weights, one can compute the distribution of $\cid$ explicitly. 

\begin{lemma}
Assume $\w_0$ is exponentially distributed with rate $\theta>0$. Then for $a\in(0,1)$
	\begin{align}\label{cid-dist}
	\P\{\cid\cdot e_1>a\}=\frac{\sqrt{1-a}}{\sqrt{a}+\sqrt{1-a}}\,.
	\end{align}
If we define the angle $\cid$ makes with $e_1$ by $\theta^*=\tan^{-1}(\cid\cdot e_2/\cid\cdot e_1)$, then 
	\begin{align}\label{theta-dist}
	\P\{\theta^*\le t\}=\frac{\sqrt{\sin t}}{\sqrt{\sin t}+\sqrt{\cos t}}\,.
	\end{align}
\end{lemma}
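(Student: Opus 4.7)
The plan is to identify $\{\xi^*\cdot e_1>a\}$ with an explicit event involving the two Busemann increments $B^\xi(0,e_1)$ and $B^\xi(0,e_2)$ at the direction $\xi=(a,1-a)$, and then exploit the fact that in the exponential case these two increments are independent exponentials with known rates.

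First I would fix $a\in(0,1)$ and set $\xi=(a,1-a)\in\riUset$. By the characterization of $\xi^*$ stated just before Theorem \ref{cif:thm} (and using Theorem \ref{tie:thm} to discard the event $B^\xi(0,e_1)=B^\xi(0,e_2)$, which has zero probability since $\w_0$ has a continuous distribution), the events $\{\xi^*\cdot e_1>a\}$ and $\{B^\xi(0,e_1)>B^\xi(0,e_2)\}$ coincide up to a null set. Moreover, by Theorem \ref{cif:thm}\eqref{cont-dist} the random variable $\xi^*\cdot e_1$ has a continuous distribution, so boundary issues at $a$ itself are irrelevant.

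Next I would invoke the explicit distributional facts quoted at the end of Section \ref{fixed-pt}: in the exponential case with $m_0=\theta^{-1}$, the random variables $B^\xi(0,e_1)$ and $B^\xi(0,e_2)$ are independent and exponentially distributed with rates
\begin{equation*}
\lambda_1=\frac{\sqrt{a}}{m_0(\sqrt{a}+\sqrt{1-a})},\qquad \lambda_2=\frac{\sqrt{1-a}}{m_0(\sqrt{a}+\sqrt{1-a})}.
\end{equation*}
For independent exponentials $X\sim\mathrm{Exp}(\lambda_1)$ and $Y\sim\mathrm{Exp}(\lambda_2)$ one has $\P(X>Y)=\lambda_2/(\lambda_1+\lambda_2)$. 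Note that $\lambda_1+\lambda_2=1/m_0$, so
\begin{equation*}
\P\{\xi^*\cdot e_1>a\}=\P\{B^\xi(0,e_1)>B^\xi(0,e_2)\}=\frac{\lambda_2}{\lambda_1+\lambda_2}=m_0\lambda_2=\frac{\sqrt{1-a}}{\sqrt{a}+\sqrt{1-a}},
\end{equation*}
which is \eqref{cid-dist}.

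Finally, \eqref{theta-dist} is obtained by a change of variables. Writing $\xi^*=(\xi^*\cdot e_1,1-\xi^*\cdot e_1)$, the angle satisfies $\tan\theta^*=(1-\xi^*\cdot e_1)/(\xi^*\cdot e_1)$, so $\{\theta^*\le t\}=\{\xi^*\cdot e_1\ge \cos t/(\sin t+\cos t)\}$ for $t\in(0,\pi/2)$. Plugging $a=\cos t/(\sin t+\cos t)$ into \eqref{cid-dist} yields $\sqrt{1-a}=\sqrt{\sin t}/\sqrt{\sin t+\cos t}$ and $\sqrt{a}=\sqrt{\cos t}/\sqrt{\sin t+\cos t}$, giving $\P(\theta^*\le t)=\sqrt{\sin t}/(\sqrt{\sin t}+\sqrt{\cos t})$ as claimed. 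There is really no hard step here: everything is a clean consequence of the characterization of $\xi^*$ via Busemann increments and the exact distribution of those increments in the solvable case; the only care required is to ensure that Theorem \ref{tie:thm} and Theorem \ref{cif:thm}\eqref{cont-dist} are used to rule out the measure-zero ambiguity at $\{\xi^*\cdot e_1=a\}$.
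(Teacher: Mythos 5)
Your proposal is correct and follows essentially the same route as the paper: identify $\{\cid\cdot e_1>a\}$ with $\{B^{\xi}(0,e_1)>B^{\xi}(0,e_2)\}$ for $\xi=(a,1-a)$, use the independence and explicit exponential rates of the Busemann increments to get $\lambda_2/(\lambda_1+\lambda_2)$, and then change variables for the angle. Your extra care in invoking Theorem \ref{tie:thm} and the continuity of the law of $\cid$ to dismiss the null boundary events is a harmless refinement of what the paper leaves implicit.
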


\begin{proof}
Recall that $B^\xi(0,e_i)$ has an exponential distribution with rate 
	\[\lambda_i=\frac{\theta\sqrt{\xi\cdot e_i}}{\sqrt{\xi\cdot e_1}+\sqrt{\xi\cdot e_2}}.\]
Furthermore, $B^\xi(0,e_1)$ and $B^\xi(0,e_2)$ are independent. See Section \ref{fixed-pt} below. Now compute
	\begin{align*}
	\P\{\cid\cdot e_1>a\}
	&=\P\{B^{ae_1+(1-a)e_2}(0,e_1)>B^{ae_1+(1-a)e_2}(0,e_2)\}\\
	&=\int_0^\infty \lambda_2 e^{-\lambda_2 s} \P\{B^{ae_1+(1-a)e_2}(0,e_1)>s\}\,ds\\
	&=\int_0^\infty \lambda_2 e^{-\lambda_2 s} e^{-\lambda_1 s}\,ds=\frac{\lambda_2}{\lambda_1+\lambda_2}\,,
	\end{align*}
from which \eqref{cid-dist} follows. For the distribution of $\theta^*$ we have for $t\in(0,\pi/2)$
	\[\P\{\theta^*\le t\}=\P\{\cid\cdot e_2\le \cid\cdot e_1\tan t\}=\P\{\cid\cdot e_1\ge 1/(1+\tan t)\}=\frac{\sqrt{\tan t}}{1+\sqrt{\tan t}}\,,\]
which is \eqref{theta-dist}.
\end{proof}

The competition interface of the 
\index{corner growth model (CGM)!exponential}%
\index{CGM!exponential}%
exponential corner growth model maps to a certain object called the 
\index{second-class particle}%
{\sl second-class particle} in TASEP, so this object has been studied from both perspectives.  
In this case, a weak-limit version of Theorem \ref{cif:thm}\eqref{cif-lln} follows from translating a result of Ferrari and Kipnis  \cite{Fer-Kip-95} on the limit of the scaled location of the second-class particle in TASEP to LPP language.
Almost sure convergence was shown by Mountford and Guiol \cite{Mou-Gui-05} using  concentration inequalities and the TASEP variational formula of Sepp\"al\"ainen \cite{Sep-99-aop}.  
Concurrently, \cite{Fer-Pim-05}  gave a different proof of almost sure convergence of $\phi_n/n$ by applying the techniques of directed geodesics and then obtained the distribution \eqref{theta-dist} of the angle of the asymptotic direction $\cid$ 
from  the TASEP results of \cite{Fer-Kip-95}.  

Later, these results on the direction of the competition interface were extended from the quadrant  to larger classes of initial profiles in two rounds: first by 
\cite{Fer-Mar-Pim-09} still with TASEP and geodesic techniques,  and then by 
\cite{Cat-Pim-13}  using their earlier  results  on Busemann functions \cite{Cat-Pim-12}.  

Coupier \cite{Cou-11}  also relied on the TASEP connection   to sharpen the geodesics  results of \cite{Fer-Pim-05}.  
He showed that with probability one there are no triple geodesics (out of the origin) in any direction.

\section{History}

We now give a quick overview of the last twenty or so years of research  on Busemann functions and geodesics in percolation.  

As we mentioned above, Licea and 
\index{Newman, Charles}%
Newman  \cite{Lic-New-96} were the first to introduce a technique for 
proving existence, uniqueness, and coalescence of directional geodesics under a global curvature assumption on the limit shape to control how much geodesics deviate from a straight line, and then
as a consequence deducing the existence  of Busemann functions. See also the summary in Newman's ICM paper \cite{New-95}.
Although verifying the curvature assumption for percolation models with general weights remains an open problem,  it can be done in a number of special cases.
Thus, Licea and Newman's approach was applied to directed LPP with exponential weights \cite{Fer-Pim-05} 
and to several other (non-lattice) models built on homogeneous Poisson processes \cite{How-New-01,Wut-02,Cat-Pim-11,Bak-Cat-Kha-14}.   
In the case of the 
\index{corner growth model (CGM)!exponential}%
\index{CGM!exponential}%
exponential corner growth model,  another set of tools comes from its connection with TASEP, as explained at the end of Section \ref{cif:sec}.


The idea of deducing existence and uniqueness of stationary processes by studying  geodesic-like objects has also been used in random dynamical systems.
For example, this is how \cite{E-etal-00} and its extensions \cite{Hoa-Kha-03,Itu-Kha-03,Bak-07,Bak-Kha-10,Bak-13} show existence of invariant measures for the Burgers equation with random forcing. 
These works treated cases where space is compact or essentially compact.  To make progress in a non-compact case,  the approach of Newman et al.\ was adopted again in \cite{Bak-Cat-Kha-14,Bak-16,Bak-Li-16-}. 

The approach we presented in this chapter  is the one we took in \cite{Geo-Ras-Sep-17-ptrf-1,Geo-Ras-Sep-17-ptrf-2} and is the very opposite of the above.    
Using the connection to queues in tandem the Busemann limits are constructed a priori in the form of stationary cocycles
that come from certain 
\index{invariant measure}%
{\sl invariant} measures of the queuing system.  Using a certain monotonicity the cocycles are then compared to the gradients of passage times.    
The monotonicity,  ergodicity,  and differentiability (rather than curvature) of the limit shape  give the control that proves the Busemann limits.   After establishing existence of Busemann functions,
we use them to prove existence, directedness, coalescence, and uniqueness results about the geodesics.

A similar approach was carried out by Damron and Hanson \cite{Dam-Han-14,Dam-Han-17} for the standard first-passage percolation model.
They first construct (generalized) Busemann functions from  weak subsequential limits of first-passage time differences.  
These  weak Busemann  limits  can  be regarded as a  counterpart of our stationary cocycles. 
This then gives  access to  properties of geodesics,   while weakening  the need for the global curvature assumption.  

 An   independent line of work is that of  Hoffman \cite{Hof-05,Hof-08} on the standard first-passage percolation,  with  general  weights and without 
  any regularity assumptions on the limit shape.  Assuming all semi-infinite geodesics coalesce, \cite{Hof-05} constructed a Busemann function and used it to get a contradiction, concluding that 
  there are at least two semi-infinite geodesics.   (\cite{Gar-Mar-05} gave  an independent proof with a different method.)   \cite{Hof-08} extended this  to at least four geodesics.

\section{Next: fluctuations}

The results we presented can be thought of as analogues of the law of large numbers. A natural follow-up is to study the analogue of the central limit theorem, i.e.\ questions concerning the size of deviations of the passage times
$G_{0,\fl{n\xi}}$ from their asymptotic limit  $n g(\xi)$ and of the geodesics, both finite (i.e.\ from $0$ to $\fl{n\xi}$) and infinite (i.e.\ going in direction $\xi$), from a straight line. 

Due to the maximum in the definition of the passage times, $G_{0,\fl{n\xi}}-ng(\xi)$ should be tighter than in the case of just a sum of identically distributed independent random variables. That is,
its fluctuations should be smaller than order $n^{1/2}$. On the other hand, one would expect the geodesic paths to wander a great deal in search of favorable weights. For example,
if $x_{0,\infty}$ is a path that follows the smallest $B^\xi$ gradient, then it should be the case that $x_n-n\xi$ has fluctuations of order larger than $n^{1/2}$.   

We will see in articles \cite{ch:Seppalainen} and \cite{ch:Corwin} how the above can be answered quite precisely for the solvable models:  $G_{0,\fl{n\xi}}-ng(\xi)$ fluctuates on order $n^{1/3}$ (and we can even determine the limiting 
\index{Tracy-Widom distribution}%
{\sl Tracy-Widom distribution} of 
$n^{-1/3}(G_{0,\fl{n\xi}}-ng(\xi))$ and $x_n-n\xi$ fluctuates on the order $n^{2/3}$ (but here, a limiting distribution of $(x_n-n\xi)/n^{2/3}$ is only conjectured).


Just like it is the case for the central limit theorem, this behavior is believed to be universal, going beyond solvable models, but this is far from being proved. See, however, \cite{Alb-Kha-Qua-14-aop} and \cite{Kri-Qua-16-} for results 
towards this universality conjecture.


\bibliographystyle{amsplain}

\bibliography{Rassoul-Agha}

\end{document}